
\documentclass[final,leqno,onefignum,onetabnum]{siamltex}

\usepackage{amsmath,amssymb,euscript ,yfonts,psfrag,latexsym,dsfont,graphicx,bbm,color,amstext,wasysym,epsfig,subfig,parskip,textcomp}
\usepackage[normalem]{ulem}

\usepackage{bm,subfig,color}
\usepackage{algorithmic}
\usepackage{algorithm}
\usepackage{graphics}

\usepackage{amsfonts,url}

\usepackage[normalem]{ulem}
\usepackage{mathtools}

\DeclareGraphicsRule{.tif}{png}{.png}{`convert #1 `basename #1 .tif`.png}

\usepackage{url}

\graphicspath{{./},{./arXiv/}}

\newtheorem{remark}[theorem]{Remark}
\newtheorem{problem}[theorem]{Problem}
\newtheorem{example}[theorem]{Example}

\def\R{\mathbb{R}}
\def\C{\mathbb{C}}

\def\D{\mathbb{D}}

\newcommand{\cS}{{\mathcal S}}
\newcommand{\cC}{{\mathcal C}}
\newcommand{\cD}{{\mathcal D}}
\newcommand{\cK}{{\mathcal K}}
\newcommand{\cE}{{\mathcal E}}

\newcommand{\cP}{{\mathcal P}}
\newcommand{\cT}{{\mathcal T}}

\def\exp{{\rm exp}\,}

\newcommand{\eq}{\begin{equation}}
\newcommand{\eeq}{\end{equation}}
\newcommand{\eqn}{\begin{eqnarray}}
\newcommand{\eeqn}{\end{eqnarray}}

\newcommand{\De}{\mathrm{d}}

\newcommand{\qed}{\hfill $\Box$ \vskip 2ex}

\def\bmat{\left[ \begin{array}}
\def\emat{\end{array} \right]}

\newcommand{\fM}{{\mathfrak M}}
\newcommand{\fP}{{{\mathfrak P}}}
\newcommand{\support}{{\it Supp}}

\newcommand{\bsea}{\begin{subeqnarray}}
\newcommand{\esea}{\end{subeqnarray}}

\def\bmat{\left[ \begin{array}}
\def\emat{\end{array} \right]}

\newcounter{acount}

\definecolor{Royalblue}{cmyk}{1,0.30,0.2,0.2}

\newfont{\BB}{msbm10 scaled\magstep1}
\def\E{\mbox{\BB E}}



\def\spacingset#1{\def\baselinestretch{#1}\small\normalsize}

\spacingset{1}

\setlength{\parskip}{10pt}
\setlength{\parindent}{20pt}

\title{Stochastic control liaisons:\\ Richard Sinkhorn meets Gaspard Monge\\ on  a Schr\"{o}dinger bridge  \thanks{Supported in part by the
NSF under grant 1807664, 1839441, 1901599, 1942523, the AFOSR under grants FA9550-17-1-0435,} and by the University of Padova Research Project CPDA 140897.}

\author{Yongxin Chen \thanks{School of Aerospace Engineering, Georgia Institute of Technology, Atlanta, GA 30332, USA. ({\tt yongchen@gatech.edu}).}
        \and Tryphon T. Georgiou \thanks{Department of Mechanical and Aerospace Engineering,
University of California, Irvine, CA 92697, USA. ({\tt  tryphon@uci.edu})}
         \and Michele Pavon \thanks{Dipartimento di Matematica ``Tullio Levi-Civita",
Universit\`a di Padova, via Trieste 63, 35121 Padova, Italy. ({\tt  pavon@math.unipd.it})}}

\begin{document}

\maketitle

\begin{abstract}
In 1931/32, Erwin Schr\"odinger studied a {\em hot gas Gedankenexperiment} (an instance of large deviations of the empirical distribution). Schr\"odinger's problem represents an early example of a fundamental {\em inference method}, the so-called maximum entropy method, having roots in Boltzmann's work and developed in subsequent years by Jaynes, Burg, Dempster and Csisz\'{a}r. The problem, known as the {\em Schr\"odinger bridge problem} (SBP) with ``uniform" prior, was more recently recognized as a regularization of the Monge-Kantorovich {\em Optimal Mass Transport} (OMT) problem, leading to effective computational schemes for the latter.   Specifically, OMT with quadratic cost may be viewed as a zero-temperature limit of the problem posed by Schr\"odinger in the early 1930s.
The latter amounts to minimization of the  {\em Helmholtz's free energy} over probability distributions that are constrained to possess two given marginals. The problem features a delicate compromise, mediated by a temperature parameter, between minimizing the internal energy and maximizing the entropy. These concepts are central to a rapidly expanding area of modern science dealing with
the so-called {\em Sinkhorn algorithm} which appear as a special case of an algorithm first studied in the more challenging continuous-space setting by the French analyst Robert Fortet in 1938/40 specifically for Schr\"{o}dinger bridges. Due to the constraint on end-point distributions, dynamic programming is not a suitable tool to attack these problems. Instead, Fortet's iterative algorithm and its discrete counterpart, the Sinkhorn iteration, permit computation of the optimal solution by iteratively solving the so-called {\em Schr\"{o}dinger system}. Convergence of the iteration is guaranteed by contraction along the steps in suitable metrics, such as Hilbert's projective metric. 

In both the continuous as well as the discrete-time and space settings, {\em stochastic control} provides a reformulation and a context for the dynamic versions of general Schr\"odinger bridges problems and of its zero-temperature limit, the OMT problem. These problems, in turn, naturally lead to {\em steering problems for flows of one-time marginals} which represent a new paradigm for {\em controlling uncertainty}. The zero-temperature problem in the continuous-time and space setting turns out to be  the celebrated Benamou-Brenier characterization of the {\em McCann displacement interpolation} flow in OMT.
The formalism and techniques behind these control problems on flows of probability distributions have attracted significant attention in recent years as they lead to a variety of new applications in spacecraft guidance, control of robot or biological swarms, sensing, active cooling, network routing as well as in computer and data science.
This multifacet and versatile framework, intertwining SBP and OMT, provides the substrate for a historical and technical overview of the field taken up in this paper. A key motivation has been to highlight links between the classical early work in both topics and the more recent stochastic control viewpoint, that naturally lends itself to efficient computational schemes and interesting generalizations.
\end{abstract}

\begin{keywords} 
Optimal mass transport, Schr\"{o}dinger bridge, Sinkhorn's algorithm, stochastic control.
\end{keywords}
\begin{AMS}
49Lxx, 60J60, 49J20, 35Q35, 28A50
\end{AMS}

\tableofcontents

\pagestyle{myheadings}
\thispagestyle{plain}
\markboth{YONGXIN CHEN, TRYPHON GEORGIOU and MICHELE PAVON}{RICHARD SINKHORN  MEETS GASPAR MONGE}

\section{Prelude: Sinkhorn's algorithm}

In 1962, Richard Sinkhorn completed his doctoral dissertation entitled ``On Two Problems Concerning Doubly Stochastic Matrices" and submitted \cite{Sin64} which would appear in {\em The Annals of Mathematical Statistics} in 1964. He showed there that the iterative process of alternatively normalizing the rows and columns of a matrix $A$ with striclty positive elements converges to a doubly stochastic matrix $D_1AD_2$. Here $D_1$ and $D_2$ are diagonal matrices with positive diagonal elements which are unique up to a scalar factor. By 1970, a survey by Fienberg had appeared \cite{Fien}, where many other contributions to this topic, including \cite{Sin67,SinKno67,IK}, were cited. Differently from Sinkhorn,  Ireland and Kullback \cite{IK}, who studied a more general problem than Sinkhorn,  credited Deming and Stephan \cite{DS1940} (1940) for first introducing the {\em iterative proportional fitting} (IPF)  procedure\footnote{Even earlier contributions are \cite{Yu,Ku}.}. Several other significant contributions on this problem followed including generalizations to multidimensional matrices  such as \cite{Bap,FL} and this line of research continues to this date, see e.g. \cite{Cuturi,Alt,TCDP,PC,BHDJ,DiMG}. This large body of literature often ignores two papers by Erwin Schr\"{o}dinger from the early 1930's \cite{S1,S2} as well as important contributions by Robert Fortet (1938/40) \cite{For0,For} and Arne Beurling (1960) \cite{Beu}. The same goes for some later contributions on the {\em Schr\"{o}dinger system} \cite{Jam2,F2}.
But why should this bulk of work on Sinkhorn algorithms  be related to Schr\"{o}dinger's quest for the most likely random evolution between two given marginals for a cloud of diffusive particles?


One of the main goals of this paper is to provide an exhaustive answer to this question.
To do that, we shall have to address several other questions such as: What is the relation between Schr\"{o}dinger's problem and the seventeen-hundred's Monge ``M\'emoire sur la th\'eorie des d\'eblais et des
remblais" (1781) \cite{Monge}? Was Schr\"{o}dinger interested in {\em regularizing} the Optimal Mass Transport (OMT) problem as some recent papers seem to hint \cite{BCCNP,CPSV,LYO}? And how does the latter regularization relate to von Helmholz's {\em free energy} of thermodynamics \cite{Hel}? Is there a relation to Boltzmann's maximum entropy problem (1877) \cite{BOL}? Is there a connection to multiplicative functional transformations of Markov processes \cite{IW}, or to Bernstein's reciprocal processes \cite{Ber, Jam1}? And what about connections to the Fisher information functional \cite{Vil} and to positive maps on cones contracting Hilbert's projective metric \cite{birkhoff1957extensions,bushell1973hilbert}? Lastly and surprisingly, but definitely not least, how is this scientific exploration intertwined with Democritus atomic hypothesis?

As it turns out, all of these topics are indeed tightly connected. Therefore, what we would like to discuss in this paper lies right at the crossroad of major areas of science, some still in rapid development. Clearly, given the overwhelming spectrum of ideas and concepts, attempting to sort this all out may result  in a fuzzy picture. Given our limited competence, how can we ever hope to give here at least a reasonable/interesting account of all these intersecting areas (sutor, ne ultra crepidam!)? Our choice is to discuss this science junction from an angle which is not the most common in the literature, namely {\em stochastic control}. We shall try to provide evidence that, once Schr\"odinger's problem has been converted to a stochastic control problem, it lends itself naturally to computational schemes as well as to interesting generalizations. It leads naturally to a {\em steering problem for probability distributions}, namely a relaxed version of a most central problem in deterministic optimal control \cite{FR}.

Our tale is apparently going to be a long and complex one with permanent danger of too much branching out and continuous flashbacks. The only way to avoid total chaos seems to us is starting with Erwin Schr\"{o}dinger's drama in the early 1930's.   
\section{Overture: Science dramas}
We briefly recall two famous dramas in the history of science.
\subsection{Schr\"{o}dinger's drama}
Edward Nelson
concludes his jewel 1967 book \cite{N1} with a 1926 quotation from Erwin Schr\"{o}dinger \cite[Paragraph 14]{S0}: ``... It has even been doubted whether what goes on in the atom could ever be described within the scheme of space and time. From the philosophical standpoint, I would consider a conclusive decision in this sense as equivalent to a complete surrender. For we cannot really alter our manner of thinking in space and time, and what we cannot comprehend within it we cannot understand at all."

Several
years later, in 1953, Schr\"{o}dinger wrote \cite{S3}: ``For it must have given to de Broglie the same shock and disappointment as it gave to me, when we learnt that a sort of transcendental, almost psychical interpretation of the wave phenomenon had been put forward, which was very soon hailed by the majority of leading theorists as the only one reconcilable with experiments, and which has now become the orthodox creed, accepted by almost everybody, with a few notable exceptions."\footnote{
According to Nelson,
 ``
 a realistic interpretation of quantum mechanics is, in my view, as unresolved as it was in the 1920s.", \cite[p.230]{N2}.}

Between these two dramatic statements, however, there is a time in the early thirties when Schr\"{o}dinger has hope again. It is when he introduces what we now call the {\em Schr\"{o}dinger bridges} in two remarkable papers \cite{S1,S2}. He states: ``Merkw\"{u}rdige Analogien zur Quantenmechanik, die mir sehr des Hindenkens wert erscheinen"\footnote{``Remarkable analogies to quantum mechanics which appear to me very worth of reflection".}. In this respect, the title of \cite{S1} is revealing: ``\"{U}ber die Umkehrung der Naturgesetze", namely ``On the reversal of natural laws".  A few years later in 1937, another scientific giant, Andrey Kolmogorov, publishes a paper \cite{Kol} with a very similar title ``Zur Umkehrbarkeit der statistischen Naturgesetze", namely ``On the reversibility of statistical natural laws". What had Schr\"{o}dinger glimpsed? Before we discuss this, let's go back some 120 years to another dramatic moment in the history of science.
 
\subsection{Fourier's drama} On December 21st, 1807, Joseph B.\ Fourier submits a manuscript to the Institute of France in Paris entitled ``Sur la propagation de la chaleur". He had been working on this memoir during his stay in Grenoble as Prefect of the Department of Is\'ere, a post to which he had been appointed by Napoleon himself. The surprising, but not fully justified, results provoke an animated discussion among the examiners. The committee consists of Lagrange, Laplace, Lacroix and  Monge. Lagrange and Laplace, who criticize Fourier's expansion  of functions as trigonometrical series, and Monge had all been teachers of Fourier at the \'Ecole Normale.  Here is how Fourier describes Monge \cite{Fourier0}: ``Having a loud voice and is active, ingenious and very learned." In 1785, while teaching at the \'Ecole Militaire in Paris, Monge, the father of {\em descriptive geometry}, see Figure \ref{DG},
\begin{figure}\begin{center}
{\includegraphics[height=.7\textwidth]{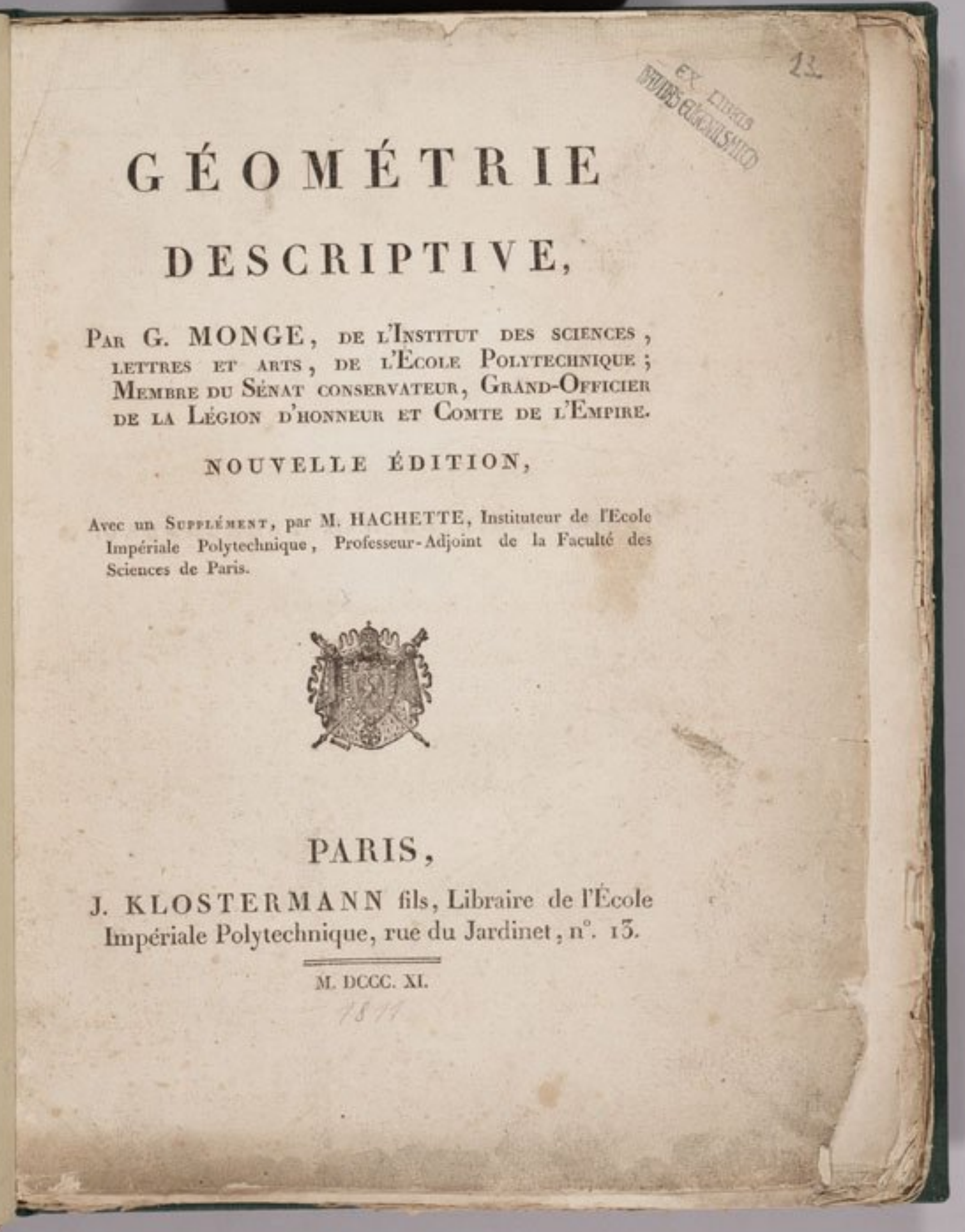}}
\caption{G. Monge, G\'eom\'etrie Descriptive}
\label{DG}
\end{center}\end{figure}
has among his students a sixteen years old Corsican gifted for mathematics by the name of Napoleone Bonaparte. The latter will be examined upon graduation by Pierre-Simon de Laplace. 
Of the group of outstanding mathematicians active in France in that time, Fourier and Monge are those who take on important political positions. Monge is Minister of the Marine during the French revolution.  In between two assignements in Italy, he directs the \'Ecole Polytechnique which he had co-established in 1794 with Lazare Carnot and Napoleon. Fourier and Monge had also been together in the expedition to Egypt in 1798 and members of the mathematics division of the Cairo Institute together with Malus and Napoleon himself. In spite of all of this,  the manuscript is eventually rejected (Fourier's ``Th\'eorie analytique de la chaleur" will appear only in 1822).\footnote{In 1826, Fourier announced a method for the solution of systems of
linear inequalities \cite{Fourier} which has elements in common with the simplex method of Linear Programming.}

Let us zoom in on Gaspard Monge, Comte de P\'eluse. A first version (unfortunately lost) of his ``M\'emoire sur la th\'eorie des d\'eblais et des remblais" is read at the Acad\'emie des sciences on January 27 and February 7, 1776. Although the memoir is proposed for publication by the secretary Condorcet, only on March 28, 1781 does Monge read a second version of his memoir, cf.\ Figure \ref{fig:Imagemarginals}.
\begin{figure}\begin{center}
{\includegraphics[height=.6 \textwidth,width=.6\textwidth]{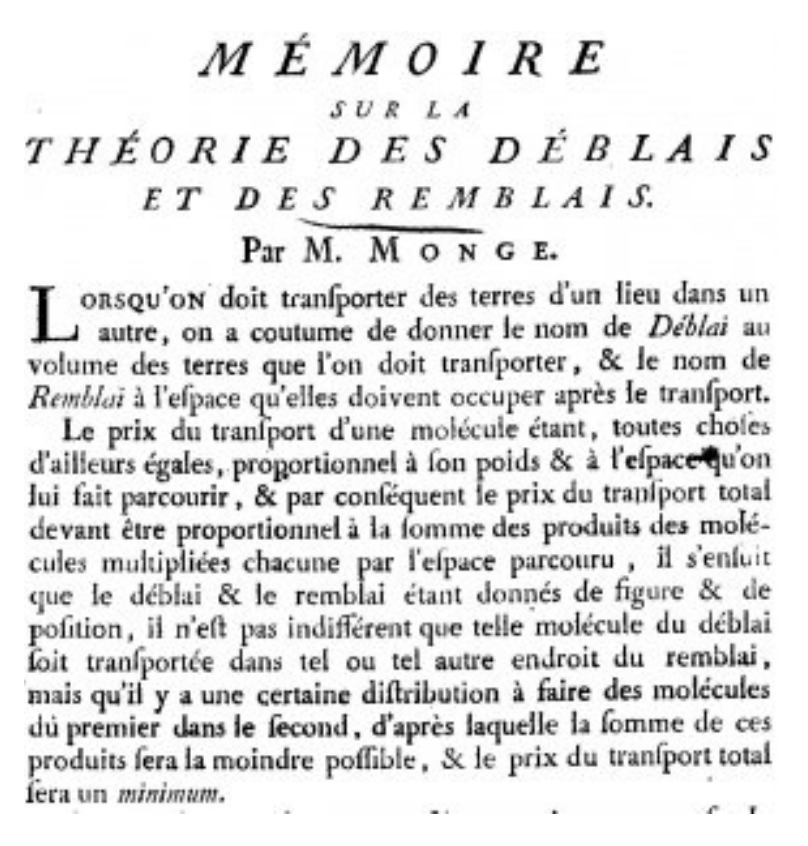}}\\
\includegraphics[height=0.4\textwidth,width=0.7\textwidth]{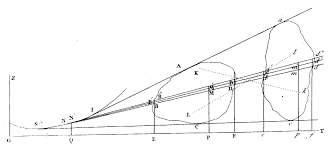}
\caption{Front page of Monge's memoire and Monge's drawing}
\label{fig:Imagemarginals}
\end{center}\end{figure}
A forty page publication follows in 1784. Monge, who is of rather humble origin, had worked for military institutions for a while taking advantage of his extraordinary drawing skills and geometric intuition, see below.
\begin{figure}\begin{center}
{\includegraphics[height=0.5 \textwidth,width=.7\textwidth]{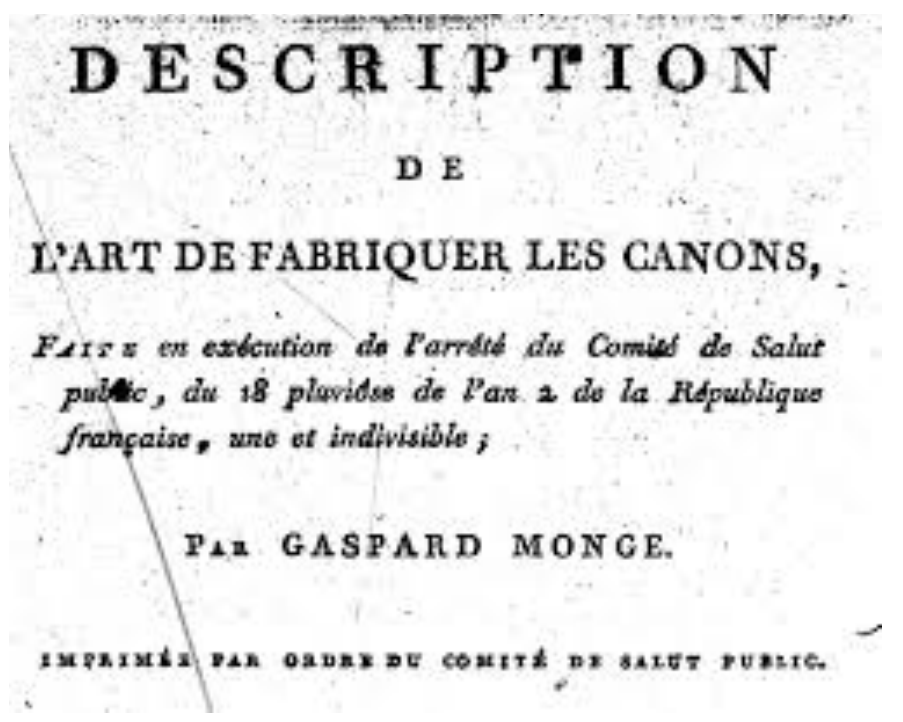}}
\caption{G. Monge, L'Art de fabriques les Canons}
\label{cannons}
\end{center}\end{figure}
Monge had applied descriptive geometry to such problem as designing cannons (Figure \ref{cannons}), cutting stones, planning city walls and drawing shadows.
But now 
he is interested in the following problem which has both civil and military applications: Suppose you need to build some embankments carrying debris from another location: How should the transport occur so that the average distance is minimized? Monge discusses two and three-dimensional problems showing a profound understanding of  the problem and of its challenging aspects. In particular, his intuition of normality of optimal transport paths to a certain one-parameter family of surfaces was 
proven to be correct more than one hundred years later in a 200-page memoire by Appel \cite{appel}. 
\subsection{Kantorovich}
Not much happens until the 1920's and 1930's when the transportation problem is first studied mathematically by A.N. Tolstoi \cite{Tol1,Tol2}. Then, in 1939,  the transport problem is briefly mentioned by the Soviet mathematician Leonid Kantorovich in the booklet  \cite{Kan1} where he lays the foundations of linear programming including duality theory and a variant of the simplex method.  In 1942, Kantorovich provided in \cite{Kan2} the generalization of linear programming to an abstract setting. He considers the following problem: Let $X$ be compact metric space with distance function $d$. Let $\nu_0$ and $\nu_1$ be probability measures on $X$ and let $\Pi(\nu_0,\nu_1)$ denote all the probability distributions on $X\times X$ having $\nu_0$ and $\nu_1$ as marginal distributions. Consider the problem
\begin{equation}\label{problemkantorovich}
K(\nu_0,\nu_1)=\inf_{\pi\in\Pi(\nu_0,\nu_1)}\int_{X\times X} d(x,y)d\pi(x,y).
\end{equation}
Kantorovich establishes the following fundamental duality theorem (see also Theorem \ref{duality} for the case of a non-distance cost function):
\begin{theorem}
\begin{equation}
K(\nu_0,\nu_1)=\sup_{\varphi}\left\{\int \varphi(x)d\left(\nu_0-\nu_1\right);\|\varphi\|_{{\rm Lip}}\le 1,\; \varphi\in L^1(|\nu_0-\nu_1|)\right\}.
\end{equation}
\end{theorem}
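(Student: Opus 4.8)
The plan is to establish the two inequalities $\sup_\varphi\{\cdots\}\le K(\nu_0,\nu_1)$ and $K(\nu_0,\nu_1)\le\sup_\varphi\{\cdots\}$ separately, the first being routine and the second carrying the real content. For the routine one, note that for any $\pi\in\Pi(\nu_0,\nu_1)$ (a nonempty set, as it contains $\nu_0\otimes\nu_1$) and any $\varphi$ with $\|\varphi\|_{\rm Lip}\le1$,
\begin{equation*}
\int_X\varphi\,d(\nu_0-\nu_1)=\int_{X\times X}\bigl(\varphi(x)-\varphi(y)\bigr)\,d\pi(x,y)\le\int_{X\times X}d(x,y)\,d\pi(x,y),
\end{equation*}
because $\nu_0,\nu_1$ are the marginals of $\pi$ and $|\varphi(x)-\varphi(y)|\le d(x,y)$; taking the infimum over $\pi$ and then the supremum over admissible $\varphi$ gives $\sup_\varphi\{\cdots\}\le K(\nu_0,\nu_1)$. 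The side condition $\varphi\in L^1(|\nu_0-\nu_1|)$ is immaterial here: $X$ compact forces $d$, hence every $1$-Lipschitz $\varphi$ normalised at a point, to be bounded, and $\varphi\mapsto\int_X\varphi\,d(\nu_0-\nu_1)$ is insensitive to additive constants.

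For the reverse inequality I would first establish the general Kantorovich duality
\begin{equation*}
K(\nu_0,\nu_1)=\sup\Bigl\{\textstyle\int_X\varphi\,d\nu_0+\int_X\chi\,d\nu_1:\ \varphi,\chi\in C(X),\ \varphi(x)+\chi(y)\le d(x,y)\ \ \forall\,x,y\Bigr\},
\end{equation*}
and then upgrade an arbitrary admissible pair $(\varphi,\chi)$ to one of the special form $(\phi,-\phi)$ with $\phi$ $1$-Lipschitz, without decreasing its value. With $\phi(x):=\inf_{y\in X}\bigl(d(x,y)-\chi(y)\bigr)$ one has $\phi\ge\varphi$ pointwise (test the defining infimum against admissibility); $\phi$ is $1$-Lipschitz as an infimum of the $1$-Lipschitz maps $x\mapsto d(x,y)-\chi(y)$; and $(\phi,\chi)$ is again admissible --- since $\phi(x)\le d(x,y)-\chi(y)$ for every $y$ --- whence $\chi(y)\le\inf_x\bigl(d(x,y)-\phi(x)\bigr)=-\phi(y)$, the last equality holding because the $d$-transform of a $1$-Lipschitz function on a metric space equals its negative. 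Hence
\begin{equation*}
\int_X\varphi\,d\nu_0+\int_X\chi\,d\nu_1\ \le\ \int_X\phi\,d\nu_0-\int_X\phi\,d\nu_1\ =\ \int_X\phi\,d(\nu_0-\nu_1),
\end{equation*}
so the general dual value is $\le\sup_\varphi\{\cdots\}$; combined with the routine inequality and the general duality, all three quantities coincide.

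The main obstacle is the general duality itself, which I would obtain by a minimax argument. Writing $\pi_0,\pi_1$ for the marginals of a nonnegative measure $\pi$ on $X\times X$,
\begin{equation*}
K(\nu_0,\nu_1)=\inf_{\pi\ge0}\ \sup_{\varphi,\chi\in C(X)}\ \Bigl[\textstyle\int_{X\times X}d(x,y)\,d\pi(x,y)+\int_X\varphi\,d(\nu_0-\pi_0)+\int_X\chi\,d(\nu_1-\pi_1)\Bigr],
\end{equation*}
since the inner supremum is $+\infty$ unless $\pi_0=\nu_0$ and $\pi_1=\nu_1$, in which case it equals the transport cost. As $X$ is compact one may restrict $\pi$ to a weak-$*$ compact convex set of measures of bounded total mass; the integrand is affine and weak-$*$ continuous in $\pi$ and affine in $(\varphi,\chi)$, so a Sion-type minimax theorem permits interchanging $\inf$ and $\sup$, after which the inner infimum over $\pi$ forces the constraint $\varphi(x)+\chi(y)\le d(x,y)$ and leaves precisely the dual functional. (Equivalently one invokes Fenchel--Rockafellar duality on $C(X\times X)$, or discretises $X$ and passes to the limit from finite-dimensional linear-programming duality as in Kantorovich's original treatment.) All the genuine care sits in this functional-analytic set-up --- choosing the compact convex domain and the continuity class so the minimax theorem applies, and using lower semicontinuity of $\pi\mapsto\int_{X\times X}d\,d\pi$ together with weak-$*$ compactness of $\Pi(\nu_0,\nu_1)$ to know the primal infimum is attained; the remaining manipulations, including the $d$-transform reduction above, are elementary consequences of the metric structure of $d$.
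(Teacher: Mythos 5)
Your proof is correct and is the standard modern argument for Kantorovich--Rubinstein duality: integrate the Lipschitz bound against a coupling for the easy inequality, then establish the general two-potential duality (via a minimax or Fenchel--Rockafellar argument, made tractable by compactness of $X$) and reduce an admissible pair $(\varphi,\chi)$ to one of the form $(\phi,-\phi)$ by the $d$-transform, using that the $d$-transform of a $1$-Lipschitz function is its negative. The paper itself offers no proof of this statement --- it records it as a classical result of Kantorovich, citing his 1942 Doklady note and the 1958 paper with Rubinstein, and refers forward to its general duality theorem, which is likewise stated without proof --- so there is nothing in the source to compare your route against; your sketch, including the observation that the $L^1(|\nu_0-\nu_1|)$ condition is automatic on a compact metric space, is sound.
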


Lipschitz functions with constant 1 relative to the metric $d$ are called {\em potentials} by Kantorovich.
The transport plan $\pi$ is called a {\em potential plan} if
\[\varphi(x)-\varphi(y)= d(x,y),\;\; \pi \;{\rm a.s.}
\]
for an (optimal) potential function $\varphi$ (this theorem will be complemented in 1958 by the paper \cite{Kan4} joint with his student Rubinstein). In 1947 Kantorovich \cite{Kan3}, seeing the proceeding of a conference held in Leningrad (St. Petersburg) on the bicentennial of Monge's birth, realized that the surfaces of Monge were just the
level surfaces of the optimal potentials (dual functions) he had defined in the Doklady note \cite{Kan2}.
Thus, Kantorovich, a functional analyst motivated by economics applications,  provided a more manageable (relaxed) formulation of the transport problem and major advances opening the avenue for the impressive developments of the past twenty years \cite{RR,E,Vil,AGS,Vil2, OPV}, see \cite{Ver} for a full historical account of Kantorovich's contributions\footnote{It would have been possible to entitle this section ``Kantorovich's Drama". Indeed, to this day it is little known that he should be credited for linear programming, including the simplex method, and for duality theory even in an abstract setting. His metric is curiously called Wasserstein (Vasershtein) due to Dobrushin being aware of Vasershtein paper \cite{Vas} (Vasershtein worked in the laboratory he headed) and not being aware of Kantorovich's publications. Finally, Kantorovich's  ideas on
mathematical economics were long considered in official Soviet circles as
anti-Marxist. Consequently they suffered for many years a sort of ostracism. For all of this see \cite{Ver}. Fortunately, as a partial compensation, he was awarded in 1975 the Nobel price for economics.}.

\section{Elements of optimal mass transport theory}\label{OMT}

The literature on this problem is by now so vast and our degree of competence is such that we shall not even attempt here to give a reasonable and/or balanced introduction to the various fascinating aspects of this theory. Fortunately, there exist excellent monographs and survey papers on this topic, see \cite{RR,E,Vil,AGS,Vil2, OPV}, to which we refer the reader. The range of applications has also increased exponentially; we mention quality control, industrial manufacturing, vehicle path planning \cite{RT1,RT2},
image processing \cite{BCCNP}, computer graphics \cite{Sol1, Sol2, Sol4, Sol5, Sol6,Sol8}, machine learning \cite{Sol3,Arj17, Sol7}, econometrics \cite{AGal}, and so on. We shall only briefly review some concepts and results which are relevant for the topics of this paper.
\subsection{The Monge-Kantorovich static problem}

Let $\nu_0$ and $\nu_1$ be probability measures on the separable, complete metric spaces $X$ and $Y$, respectively. Let $c:X\times Y\rightarrow [0,+\infty)$ be a lower semicontinuous  map with $c(x,y)$ representing the cost of transporting a unit of mass from location $x$ to location $y$.
Let ${\cal T}_{\nu_0\nu_1}$ be the family of  measurable maps $T:X\rightarrow Y$ such that $T\#\nu_0=\nu_1$, namely such that $\nu_1$ is the {\em push-forward} of $\nu_0$ under $T$. Any $T\in {\cal T}_{\nu_0\nu_1}$ is called a {\em transport map}. Then, Monge's optimal mass transport problem (OMT) is
\begin{equation}\label{monge}\inf_{T\in {\cal T}_{\nu_0\nu_1}}\int_X c(x,T(x))d\nu_0(x)
\end{equation}
for the particular case where $c(x,y)=d(x,y)$, i.e., it is a metric on $X$ and $X=Y$.
This problem may be unfeasible and the family ${\cal T}_{\nu_0\nu_1}$ may be empty\footnote{This is the case when e.g., $\nu_0$ is a Dirac distribution and $\nu_1$ the sum of two Dirac distributions of half the magnitude. Since $\nu_0$ needs to be ``split'' so as to be transferred at two separate locations, a transference plan $\cal T$ does not exist as a map.}. This is never the case for the ``relaxed" version of the problem studied by Kantorovich in the 1940's
\begin{equation}\label{kantorovich}
\inf_{\pi\in\Pi(\nu_0,\nu_1)}\int_{X\times Y}  c(x,y)d\pi(x,y).
\end{equation}
Here $\Pi(\nu_0,\nu_1)$ are ``couplings" of $\nu_0$ and $\nu_1$, namely probability distributions on $X\times Y$ with marginals $\nu_0$ and $\nu_1$ called {\em transport plans}. Indeed, $\Pi(\nu_0,\nu_1)$ always contains the product measure $\nu_0\otimes\nu_1$. We have Kantorovich's duality theorem.
\begin{theorem}\label{duality} Suppose $c$ is lower semicontinuous, then  there exists a  solution to Problem {\em (\ref{kantorovich})}. Moreover 
\begin{eqnarray}\nonumber
\min_{\pi\in\Pi(\nu_0,\nu_1)}\int_{X\times Y}c(x,y)d\pi(x,y)=\sup_{(\varphi,\psi)\in \Phi_c}\left[\int_X\varphi d\nu_0+\int_Y\psi d\nu_1\right]\\\nonumber \Phi_c=\{(\varphi,\psi)| \varphi\in L^1(\nu_0), \psi\in L^1(\nu_1), \varphi(x)+\psi(y)\le c(x,y)\}.
\end{eqnarray}
\end{theorem}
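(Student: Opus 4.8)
\medskip
\noindent\emph{Proof plan.} I would establish the two assertions separately, abbreviating the dual objective as $J(\varphi,\psi)=\int_X\varphi\,d\nu_0+\int_Y\psi\,d\nu_1$. One inequality is free: for every $\pi\in\Pi(\nu_0,\nu_1)$ and $(\varphi,\psi)\in\Phi_c$, the marginal property of $\pi$ and the constraint $\varphi(x)+\psi(y)\le c(x,y)$ give
\[
J(\varphi,\psi)=\int_{X\times Y}\big(\varphi(x)+\psi(y)\big)\,d\pi(x,y)\le\int_{X\times Y}c\,d\pi ,
\]
so $\sup_{\Phi_c}J\le\inf_{\Pi(\nu_0,\nu_1)}\int c\,d\pi$. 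Only the reverse inequality, together with attainment of the infimum, requires real work.

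\smallskip
\noindent\emph{Existence of an optimal plan.} I would use the direct method. Since $X,Y$ are Polish, $\nu_0$ and $\nu_1$ are tight, and covering $X$ and $Y$ by compact sets carrying almost all of their mass shows that the whole family $\Pi(\nu_0,\nu_1)$ is tight; it is also weakly closed, hence weakly compact by Prokhorov's theorem. Writing the nonnegative lower semicontinuous cost as an increasing limit $c=\sup_k c_k$ of bounded continuous functions (for instance $c_k=\min\{k,\ \inf_w[c(w)+k\,d(\cdot,w)]\}$), the functional $\pi\mapsto\int c\,d\pi=\sup_k\int c_k\,d\pi$ is a pointwise supremum of weakly continuous functionals, hence weakly lower semicontinuous, and a minimizer exists.

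\smallskip
\noindent\emph{Duality, first for a bounded continuous cost.} Assume $c\in C_b(X\times Y)$ and apply Fenchel--Rockafellar duality on $E=C_b(X\times Y)$ (most transparently when $X,Y$ are compact, so that $E^{*}$ is the space of finite Radon measures by the Riesz representation theorem). Set $\Theta(u)=0$ if $u(x,y)\le c(x,y)$ for all $(x,y)$ and $\Theta(u)=+\infty$ otherwise; and $\Xi(u)=-\int_X\varphi\,d\nu_0-\int_Y\psi\,d\nu_1$ if $u(x,y)=\varphi(x)+\psi(y)$ for some $\varphi\in C_b(X)$, $\psi\in C_b(Y)$ (well defined since $\nu_0,\nu_1$ are probability measures) and $\Xi(u)=+\infty$ otherwise. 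Both functionals are convex, and at the constant $u_0\equiv-1$ one has $\Xi(u_0)=1<\infty$ while $\Theta$ is continuous (since $c\ge 0$, a whole ball around $u_0$ lies in $\{u\le c\}$), so the qualification hypothesis holds. Computing the Legendre transforms, $\Theta^{*}$ becomes $\pi\mapsto\int c\,d\pi$ restricted to the nonnegative measures, and $\Xi^{*}$ the indicator of the measures with marginals $\nu_0,\nu_1$; Fenchel--Rockafellar then yields $\sup_{\Phi_c}J=\min_{\Pi(\nu_0,\nu_1)}\int c\,d\pi$.

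\smallskip
\noindent\emph{Removing the restriction on $c$, and the main obstacle.} For general lower semicontinuous $c\ge 0$, take $c_k\uparrow c$ bounded continuous as above; since $\Phi_{c_k}\subseteq\Phi_c$ and $c_k\le c$, the previous step together with weak duality yields
\[
\min_{\Pi(\nu_0,\nu_1)}\int c\,d\pi\ \ge\ \sup_{\Phi_c}J\ \ge\ \sup_k\sup_{\Phi_{c_k}}J\ =\ \sup_k\min_{\Pi(\nu_0,\nu_1)}\int c_k\,d\pi ,
\]
and it remains only to check that the two outer quantities agree, i.e.\ $\sup_k\min_{\Pi}\int c_k\,d\pi=\min_{\Pi}\int c\,d\pi$: take optimal plans $\pi_k$ for $c_k$, extract a weak limit $\pi_\infty$ using tightness, use $\int c_j\,d\pi_k\le\int c_k\,d\pi_k$ for $k\ge j$, and pass to the limit first in $k$ (weak continuity of $\mu\mapsto\int c_j\,d\mu$) and then in $j$ (monotone convergence). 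I expect the genuine difficulty to be the Fenchel--Rockafellar step on non-compact Polish spaces: beyond verifying the constraint qualification and correctly identifying the two conjugates (so that one recovers both the nonnegativity of the optimal $\pi$ and the exact marginal constraints), the dual of $C_b(X\times Y)$ is a priori only a space of finitely additive set functions, and one must argue that the optimal dual element is a bona fide Radon probability measure with the prescribed marginals --- once more invoking tightness of $\nu_0,\nu_1$. The monotone-approximation step is comparatively routine, its only subtlety being the interchange of $\sup_k$ with $\inf_\pi$, which is covered by the weak compactness already in hand.
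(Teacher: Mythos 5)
The paper does not prove Theorem~\ref{duality}; it is quoted as a classical result from the optimal transport literature (the Kantorovich duality theorem, see e.g.\ Villani's \emph{Topics in Optimal Transportation}, Theorem 1.3, and Chapter 5 of \emph{Optimal Transport: Old and New}). There is therefore no in-paper argument to compare against; what I can do is assess your plan on its own merits.

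Your plan is essentially the standard Villani route, and it is a sound reconstruction. The three steps --- (i) weak duality by integrating the pointwise constraint $\varphi\oplus\psi\le c$ against a coupling, (ii) existence of a minimizer via Prokhorov on the tight, weakly closed set $\Pi(\nu_0,\nu_1)$ together with weak lower semicontinuity of $\pi\mapsto\int c\,d\pi$ obtained from a monotone $C_b$-approximation of $c$, and (iii) strong duality via Fenchel--Rockafellar for bounded continuous cost, then a monotone limit --- are precisely the standard decomposition. You correctly identify that step (iii) is cleanest on compact $X,Y$ (where $C_b(X\times Y)^*$ is a space of Radon measures by Riesz), and you correctly flag the genuine difficulty on non-compact Polish spaces: the topological dual of $C_b$ contains finitely additive set functions, so one must separately argue that the optimal dual element is countably additive, inner regular, and has the prescribed marginals. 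This is exactly where the proof requires additional care in the literature, and the usual way around it is either to prove the compact case first and then exhaust $X\times Y$ by compact rectangles carrying almost all of the mass of $\nu_0\otimes\nu_1$, or to observe that the marginal constraint forces the purely finitely additive part to vanish. You should also be aware that, once you pass to the sup form $\Theta^*(\pi)=\sup_{u\le c}\int u\,d\pi$, concluding $\Theta^*(\pi)=\int c\,d\pi$ on nonnegative $\pi$ for bounded continuous $c$ requires approximating $c$ from below by admissible $u$, and the qualification check at $u_0\equiv -1$ is correct because $c\ge 0$. Finally, the interchange $\sup_k\min_\pi\int c_k\,d\pi=\min_\pi\int c\,d\pi$ is handled correctly by your weak-compactness-plus-monotone-convergence argument. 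In sum: this is the right plan with the right caveats; the one place where you would need to write substantially more is the Radon regularity of the optimal dual variable (or the compact-exhaustion reduction that avoids it).
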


Let us specialize the Monge-Kantorovich problem (\ref{kantorovich}) to the case $X=Y=\R^n$ and $c(x,y)=\|x-y\|^2$. Then, if $\nu_1$ does not give mass to sets of dimension $\le n-1$, by Brenier's theorem \cite[p.66]{Vil}, there exists a unique optimal transport plan $\pi$ (Kantorovich) induced by a $d\nu_0$ a.e.\ unique (Monge) map $T$, of the form $T=\nabla\varphi$ with $\varphi$ convex, so that 
\begin{equation}\label{optmap}\pi=(I\times\nabla\varphi)\#\nu_0, \quad \nabla\varphi\#\nu_0=\nu_1.
\end{equation}
Here $I$ denotes the identity map. Among the extensions of this result, we mention that to strictly convex, superlinear costs $c$, by Gangbo and McCann \cite{GM}.
The optimal transport problem may be used to introduce a useful distance between probability measures. Indeed, let $\mathcal P_2(\R^n)$ be the set of probability measures $\mu$ on $\R^n$ with finite second moment. For $\nu_0, \nu_1\in\mathcal P_2(\R^n)$, the Kantorovich-Wasserstein quadratic distance, is defined by
\begin{equation}\label{Wasserdist}
W_2(\nu_0,\nu_1)=\left(\inf_{\pi\in\Pi(\nu_0,\nu_1)}\int_{\R^n\times\R^n}\|x-y\|^2d\pi(x,y)\right)^{1/2}. 
\end{equation}
As is well known \cite[Theorem 7.3]{Vil}, $W_2$ is a {\em bona fide} distance. Moreover, it provides a most natural way to  ``metrize" weak convergence\footnote{Here, we say that, as $k\to\infty$, $\mu_k$ converges weakly to $\mu$ in $\mathcal P_2(\R^n)$ if $\int_{\R^n}fd\mu_k\rightarrow\int_{\R^n}fd\mu$ for any continuous function $f$ satisfying $f(x)\le c\left(1+d(x,x_0)^2\right)$ for every $x_0\in\R^n$, the latter condition guaranteeing tightness of the sequence $\{\mu_k\}$.}
in $\mathcal P_2(\R^n)$ \cite[Theorem 7.12]{Vil}, \cite[proposition 7.1.5]{AGS} (the same applies to the case $p\ge 1$ replacing $2$ with $p$ everywhere). The {\em Kantorovich-Wasserstein space} $\mathcal W_2$ is defined as the metric space $\left(\mathcal P_2(\R^n),W_2\right)$. It is a {\em Polish space}, namely a separable, complete metric space.

\subsection{The dynamic problem}

So far, we have dealt with {\em the static} optimal transport problem. Nevertheless, in \cite[p.378]{BB} it is observed that ``...a continuum mechanics formulation was already implicitly contained in the original problem addressed by Monge... Eliminating the time variable was just a clever way of reducing the dimension of the problem". Thus, a {\em dynamic} ({\em Eulerian}) version of the OMT problem was already {\em in fieri} in Gaspar Monge's 1781 {\em ``M\'emoire sur la th\'eorie des d\'eblais et des remblais"}\,! It was elegantly accomplished by Benamou and Brenier in \cite{BB} by showing that 
\begin{subequations}\label{eq:BB}
\begin{eqnarray}\label{BB1}&&W_2^2(\nu_0,\nu_1)=\inf_{(\mu,v)}\int_{0}^{1}\int_{\R^n}\|v(t,x)\|^2\mu_t(dx)dt,\\&&\frac{\partial \mu}{\partial t}+\nabla\cdot(v\mu)=0,\label{BB2}\\&& \mu_0=\nu_0, \quad \mu_1=\nu_1.\label{Bboundary}
\end{eqnarray}\end{subequations}
Here the flow $\{\mu_t; 0\le t\le 1\}$ varies over continuous maps from $[0,1]$ to $\mathcal P_2(\R^n)$ and $v$ over smooth fields.  Benamou and Brenier were motivated by computational considerations, a topic which had not received much attention in OMT, see \cite{AHT}. In \cite{Vil2}, Villani states at the beginning of Chapter $7$ that two main motivations for the time-dependent version of OMT are
\begin{itemize}
\item a time-dependent model gives a more complete description of the
transport;
\item the richer mathematical structure will be useful later on.
\end{itemize}
We can add three further reasons:
\begin{enumerate}
\item it allows to view the optimal transport problem as an (atypical) {\em optimal control} problem, see Section \ref{SCOMT} below and  \cite{CGP1}-\cite{CGP10};
\item it opens the way to establish a connection with the {\em Schr\"{o}dinger bridge} problem, where the latter appears as a regularization of the former \cite{Mik, mt, MT,leo,leo2,CGP3,CGP4,CDPS};
\item In some applications such as computer graphics \cite{Sol4, Sol8}, interpolation of images \cite{CGP5}, spectral morphing \cite{JLG}, machine learning \cite{Sol7} and network routing \cite{CGPT2,CGPT2,CGPT3}, the interpolating flow is essential!
\end{enumerate}
Let $\{\mu^*_t; 0\le t\le 1\}$ and $\{v^*(t,x); (t,x)\in[0,1]\times\R^n\}$ be optimal for (\ref{eq:BB}). Then 
$$\mu^*_t=\left[(1-t)I+t\nabla\varphi\right]\#\nu_0, $$ with $T=\nabla\varphi$ solving Monge's problem,
provides McCann's {\em displacement interpolation} between $\nu_0$ and $\nu_1$ \cite{McCann}.
In fact,  $\{\mu^*_t; 0\le t\le 1\}$ may be seen as a constant-speed geodesic joining $\nu_0$ and $\nu_1$ in $\mathcal W_2$ and, moreover, as realized by Otto in \cite{O},
$\mathcal W_2$ can be endowed a Riemannian-like  structure\footnote{More precisely, a {\em weak Riemannian structure}, see \cite[2.3.2]{AG},
a limitation being
 that the tangent space about singular distributions is not ``rich enough,'' leading towards all other nearby distributions.}  which is consistent with  $W_2$.

McCann discovered \cite{McCann} that certain functionals are {\em displacement convex}, namely convex along Wasserstein geodesics. This has led to a variety of applications. Following one of Otto's main discoveries \cite{JKO,O}, it turns out that a large class of PDE's may be viewed as {\em gradient flows}  on the Wasserstein space ${\cal W}_2$. This interpretation, because of the displacement convexity of the functionals, is well suited to establish uniqueness and to study energy dissipation and convergence to equilibrium. A rigorous setting in which to make sense of the Otto calculus has been developed by Ambrosio, Gigli and Savar\'e \cite{AGS} for a suitable class of functionals. Convexity along geodesics in ${\cal W}_2$ also leads to new proofs of various  geometric and functional inequalities \cite{McCann}, \cite[Chapter 9]{Vil}, \cite{CVR}. Finally, we mention that, when the space is not flat, qualitative properties of optimal transport can be quantified in terms of how bounds on the Ricci-Curbastro curvature affect the displacement convexity of certain specific functionals \cite[Part II]{Vil2}.

In passing and for completeness, we note that
the {\em tangent space} of $\mathcal P_2(\R^n)$ at a probability measure $\mu$, denoted by $T_{\mu}\mathcal P_2(\R^n)$ \cite{AGS} may be identified with the closure in  $L^2_{\mu}$ of the span of vector fields 
$\{\nabla\varphi:\varphi\in C^\infty_c\}$, where $C^\infty_c$ is the family of smooth functions with compact support. It is naturally equipped with the inner product of $L^2_{\mu}$. Several recent papers have contributed to the development of
second-order calculus in Wasserstein's space \cite{vonRen,GC,CP1,CP2} building on \cite{Lott,Gigli,AG}.

\subsection{Optimal mass transport
as a stochastic control problem}\label{SCOMT}
{Optimal control, deeply rooted in the classical Calculus of Variations, seeks to modify the natural ({\em free}) evolution of a system so as to minimize a suitable cost. This field received a boost in the days of the space race motivated by aeronautical and astronautical problems such as navigation and the soft moon landing problem \cite[p.\ 21]{FR}. Foundational contributions were provided in the late fifties and in the early sixties by Pontryagin and his school in the Soviet Union, and by Bellman, Kalman, and others in the United States.  When the system starts from random initial conditions and/or is subject to random disturbances, the problem becomes one of a stochastic nature where the cost is now the expectation of a suitable random functional.  A crucial aspect of the problem is the information which is available to design the control action. In this paper, we shall mainly discuss the case where the state variables constitute a fully observable (vector) Markov process with values in a Euclidean space or, in the discrete-time setting, in a finite alphabet set. In the continuous-time setting, standard references are Lee-Markus and  Fleming-Rishel \cite{LM,FR}. For Markov Decision Processes,  standard references are \cite{Put,Berts}. As we shall see, both the OMT problem and  its regularized version (Schr\"odinger Bridge) can be viewed as stochastic optimal control problems: It is possible to derive suitable Hamilton-Jacobi-Bellman (HJB) equations. The difficulty lies with selecting the appropiate solution of the HJB as the boundary term is missing. Our first step in introducing the optimal control viewpoint on the topics of this paper consists in re-deriving the Benamou-Brenier formulation of OMT using elementary control considerations.

Let us start by observing  that the square of the Euclidean distance can be expressed as the infimum of an action integral, namely,
\begin{equation}\label{calvar}
\frac{1}{2}\|x-y\|^2=\min_{x\in\mathcal X_{xy}}\int_{0}^{1}\frac{1}{2}\|\dot{x}\|^2dt
\end{equation}
where $\mathcal X_{xy}$ is the family of $C^1([0,1];\R^n)$ paths with $x(0)=x$ and $x(1)=y$. The minimum in  (\ref{calvar}) is evidently achieved by
$$x^*(t)= (1-t)x+ty,
$$
namely, the straight line joining $x$ and $y$. Since $x^*(t)$ is a Euclidean geodesic, any probabilistic average of the lengths of $C^1$ trajectories starting at $x$ at time $0$ and ending in $y$ at time $1$ gives necessarily a higher value. Thus, the probability measure on $C^1([0,1];\R^n)$ concentrated on the path $\{x^*(t);0\le t\le 1\}$ solves the following problem
\begin{equation}\label{stoch}
\inf_{P_{xy}\in\D^1(\delta_{x},\delta_{y})}\E_{P_{xy}}\left\{\int_{0}^{1}\frac{1}{2}\|\dot{x}\|^2dt\right\},
\end{equation}
where $\D^1(\delta_{x},\delta_{y})$ are the probability measures on $C^1([0,1];\R^n)$ whose initial and final one-time marginals are Dirac's deltas concentrated at $x$ and $y$, respectively. Since (\ref{stoch}) provides us with yet another representation for $\frac{1}{2}\|x-y\|^2$, in view of (\ref{Wasserdist}), we also get that
\begin{equation*}
\inf_{\pi\in\Pi(\nu_0,\nu_1)}\int \frac{1}{2}\|x-y\|^2d\pi(x,y)=
\inf_{\pi\in\Pi(\nu_0,\nu_1)}\int \inf_{P_{xy}\in\D^1(\delta_{x},\delta_{y})}\E_{P_{xy}}\left\{\int_{0}^{1}\frac{1}{2}\|\dot{x}\|^2dt\right\}d\pi.
\end{equation*}

Now observe that if $P_{xy}\in\D^1(\delta_{x},\delta_{y})$ and $\pi\in\Pi(\nu_0,\nu_1)$, then 
$$P=\int_{\R^n\times\R^n}P_{xy}d\pi(x,y)$$
is a probability measure in $\D^1(\nu_0,\nu_1)$, namely a measure on $C^1([0,1];\R^n)$ with initial and final marginals $\nu_0$ and $\nu_1$, respectively. On the other hand, the disintegration of any measure $P\in\D^1(\nu_0,\nu_1)$ with respect to the initial and final positions\footnote{Disintegration can be viewed here as the opposite process to the construction of a product measure.} yields $P_{xy}\in\D^1(\delta_{x},\delta_{y})$ and $\pi\in\Pi(\nu_0,\nu_1)$. Thus, we get that the original optimal transport problem is equivalent to
\begin{equation}\label{stoch2}
\inf_{P\in\D^1(\nu_0,\nu_1)}\E_P\left\{\int_{0}^{1}\frac{1}{2}\|\dot{x}\|^2dt\right\}.
\end{equation}

So far, we have followed \cite[pp.\ 2-3]{leo}.
Instead of the ``particle'' picture, we can also consider the hydrodynamic version of (\ref{calvar}), namely the optimal control problem
\begin{eqnarray}\label{calvarhydro1}
\frac{1}{2}\|x-y\|^2=\inf_{v\in\mathcal V_{y}}\int_{0}^{1}\frac{1}{2} \|v(t,x^v(t))\|^2 dt\\
\dot{x}^v(t)=v(t,x^v(t)),\quad x(0)=x,\nonumber
\end{eqnarray}
where the admissible feedback control laws $v(\cdot,\cdot)$ in $\mathcal V_y$ are continuous and such that $x^v(1)=y$.
Following the same steps as before, we get that the optimal transport problem is equivalent to the following stochastic control problem with atypical boundary constraints
\begin{subequations}\label{eq:stochcontr}
\begin{eqnarray}\label{stochcontr1}&&\inf_{v\in\mathcal V}\E\left\{\int_{0}^{1}\frac{1}{2}\|v(t,x^v(t))\|^2dt\right\}\\&& \dot{x}^v(t)=v(t,x^v(t)),\quad {\rm a.s.},\quad x(0)\sim\nu_0,\quad x(1)\sim\nu_1.\label{stochcontr2}
\end{eqnarray}
\end{subequations}

Finally suppose $d\nu_0(x)=\rho_0(x)dx$, $d\nu_1(y)=\rho_1(y)dy$ and $x^v(t)\sim\rho(t,x)dx$. Then, necessarily, $\rho$ satisfies (weakly) the continuity equation
\begin{equation}\label{continuity}
\frac{\partial \rho}{\partial t}+\nabla\cdot(v\rho)=0
\end{equation}
expressing the conservation of probability mass. Moreover,
$$\E\left\{\int_{0}^{1}\frac{1}{2}\|v(t,x^v(t))\|^2dt\right\}=\int_{\R^n}\int_{0}^{1}\frac{1}{2}\|v(t,x)\|^2\rho(t,x)dtdx.
$$
Hence (\ref{eq:stochcontr}) turns into the Benamou-Brenier problem (\ref{eq:BB}): 
\begin{subequations}\label{eq:bb}
\begin{eqnarray}\label{bb1}&&\inf_{(\rho,v)}\int_{\R^n}\int_{0}^{1}\frac{1}{2}\|v(t,x)\|^2\rho(t,x)dtdx,\\&&\frac{\partial \rho}{\partial t}+\nabla\cdot(v\rho)=0,\label{bb2}\\&& \rho(0,x)=\rho_0(x), \quad \rho(1,y)=\rho_1(y).\label{boundary}
\end{eqnarray}\end{subequations}

The variational analysis for (\ref{eq:stochcontr}) or, equivalently, for (\ref{eq:bb})  can be carried out in many different ways. For instance, let $\mathcal P_{\rho_0\rho_1}$ be the family of flows of probability densities $\rho=\{\rho(t,\cdot); 0\le t\le 1\}$ satisfying (\ref{boundary}) and let $\mathcal V$ be the family of continuous feedback control laws $v(\cdot,\cdot)$. Consider the unconstrained minimization of the Lagrangian over $\mathcal P_{\rho_0\rho_1}\times\mathcal V$,
\begin{equation}\label{lagrangian}
\mathcal L(\rho,v)=\int_{\R^n}\int_{0}^{1}\left[\frac{1}{2}\|v(t,x)\|^2\rho(t,x)+\lambda(t,x)\left(\frac{\partial \rho}{\partial t}+\nabla\cdot(v\rho)\right)\right]dtdx,
\end{equation}
where $\lambda$ is a $C^1$ Lagrange multiplier. Integrating by parts, assuming that limits for $\|x\|\rightarrow\infty$ are zero, we get
\begin{align}\label{lagrangian2}
&\int_{\R^n}\int_{0}^{1}\left[\frac{1}{2}\|v(t,x)\|^2+\left(-\frac{\partial \lambda}{\partial t}-\nabla\lambda\cdot v\right)\right]\rho(t,x)dtdx\\
&\hspace*{90pt}\nonumber+\int_{\R^n}\left[\lambda(1,x)\rho_1(x)-\lambda(0,x)\rho_0(x)\right]dx.
\end{align}
The last integral is constant over $\mathcal P_{\rho_0\rho_1}$ for a fixed $\lambda$ and can therefore be discarded. We are left to minimize
\begin{equation}\label{lagrangian3}\int_{\R^n}\int_{0}^{1}\left[\frac{1}{2}\|v(t,x)\|^2+\left(-\frac{\partial \lambda}{\partial t}-\nabla\lambda\cdot v\right)\right]\rho(t,x)dtdx
\end{equation}
over $\mathcal P_{\rho_0\rho_1}\times\mathcal V$.
We consider doing this in two stages, starting from minimization with respect to $v$ for a fixed flow of probability densities $\rho=\{\rho(t,\cdot); 0\le t\le 1\}$ in $\mathcal P_{\rho_0\rho_1}$. Pointwise minimization of the integrand at each time $t\in[0,1]$
gives that
\begin{equation}\label{optcond}
v^*_\rho(t,x)=\nabla\lambda(t,x)
\end{equation}
which is continuous. Substituting this expression for the optimal control into (\ref{lagrangian3}), we obtain 
\begin{equation}
J(\rho)=-\int_{\R^n}\int_{0}^{1}\left[\frac{\partial \lambda}{\partial t}+\frac{1}{2}\|\nabla\lambda\|^2\right]\rho(t,x)dtdx.
\end{equation}
In view of this, if $\lambda$ satisfies the Hamilton-Jacobi equation
\begin{equation}\label{HJ}
\frac{\partial \lambda}{\partial t}+\frac{1}{2}\|\nabla\lambda\|^2=0,
\end{equation}
then $J(\rho)$ is identically zero over $\mathcal P_{\rho_0\rho_1}$ and any $\rho\in\mathcal P_{\rho_0\rho_1}$ minimizes the Lagrangian (\ref{lagrangian}) together with the feedback control (\ref{optcond}).
We have therefore established the following \cite{BB}:
\begin{proposition}Let $\rho^*(t,x)$ with $t\in[0,1]$ and $x\in {\mathbb R}^n$, satisfy
\begin{equation}\label{optev}
\frac{\partial \rho^*}{\partial t}+\nabla\cdot(\rho^*\nabla\lambda)=0, \quad \rho^*(0,x)=\rho_0(x),
\end{equation}
where $\lambda$ is a solution of the Hamilton-Jacobi equation
\begin{equation}\label{HJclass}
\frac{\partial \lambda}{\partial t}+\frac{1}{2}\|\nabla\lambda\|^2=0
\end{equation}
for some boundary condition $\lambda(1,x)=\lambda_1(x)$.
If $\rho^*(1,x)=\rho_1(x)$, then the pair $\left(\rho^*,v^*\right)$ with $v^*(t,x)=\nabla\lambda(t,x)$ is a solution of {\em (\ref{eq:BB})}.
\end{proposition}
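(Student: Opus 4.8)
The plan is to turn the formal Lagrangian computation of (\ref{lagrangian})--(\ref{HJ}) into a clean verification-of-optimality argument: exhibit $(\rho^*,v^*)$ as feasible for (\ref{eq:BB}), derive a completion-of-squares identity showing that every feasible competitor has action bounded below by an explicit constant, and then observe that $(\rho^*,v^*)$ meets that bound.

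\emph{Step 1 (feasibility).} By construction $\rho^*$ solves the continuity equation (\ref{optev}) with velocity field $v^*=\nabla\lambda$, it has initial marginal $\rho^*(0,\cdot)=\rho_0$, and the standing hypothesis $\rho^*(1,\cdot)=\rho_1$ supplies the terminal marginal; hence $(\rho^*,v^*)$ belongs to the admissible set of (\ref{eq:BB}), and it remains only to show it is a minimizer. \emph{Step 2 (the key identity).} Fix any admissible pair $(\rho,v)$, i.e.\ $\rho\in\mathcal P_{\rho_0\rho_1}$ solving the continuity equation with field $v\in\mathcal V$. Since $\partial_t\rho+\nabla\cdot(v\rho)\equiv 0$, adding $\int_{\R^n}\int_0^1\lambda\,(\partial_t\rho+\nabla\cdot(v\rho))\,dt\,dx=0$ to the action changes nothing; integrating this term by parts exactly as in the passage leading to (\ref{lagrangian2}) — where the spatial-boundary contributions drop under the decay assumptions in force — and then completing the square in $v$ about the value $\nabla\lambda$ of (\ref{optcond}) yields
\[
\int_{\R^n}\!\!\int_0^1\!\tfrac12\|v\|^2\rho\,dt\,dx \;=\; C \;+\; \int_{\R^n}\!\!\int_0^1\!\tfrac12\|v-\nabla\lambda\|^2\rho\,dt\,dx \;-\; \int_{\R^n}\!\!\int_0^1\!\Big(\partial_t\lambda+\tfrac12\|\nabla\lambda\|^2\Big)\rho\,dt\,dx ,
\]
where $C:=\int_{\R^n}\big[\lambda(1,x)\rho_1(x)-\lambda(0,x)\rho_0(x)\big]\,dx$ is a constant independent of $(\rho,v)$. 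Invoking the Hamilton--Jacobi equation (\ref{HJclass}) annihilates the last integral, so the action equals $C+\int_{\R^n}\int_0^1\tfrac12\|v-\nabla\lambda\|^2\rho\,dt\,dx\ \ge\ C$, with equality precisely when $v=\nabla\lambda$ holds $\rho$-a.e.

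\emph{Step 3 (conclusion).} The pair $(\rho^*,v^*)$ from Step 1 has $v^*\equiv\nabla\lambda$, so its action equals $C$; by Step 2 every admissible competitor has action $\ge C$; hence $(\rho^*,v^*)$ minimizes (\ref{eq:BB}), which is the assertion. (The same argument, read with the unnormalized cost $\|v\|^2$ of (\ref{BB1}), only rescales $C$ and leaves the minimizer unchanged.)

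The main technical obstacle — and the only point where the text's formal derivation needs care — is the integration by parts in Step 2: one must assume enough regularity and spatial decay on the solution $\lambda$ of (\ref{HJclass}) (a genuine $C^1$ solution for the chosen terminal data $\lambda_1$, e.g.\ produced by the Hopf--Lax formula) together with enough integrability and decay on the flow $\{\rho(t,\cdot)\}$ so that $\partial_t(\lambda\rho)$ and $\nabla\cdot(\lambda v\rho)$ integrate to exactly the temporal boundary contribution $C$ with no residual flux at infinity — precisely the ``limits for $\|x\|\to\infty$ are zero'' hypothesis invoked before (\ref{lagrangian2}). Under the blanket smoothness and decay assumptions of this section this is routine, and everything else reduces to the elementary completion of squares displayed above.
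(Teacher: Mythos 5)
Your argument is correct and is essentially the paper's own Lagrangian calculation, merely reorganized: the completion-of-squares identity in Step 2 is exactly what the paper obtains by integrating by parts in \eqref{lagrangian} and pointwise-minimizing over $v$ in \eqref{lagrangian3}, and invoking \eqref{HJclass} to kill the residual term plays the same role in both. The only stylistic difference is that you state the resulting sufficiency bound (action $\ge C$, attained at $v=\nabla\lambda$) explicitly rather than leaving it implicit in the phrase ``minimizes the Lagrangian,'' which is a welcome clarification but not a different proof.
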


The stochastic nature of the Benamou-Brenier formulation \eqref{eq:bb} stems from the fact that initial and final densities are specified. Accordingly, the above requires solving a two-point boundary value problem and the resulting control dictates the local velocity field. In general, one cannot expect to have a classical solution of (\ref{HJclass}) and has to be content with a viscosity solution \cite{FS}. See \cite{TT} for a recent contribution in the case when only samples of $\rho_0$ and $\rho_1$ are known.}

\subsection{Optimal mass transport with a ``Prior"}\label{OMTprior}


The stochastic control formulation \eqref{eq:stochcontr} of OMT casts this as a problem to steer the dynamical system $\dot x = u$, with $u$ being the control input, between specified marginal distributions for the state. 
The generalization to non-trivial underlying dynamics of the form
$\dot x = f(t,x) + u$ leads in a similar manner to 
\begin{subequations}\label{eq:stochcontrgeneral}
\begin{eqnarray}&&\inf_{u\in\mathcal V}\E\left\{\int_{0}^{1}\frac{1}{2}\|u(t,x^u(t))\|^2dt\right\}\\&& \dot{x}^u(t)=f(t,x^u(t))+u(t,x^u(t)),\quad {\rm a.s.},\quad x(0)\sim\nu_0,\quad x(1)\sim\nu_1.
\end{eqnarray}
\end{subequations}

Once again, this is a non-standard minimum energy control-effort problem due to the constraint on the final state distribution.	
A fluid dynamic reformulation proceeds as follows. Suppose we are given two end-point marginal probability densities $\rho_0$ and $\rho_1$, and suppose we are also given a model
\begin{equation}\label{continuityeq}
\frac{\partial \rho}{\partial t}+\nabla\cdot(f\rho)=0
\end{equation}
for the flow of probability densities $\{\rho(t,x); 0\le t\le 1\}$,
for a  continuous vector field $f(\cdot,\cdot)$, which however is not consistent with the given end-point marginals. Then, (\ref{continuityeq}) represents a ``prior" evolution to serve as a reference when seeking an update in the vector field to minimize the quadratic cost
\begin{subequations}\label{eq:priorBB}
\begin{eqnarray}\label{priorBB1}
&&\inf_{(\rho,v)}\int_{\R^n}\int_{0}^{1}\frac{1}{2}\|v(t,x)-f(t,x)\|^2\rho(t,x)dtdx,
\\&&\frac{\partial \rho}{\partial t}+\nabla\cdot(v\rho)=0,\label{priorBB2}
\\&&\rho(0,x)=\rho_0(x), \quad \rho(1,y)=\rho_1(y).\label{priorboundary}
\end{eqnarray}
\end{subequations}
Clearly, if the prior flow satisfies $\rho(0,x)=\rho_0(x)$ and $\rho(1,y)=\rho_1(y)$, then it solves the problem and $v^*=f$. Moreover, the standard OMT problem is recovered when the prior evolution is constant, i.e. $f\equiv 0$.


\begin{remark}
{\em From a different angle, problem \eqref{eq:priorBB} can be motivated as follows. It seeks a correction $v$ to a transportation plan $f$ that has already been computed from obsolete data (e.g., marginal distributions for transportantion of resources in $\rho_{0,\rm old}$ to meet demands in $\rho_{1,\rm old}$) as this data is being updated to a new set of marginals $\rho_0$ and $\rho_1$, respectively.}\hfill$\Box$
\end{remark}

{The particle version of \eqref{eq:priorBB} takes the form of a more familiar OMT problem, namely
\begin{equation}\label{GenOptTrans}
\inf_{\pi\in\Pi(\nu_0,\nu_1)}\int_{\R^n\times\R^n}c(x,y)d\pi(x,y),
\end{equation}
where  $d\nu_0(x)=\rho_0(x)dx$, $d\nu_1(y)=\rho_1(y)dy$ and
\begin{equation}\label{cost}
c(x,y)=\inf_{x\in\mathcal X_{xy}}\int_{0}^{1}L(t,x(t),\dot{x}(t))dt,\quad L(t,x,\dot{x})=\|\dot{x}-f(t,x)\|^2.
\end{equation}
The explicit calculation of the function $c(x,y)$ when $f\not\equiv 0$ is nontrivial. Moreover, the zero-noise limit results of \cite[Section 3]{leo2}, based on a Large Deviations Principle \cite{DZ}, although very general in other ways, seem to cover here only the case where $c(x,y)=c(x-y)$ is strictly convex originating from a Lagrangian $L(t,x,\dot{x})=c(\dot{x})$. 
We mention that  \cite{CGP5} deals with OMT problems where the Lagrangian is {\em not strictly convex} with respect to $\dot{x}$. Finally, we feel that the present formulation is a most natural one in which to study zero-noise limits of Schr\"odinger bridges with a general Markovian prior evolution. References \cite{CGP4,CGP5} discuss this same problem in the  case of a Gaussian prior, and show directly the convergence of the solution to the Hamilton-Jacobi-Bellman equation to solution of a Hamilton-Jacobi equation.}

The variational analysis for \eqref{eq:priorBB} can be carried out as in Section \ref{SCOMT} obtaining the following result:

\begin{proposition}If $\lambda$ satisfies the Hamilton-Jacobi equation
\begin{equation}\label{eq:hamiltonjacobi}
\frac{\partial \lambda}{\partial t}+f\cdot\nabla\lambda+\frac{1}{2}\|\nabla\lambda\|^2=0,
\end{equation}
and is such that the solution
$\rho^*$ to
\begin{equation}\label{prioroptev}
\frac{\partial \rho^*}{\partial t}+\nabla\cdot[(f+\nabla\lambda)\rho^*]=0, \quad \rho^*(0,x)=\rho_0(x),
\end{equation}
satisfies the end-point condition $\rho^*(1,x)=\rho_1(x)$ as well, then the pair 
\[
\left(\rho^*(t,x), v^*(t,x)=f(t,x)+\nabla\lambda(t,x)\right)
\]
solves \eqref{eq:priorBB}, provided $\lambda(t,x)\rho^*(t,x)$ vanishes as $\|x\|\rightarrow \infty$ for each fixed $t$.
\end{proposition}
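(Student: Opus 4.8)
The plan is to repeat, \emph{mutatis mutandis}, the variational argument carried out for the classical Benamou--Brenier problem in Section~\ref{SCOMT}, now with the reference drift $f$ present. First I would form the Lagrangian associated with \eqref{eq:priorBB}, attaching a $C^1$ multiplier $\lambda(t,x)$ to the continuity-equation constraint \eqref{priorBB2},
\[
\mathcal L(\rho,v)=\int_{\R^n}\int_{0}^{1}\left[\frac{1}{2}\|v(t,x)-f(t,x)\|^2\rho+\lambda\left(\frac{\partial \rho}{\partial t}+\nabla\cdot(v\rho)\right)\right]dtdx,
\]
and minimize it without constraint over $\mathcal P_{\rho_0\rho_1}\times\mathcal V$. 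Integration by parts in $t$ and in $x$ --- legitimate precisely under the stated decay hypothesis that $\lambda(t,x)\rho^*(t,x)\to 0$ as $\|x\|\to\infty$ --- moves the derivatives off $\rho$ and produces the boundary term $\int_{\R^n}[\lambda(1,x)\rho_1(x)-\lambda(0,x)\rho_0(x)]dx$, which is constant over $\mathcal P_{\rho_0\rho_1}$ for fixed $\lambda$ and hence may be discarded.

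Second, freezing an admissible flow $\rho$, I would minimize the remaining integrand pointwise in $(t,x)$ over $v$. The integrand is $\frac{1}{2}\|v-f\|^2\rho-\partial_t\lambda\,\rho-\nabla\lambda\cdot v\,\rho$, a strictly convex quadratic in $v$ wherever $\rho>0$; annihilating its $v$-gradient gives $v^*_\rho(t,x)=f(t,x)+\nabla\lambda(t,x)$, which is continuous, hence admissible. Substituting $v^*_\rho$ back and completing the square, the reduced functional becomes
\[
J(\rho)=-\int_{\R^n}\int_{0}^{1}\left[\frac{\partial \lambda}{\partial t}+f\cdot\nabla\lambda+\frac{1}{2}\|\nabla\lambda\|^2\right]\rho(t,x)\,dtdx.
\]
Invoking the Hamilton--Jacobi equation \eqref{eq:hamiltonjacobi}, the bracket vanishes identically, so $J(\rho)\equiv 0$ on $\mathcal P_{\rho_0\rho_1}$; thus the infimum of the Lagrangian equals the discarded constant boundary term and is attained by \emph{every} admissible flow paired with its $v^*_\rho$. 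In particular, the flow $\rho^*$ generated by \eqref{prioroptev} is admissible exactly because its terminal marginal is assumed to equal $\rho_1$, and for it $v^*_{\rho^*}=f+\nabla\lambda$; since $\lambda$ is a genuine Lagrange multiplier, the constrained problem \eqref{eq:priorBB} and the unconstrained Lagrangian share the same optimal value, and therefore $(\rho^*,v^*)$ solves \eqref{eq:priorBB}.

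The main obstacle --- exactly as in the $f\equiv 0$ case --- is rigor rather than formal manipulation: justifying the integration by parts (the vanishing-at-infinity hypothesis is inserted for this purpose), handling the set where $\rho$ vanishes so that the pointwise minimization in $v$ is meaningful, and, above all, the standing assumption that a $C^1$ solution $\lambda$ of \eqref{eq:hamiltonjacobi} with the required decay exists and steers $\rho_0$ to $\rho_1$ via \eqref{prioroptev}. In general only a viscosity solution is available, and the induced cost $c(x,y)$ coming from $L(t,x,\dot x)=\|\dot x-f(t,x)\|^2$ is nontrivial when $f\not\equiv 0$; accordingly the proposition is best read as the verification statement it is --- \emph{if} such a $\lambda$ and the end-point matching hold, \emph{then} optimality of $(\rho^*,v^*)$ follows.
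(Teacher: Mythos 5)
Your proposal is correct and follows exactly the route the paper intends: the paper offers no separate proof but says ``the variational analysis for \eqref{eq:priorBB} can be carried out as in Section \ref{SCOMT},'' and your Lagrangian argument---attaching a multiplier to the continuity constraint, integrating by parts, minimizing pointwise in $v$ to get $v^*=f+\nabla\lambda$, and invoking the modified Hamilton--Jacobi equation \eqref{eq:hamiltonjacobi} to make the reduced functional vanish---is precisely that carry-over with the drift $f$ present. The computations (in particular that substituting $v^*=f+\nabla\lambda$ turns the integrand into $-[\partial_t\lambda+f\cdot\nabla\lambda+\tfrac12\|\nabla\lambda\|^2]$) check out, and your closing caveats about rigor mirror the paper's own treatment.
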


\section{Schr\"{o}dinger's bridges}
Two excellent surveys on this topic are \cite{Wak,leo}. See also \cite{CGP15} for an exposition of the Schr\"{o}dinger bridge problems in the context of control engineering.
\subsection{The hot gas Gedankenexperiment}
In 1931/32, Erwin Schr\"{o}dinger considered the following Gedankenexperiment \cite{S1,S2}: We have a cloud of $N$ independent Brownian particles evolving in time\footnote{To put Schr\"odinger's 1931 work in perspective, one has to recall  what science had accomplished on the atomic hypothesis and physical Brownian motion by that time, besides Boltzmann's work \cite{BOL}. For this, see \cite{N1} and Section \ref{AHMSO} below.}.
This cloud of particles has been observed having
at the initial time $t=0$ an empirical distribution approximatively  equal to $\rho_0(x)dx$. At time $t=1$, an empirical distribution approximatively equal to  $\rho_1(x)dx$ is observed which considerably differs 
from what it should be according to the law of large numbers ($N$ is large, say of the order of Avogadro's number), namely
$$\rho_1(y)\neq \int_{\R^n}p(0,x,1,y)\rho_0(x)dx,
$$
where
\begin{equation}\label{transitiondensity}
p(s,x,t,y)=\left[2\pi(t-s)\right]
^{-\frac{n}{2}}\exp\left[-\frac{\|x-y\|^2} {2(t-s)}\right],\quad s<t
\end{equation}
is the transition density of the Wiener process (heat kernel). It is apparent that the particles have been transported in an unlikely way. But of the many unlikely ways in which this could have happened, which one is
the most likely? In modern probabilistic terms, this is a problem of {\em large deviations of the empirical distribution} as observed by F\"ollmer \cite{F2}.
Thus, at the outset, Schr\"odinger's motivation and the context of his question had no connection to OMT that we just discussed--the confluence of the two that we highlight shortly may be seen as a deep and lucky coincidence.

\subsection{Large deviations and maximum entropy}
\label{margedevmaxent}
The area of large deviations is concerned with the probabilities of rare events.
In light of
Sanov's theorem \cite{SANOV}, Schr\"odinger's problem can be 
seen as
a large deviations (maximum entropy) problem for distributions on trajectories, as we proceed to explain.

Let $\Omega=C([0,1];\R^n)$ be the space of $\R^n$-valued continuous functions, $\cD$ be the space of probability measures on $\Omega$, and  $W_x\in\cD$ denote Wiener measure starting at $x$ at $t=0$. If, instead assuming a Dirac marginal concentrated at $x$, we
enlarge $W_x$ to subsume
the volume measure at $t=0$,
we obtain
$$W:=\int W_x\,dx.$$
This is an unbounded nonnegative measure on the path space $\Omega$, called {\em stationary Wiener measure} (or, sometimes, {\em reversible Brownian motion}); $W$ has marginals at each point in time that coincide with the Lebesgue measure and therefore, while still nonnegative, it is not a probability measure. In fact it serves as a convenient analogue of the Lebesgue measure on paths that ``symmetrizes'' and ``uniformizes'' the Wiener measure with respect to the time arrow.

Alternatively,  we can enlarge $W_x$ to
$$W_\rho:=\int W_x\,\rho(x) dx$$
so that instead the Dirac marginal at $t=0$ it now has  a  marginal probability measure $\rho dx$. Clearly $W_\rho$ is a probability measure on $\Omega$ which is absolutely continuous with respect to $W$.
Indeed, if
\[
\D(P\| Q) = \begin{cases} \E_{P}\left\{ \log \frac{\De P}{\De Q} \right\}, \quad & \mbox{if $ P \ll Q$}\\+ \infty \quad & \mbox{otherwise} \end{cases}.
\]
denotes the relative entropy functional (divergence, Kullback-Leibler index) between nonnegative measures, it can be seen that
\[
\D(W_\rho\|W)=\int \rho \log(\rho) dx\, =-S(\rho),
\]
where $S(\rho)$ is the differential entropy of the measure $\rho(x)dx$.
Moreover,
\[
\D(W_\rho\| W_{\rho_0})= \int \rho \log(\frac{\rho}{\rho_0})dx\; =:\,\D(\rho\|\rho_0).
\]

Now let us consider $X^1, X^2,\ldots$ be i.i.d.\ Wiener evolutions on $[0,1]$ with values in $\R^n$ and starting value $x^i$ distributed according to $\rho$. The {\em empirical distribution} $\mu_N$ associated to $X^1,X^2,\ldots X^N$ is defined by
\begin{equation}
\label{emp}\mu_N(X^1,X^2,\ldots, X^N):=\frac{1}{N}\sum_{i=1}^N\delta_{X^i}.
\end{equation}
The expression in (\ref{emp}) defines a map from $\Omega^N$ to the space ${\cal D}$ of probability distributions on $C([0,1];\R^n)$. Hence, for  $E\subset {\cal D}$, we may consider the probability of
\[
\{(\omega^1,\ldots,\omega^N)|\mu_N(\cdot)\in E\},
\]
in the product measure $W^N_\rho$ on $\Omega^N$. By the ergodic theorem, see e.g. \cite[Theorem A.9.3.]{ellis}, as $N$ tends to infinity, the sequence of distributions $\mu_N$ converges weakly
to $W_\rho$. Hence, if $W_\rho\not\in E$, it follows that 
\[
W_\rho^N(\{(\omega^1,\ldots,\omega^N)| \mu_N(\cdot)\in E\})\searrow 0.
\]
In this, large deviation theory provides us with a much finer result, that the decay is {\em exponential} and the exponent may be characterized solving a {\em maximum entropy problem} \cite{DZ}.

Specifically, in our setting we let $E={\cal D}(\rho_0,\rho_1)$, namely the set of distributions on $C([0,1];\R^n)$ having marginal densities $\rho_0$ and $\rho_1$ at times $t=0$ and $t=1$, respectively. Then, Sanov's theorem \cite{SANOV}, \cite[Theorem 6.2.10]{DZ} asserts that, assuming the ``prior" $W_{\rho}$ does not have the required marginals, the probability of observing an empirical distribution $\mu_N$ in a neighborhood of ${\cal D}(\rho_0,\rho_1)$ in the weak topology, decays as
\[
e^{-N \inf\left\{\D(P\|W_\rho) \mid  P\in{\cal D}(\rho_0,\rho_1)\right\}}
\]
as $N\to\infty$.
This large-deviations statement can be turned around in the spirit of Gibbs conditioning principle (cf.\ \cite[Section 7.3]{DZ}), to deduce that, given as data the marginal distributions $\rho_0,\rho_1$, the most likely distribution is the closest to $W_\rho$ in the sense of relative entropy. But, in our setting, $\rho$ is unspecified, and in light of the fact that
\[
\D(P\| W_\rho)=\D(P\| W)-\D(\rho_0\| \rho),
\]
the most likely random evolution between two given marginals is in fact the solution of the Schr\"odinger Bridge Problem:

\begin{problem}\label{bridge}
\begin{equation}P_{\rm SBP}:={\rm argmin}\{ \D(P\|W) \mid P\in{\cal D}(\rho_0,\rho_1)\}.
\end{equation} 
\end{problem}

The optimal solution is referred to as the \emph{Schr\"odinger bridge} between $\rho_0$ and $\rho_1$ over $W$, since its marginal flow $\{\rho(t,\cdot); 0\le t\le 1\}$, which is the \emph{entropic interpolation} between $\rho_0$ and $\rho_1$, is seen as a ``bridge'' between the two marginals.

Next we discuss the structure of the solution and the reduction of the above to a static problem.

Let $P\in\cal D$ be a finite-energy diffusion \cite{Foe}. That is, under $P$, the canonical coordinate process $X_t(\omega)=\omega(t)$ has a (forward) It\^o differential
\begin{equation}\label{fordiff}dX_t=\beta_tdt+dW_t
\end{equation}
 where $\beta_t$ is adapted to $\{{\cal F}^-_t\}$ (${\cal F}^-_t$ is the $\sigma$-algebra of events up to time $t$) and
 \begin{equation}\label{finenergy}\E_P\left[\int_0^1\|\beta_t\|^2dt\right]<\infty.
 \end{equation}
Conditioning the process on starting at $X_0=x$ and ending at $X_1=y$ gives
 \[P_{xy}=P\left[\,\cdot\mid X_0=x,X_1=y\right],\quad W_{xy}=W\left[\,\cdot\mid X_0=x,X_1=y\right].
\]
These laws are referred to as the disintegrations of $P$ and $W$ with respect to the initial and final positions \cite{ChangPollard}.
Let also $\rho_{01}^P$ and  $\rho_{01}^{W}$
be the joint initial-final time distributions under $P$ and $W$, respectively. Then, we have the following decomposition of relative entropy \cite{F2}
\begin{eqnarray}
\D(P\|W)&=&\E_P\left[\log\frac{dP}{dW}\right]=\int\int\left[\log\frac{\rho^P_{01}(x,y)}{\rho_{01}^{W}(x,y)}\right]\rho^P_{01}(x,y)dxdy\nonumber\\
&&\hspace*{.5cm}
+\int\int\int\left(\log\frac{dP^{y}_{x}}{dW^{y}_{x}}\right)dP^y_x \rho^P_{01}(x,y)dxdy.\label{decomposition}
\end{eqnarray}
Clearly, since $\rho^P_{01}(x,y)$ and $dP_x^y$ can be independently chosen, the choice $P^{y}_{x}=W^{y}_{x}$ which actually makes the second summand equal to zero is optimal.  Thus, Problem \ref{bridge} reduces to the following
``static'' one:

\begin{problem}\label{static}
Minimize
\begin{equation}\label{staticindex}
\D(\rho_{01}\|\rho_{01}^{W})=\int\int\left[\log\frac{\rho_{01}(x,y)}{\rho_{01}^{W}(x,y)}\right]\rho_{01}(x,y)dxdy
\end{equation}
over the set of densities
\[
\Pi(\rho_0,\rho_1):=\left\{\rho_{01} \mbox{ on }\R^n\times\R^n\mid 
\int \rho_{01}(x,y)dy=\rho_0(x),\quad \int \rho_{01}(x,y)dx=\rho_1(y)\right\}.
\]
\end{problem}

One should note that the conditions defining $\Pi$ are linear; the elements of  $\Pi(\rho_0,\rho_1)$ are referred to as {\em couplings} between $\rho_0$ and $\rho_1$.
If $\rho^*_{01}$ is the solution to Problem \ref{static}, i.e., the optimal coupling, then, evidently,
\begin{equation}\label{representation}P^*(\cdot)=\int_{\R^n\times\R^n} W_{xy}(\cdot)\rho^*_{01}(dxdy),
\end{equation}
solves\footnote{Note that decomposition (\ref{decomposition}) and the resulting argument remain valid even when the prior measure is induced by  any, possibly non-Markovian, finite-energy diffusion $\bar{P}$, see (\ref{representation2}) below.} Problem \ref{bridge}.
The structure of the problem further endows $P^*$ with the same three-points transition density \[p^*(s,x,t,y,u,z)=\frac{p(s,x,t,y)p(t,y,u,z)}{p(s,x,u,z)}, s<t<u,
\]
as the prior -- a property that is referred to by saying that it belongs to the same {\em reciprocal class} as the prior measure \cite{Ber, Jam1}.

It is natural to consider the case when the prior is a Wiener measure with  variance $\epsilon$, denoted by
$W_\epsilon$ and having transition kernel
\[p_\epsilon(0,x,1,y)=\left[2\pi\epsilon\right]
^{-\frac{n}{2}}\exp\left[-\frac{\|x-y\|^2} {2\epsilon}\right],
\]
and to contemplate the limiting process when $\epsilon\to 0$.
Indeed, using $\rho_{01}^{W_\epsilon}(x,y)=\rho^{W_\epsilon}_0(x)p_\epsilon(0,x;1,y)$ and the fact that
\[\int\int\left[\log\rho_{0}^{W_\epsilon}(x)\right]\rho_{01}(x,y)dxdy=\int\left[\log\rho_{0}^{W_\epsilon}(x)\right]\rho_{0}(x)dx
\]
is independent of $\rho_{01}$, we obtain
\begin{align}\D(\rho_{01}\|\rho_{01}^{{W_\epsilon}})&=-\int\int\left[\log\rho_{01}^{W_\epsilon}(x,y)\right]\rho_{01}(x,y)dxdy+\int\int\left[\log\rho_{01}(x,y)\right]\rho_{01}(x,y)dxdy\\\nonumber
&=\int\int\frac{\|x-y\|^2}{2\epsilon}\rho_{01}(x,y)dxdy-S(\rho_{01})+\text{constant},
\end{align}
where $S$ is the differential entropy. Thus, Problem \ref{static} of minimizing $\D(\rho_{01}\|\rho_{01}^{{W_\epsilon}})$ over the couplings $\Pi(\rho_0,\rho_1)$
is equivalent to
\begin{equation}\label{eq:regularizedOT}
 \min_{\rho_{01} \in \Pi(\rho_0,\rho_1)}  \int  \frac{\|x-y\|^2}{2} \rho_{01}(x,y) \De x\De y +  \epsilon \int \rho_{01}(x,y) \log \rho_{01}(x,y) \De x\De y.
\end{equation}
It is seen that in the limit, as $\epsilon\to 0$, the cost reduces to $\frac12\|x-y\|^2$ which is in the form of the Kantorovich functional in \ref{problemkantorovich}. Thus, {\em the Schr\"odinger Bridge Problem represents a regularization of Optimal Mass Transport (OMT) obtained by subtracting from the cost functional a term proportional to the entropy}.

We have already seen that the Schr\"odinger's bridge problem can be motivated in three different ways. Firstly, via the original statistical mechanical thought-experiment of E.\ Schr\"odinger (a large-deviations problem).  
Secondly, via Sanov's theorem and Gibbs conditioning principle, as a maximum entropy problem.
It is an early and important instance of an inference method that prescribes how to choose a posterior distribution making the fewest number of assumptions beyond the available information. This approach has been  noticeably developed over the years by  Jaynes, Burg, Dempster, and Csisz\'{a}r \cite{Jaynes57,Jaynes82,BURG1,BLW,Dem,csiszar0,csiszar1,csiszar2}. Both of these two forms of the problem have their roots in Boltzmann's work \cite{BOL}.  The more recent third motivation comes from regularized OMT \cite{Mik,mt,MT,leo2,leo,CDPS} which mitigates its computational challenges \cite{AHT,BB,PC}. 

\subsection{Derivation of the Schr\"{o}dinger system}\label{derivation}
The Lagrangian function of Problem \ref{static} has the form
\begin{eqnarray}\nonumber
&&{\cal L}(\rho_{01},\lambda, \mu)= \int\int\left[\log\frac{\rho_{01}(x,y)}{\rho_{01}^{W}(x,y)}\right]\rho_{01}(x,y)dxdy\\&&+\int \lambda(x)\left[\int \rho_{01}(x,y)dy-\rho_0(x)\right]+\int \mu(y)\left[\int \rho_{01}(x,y)-\rho_1(y)\right].\nonumber
\end{eqnarray}
Setting the first variation equal to zero, we get the (sufficient) optimality condition
$$1+\log \rho^*_{01}(x,y)-\log p(0,x, 1,y)-\log \rho^{W}_0(x)+\lambda(x)+\mu(y)=0,
$$
where we have used the expression $\rho_{01}^{W}(x,y)=\rho^{W}_0(x)p(0,x,1,y)$ with $p$ as in (\ref{transitiondensity}). We get
\begin{eqnarray}\nonumber
\frac{\rho^*_{01}(x,y)}{p(0,x, 1,y)}&=&\exp\left[\log \rho^{W}_0(x)-1-\lambda(x)-\mu(y)\right]\\&=&\exp\left[\log \rho^{W}_0(x)-1-\lambda(x)\right]\exp\left[-\mu(y)\right].\nonumber
\end{eqnarray}
Thus,  the ratio $\rho^*_{01}(x,y)/p(0,x,1,y)$ factors into a function of $x$ times a function of $y$. Let us denote them by $\hat{\varphi}(x)$ and $\varphi(y)$, respectively. The optimal $\rho^*_{01}(\cdot,\cdot)$ has then the form  the form 
\begin{equation}\label{optimaljoint} 
\rho^*_{01}(x,y)=\hat{\varphi}(x) p(0,x,1,y)\varphi(y),
\end{equation} 
with $\varphi$ and $\hat{\varphi}$  satisfying
\begin{eqnarray}\hat{\varphi}(x)\int p(0,x,1,y)\varphi(y)dy&=&\rho_0(x),\label{opt1}\\
\varphi(y)\int p(0,x,1,y)\hat{\varphi}(x)dx&=&\rho_1(y).\label{opt2}
\end{eqnarray}
Let $\hat{\varphi}(0,x)=\hat{\varphi}(x)$, $\varphi(1,y)=\varphi(y)$ and  
$$\hat{\varphi}(1,y):=\int p(0,x,1,y)\hat{\varphi}(0,x)dx,\quad \varphi (0,x):=\int p(0,x,1,y)\varphi(1,y).
$$
Then, (\ref{opt1})-(\ref{opt2}) is equivalent to  the system
\begin{subequations}
\begin{eqnarray}\label{Schonestep1}
\hat{\varphi}(1,y)=\int p(0,x,1,y)\hat{\varphi}(0,x)dx,
\\\label{Schonestep2}\quad \varphi (0,x)=\int p(0,x,1,y)\varphi(1,y)dy
\end{eqnarray}
with the boundary conditions
\begin{equation}\label{BConestep}
\varphi(0,x)\cdot\hat{\varphi}(0,x)=\rho_0(x),\quad \varphi(1,y)\cdot\hat{\varphi}(1,y)=\rho_1(y).
\end{equation}
\end{subequations}
The question of existence and uniqueness of  positive functions $\hat{\varphi}$, $\varphi$ satisfying (\ref{Schonestep1})-(\ref{Schonestep2})-(\ref{BConestep}), left open by Schr\"{o}dinger, is a highly nontrivial one and has been settled in various degrees of generality by Fortet, Beurling, Jamison and F\"{o}llmer \cite{For,Beu,Jam2,F2},
see also\cite{leo,CLMW}; note that both Fortet and Beurling predate Sinkhorn's work \cite{Sin64,Sin67} on a problem in statistics that turned out to be closely related.

There are basically two ways to deal with the existence of solutions to the Schr\"odinger system of equations (\ref{Schonestep1}--\ref{BConestep}). One is to prove existence for the dual problem of the original convex optimization problem. This was first accomplished by Beurling \cite{Beu} leading to \cite{Jam2}, see also the recent paper \cite{CLMW}.  Alternatively, one can try to prove convergence of a suitable successive approximation scheme. This was first accomplished by Fortet \cite{For}, see also \cite{CGP9}. We outline Fortet's results in Section \ref{contIPF}. We remark that, in the special case where the marginals $\rho_0, \rho_1$ are Gaussian, the Schr\"odinger system has a closed-form solution. This was only recently discovered in \cite{CGP1}. The discrete counterpart of Fortet's algorithm is the so-called IPF-Sinkhorn algorithm which is discussed in Section \ref{ITERATIVEALGORITHM}. Also notice that in the recent paper \cite{EP} (see also \cite{leo3}), the bulk of Fortet's paper has been rewritten filling in all the missing steps and explaining the rationale behind his complex approximation scheme. 

The pair $(\varphi,\hat{\varphi})$ satisfying  (\ref{Schonestep1}--\ref{BConestep}) is unique up to multiplication of $\varphi$ by a positive constant $c$ and division of $\hat{\varphi}$ by the same constant.
At each time $t$, the marginal $\rho(t,\cdot)$ factors as
\begin{equation}\label{factorization}\rho(t,x)=\varphi(t,x)\cdot\hat{\varphi}(t,x).
\end{equation}
 The factorization (\ref{factorization}) resembles Born's relation (in quantum theory)
\[\rho(t,x)=\psi(t,x)\cdot\bar{\psi}(t,x)
\]
with $\psi$ and $\bar{\psi}$ satisfying two adjoint equations like $\varphi$ and $\hat{\varphi}$. Moreover, the solution of Problem \ref{bridge} exhibits the following remarkable {\em reversibility property}: Swapping the two marginal densities $\rho_0$ and $\rho_1$, the new solution is simply the time reversal of the previous one, cf.\ the title ``On the reversal of natural laws" of \cite{S1}. These are the remarkable analogies to quantum mechanics which appeared to Schr\"odinger very worth of reflection. But, wait a minute: 
When Schr\"{o}dinger poses his question, the very foundations of probability theory are still missing, the notion of stochastic process has not been introduced yet. Although Wiener had given a rigorous construction of Wiener measure in 1923 \cite{Wiener}, there was hardly any theory of continuous parameter stochastic processes in early 1930s. Many relevant results such as Sanov's theorem  and multiplicative functionals transformations of Markov processes \cite{IW} were not available. How could Schr\"odinger formulate and, to a large extent, solve such an abstract problem in the early 1930's? Schr\"{o}dinger, in his countryman Boltzmann's style, discretizes space to be able to compute by the De Moivre-Stirling formula the most likely joint initial-final distribution very much like Boltzmann had done in 1877 \cite{BOL}. 

In \cite{PTT}, the Schr\"odinger bridge problem was considered in the case when only samples of the boundary marginals are available. A numerical method has been developed which has potential to work in high-dimensional settings employing constrained maximum likelihood in place of the nonlinear boundary coupling and importance sampling to propagate $\varphi$ and $\hat{\varphi}$. Another paper dealing with a similar problem is \cite{BHDJ}.

\subsection{Stochastic control formulation}
In 1975 Jamison \cite{Jam2} showed that the solution of the Schr\"{o}dinger bridge problem is an h-path process in the sense of Doob \cite{Doob}, \cite[p. 566]{Doob2}. Indeed, dividing both sides of (\ref{optimaljoint}) by $\rho_0(x)$ (assumed everywhere positive), we get
\begin{equation}\label{multfunct}
p^*(0,x,1,y)=\frac{1}{\varphi(x)}p(0,x,1,y)\varphi(y),
\end{equation}
where $\varphi$, in Doob's language, is {\em space time harmonic} satisfying
\begin{equation}\label{space-time}
\frac{\partial\varphi}{\partial t}+\frac{1}{2}\Delta\varphi=0.
\end{equation}
The solution is namely obtained from the prior distribution via a multiplicative functional transformation of Markov processes \cite{IW}. It is worthwhile to mention that Francesco Guerra has connected such  function  to the {\em importance function} of neutron transport theory \cite{Guerra87}.

The connection between Schr\"odinger bridges and stochastic control, clearly established in \cite{DP}, was prepared by work in the latter field on the so-called logarithmic transformation of parabolic differential equations \cite{Fle1,Fle2,Fle3,Hol, Kar,Mit,Bl} as well as by work in mathematical physics \cite{Guerra87,Zam}.
The Schr\"{o}dinger bridge problem can be turned, thanks to {\em Girsanov's theorem},  into a stochastic calculus of variations problem \cite{Nag,Wak0,DGW,AN} which, in turn, can be reformulated in the language of stochastic control \cite{DP,DPP,PW,FHS}. Let $P\in\cal D$ be a finite-energy diffusion with forward differential (\ref{fordiff}). Then, by Girsanov's theorem \cite{KS},
\begin{equation}\label{girsanov}
\begin{split}
\log\frac{dP}{dW}&=\log\frac{\rho^P_0(X_0)}{\rho^{W}_0(X_0)}+\int_0^{1}\beta_t dX_t-\int_0^1\frac{1}{2}\|\beta_t\|^2dt,\quad P \; {\rm a.s.}, \\
&=\log\frac{\rho^P_0(X_0)}{\rho^{W}_0(X_0)}+\int_{0}^{1}\beta_t dW_t+\int_{0}^{1}\frac{1}{2}\|\beta_t\|^2dt,\quad P \; {\rm a.s.}.
\end{split}
\end{equation}
By the finite energy condition (\ref{finenergy}), 
\[Y_t:=\int_{0}^{t}\beta_{\tau} dW_{\tau}
\]
is a martingale and has therefore constant expectation. Since $Y_0=0$, we have
\[\E_P\left[\int_{0}^{1}\beta_t dW_t\right]=0.
\]
Then (\ref{girsanov}) yields
\begin{equation}\D(P\|W)=\E_P\left[\log\frac{dP}{dW}\right]=\D(\rho_{0}\|\rho_{0}^{W})+\E_P\left[\int_{0}^{1}\frac{1}{2}\|\beta_t\|^2dt\right].\label{girsrepres}
\end{equation}
Note that $\D(\rho_{0}\|\rho_{0}^{W})$ is constant over ${\cal D}(\rho_0,\rho_1)$. We then get a stochastic control formulation. Problem \ref{bridge} (when the prior has variance $\epsilon$) is equivalent to
\begin{problem}\label{stochcontr}
\begin{equation}
\begin{split}
&{\rm Minimize}_{u\in{\cal U}}\; J(u)=\E\left[\int_0^1\frac{1}{2\epsilon}\|u_t\|^2dt\right],\\
{\rm subject\; to}\; &dX_t=u_tdt+\sqrt{\epsilon} dW_t, \quad X_0\sim\rho_0(x), \quad X_1\sim\rho_1(y),
\end{split}
\end{equation}
where the family ${\cal U}$ consists of adapted, finite-energy control functions.
\end{problem}

The optimal control is of the feedback type
\begin{equation}\label{optcontr}
u^*(t,x)=\epsilon\nabla\log\varphi(t,x),
\end{equation}

where $(\varphi,\hat{\varphi})$ solve the Schr\"odinger system
\begin{subequations}\label{eq:Schrsystem}
\begin{eqnarray}\label{eq:SchrsystemA}
\frac{\partial\varphi}{\partial t}+\frac{\epsilon}{2}\Delta\varphi&=&0,\\
\frac{\partial\hat{\varphi}}{\partial t}-\frac{\epsilon}{2}\Delta\hat{\varphi}&=&0 \label{eq:SchrsystemB},\\\varphi(0,x)\cdot\hat{\varphi}(0,x)&=&\rho_0(x),\label{eq:SchrsystemC}\\ \varphi(1,y)\cdot\hat{\varphi}(1,y)&=&\rho_1(y).\label{eq:SchrsystemD}
\end{eqnarray}
\end{subequations}

\subsection{Fluid-dynamic formulation}

Problem \ref{stochcontr} leads immediately to the following fluid dynamic problem
\begin{problem}\label{fluid}
\begin{subequations}\label{eq:BBS}
\begin{eqnarray}\label{BBS1}&&\inf_{(\rho,u)}\int_{\R^n}\int_{0}^{1}\frac{1}{2}\|u(t,x)\|^2\rho(t,x)dtdx,\\&&\frac{\partial \rho}{\partial t}+\nabla\cdot(u\rho)-\frac{\epsilon}{2}\Delta\rho=0,\label{BBS2}\\&& \rho(0,x)=\rho_0(x), \quad \rho(1,y)=\rho_1(y).\label{Boundary}
\end{eqnarray}\end{subequations}
where $u(\cdot,\cdot)$ varies over continuous functions on $[0,1]\times \R^n$.
\end{problem}

\begin{remark}\label{deterministicflow}{\em Contrary to what is often stated in the literature (see e.g. \cite{LegLi})\footnote{We contributed ourselves to this confusion in \cite[Section 5]{CGP4}.}, Problem \ref{fluid} is not equivalent to Problems \ref{bridge}, \ref{static} and \ref{stochcontr} in that it only reproduces the optimal entropic interpolating flow $\{\rho^*(t,\cdot); 0\le t\le 1\}$. Information about correlations at different times and smoothness of the trajectories in the support of the measure $P^*$ is here lost. Indeed, let $(\rho^*,u^*)$ be optimal for Problem \ref{fluid} and define the {\em current velocity field} \cite{N1}
\begin{equation}\label{current}
v^*(t,x):=u^*(t,x)-\frac{\epsilon}{2}\nabla\log\rho^*(t,x).
\end{equation}
Assume that $v^*$ guarantees existence and uniqueness of the following initial value problem on $[0,1]$ for any deterministic initial condition
\[
\dot{X}_t=v^*(t,X_t), \quad X_0\sim
 \rho_0dx.
\]
Then the probability density $\rho(t,x)$ of $X_t$ satisfies the continuity equation 
\[\frac{\partial \rho}{\partial t}+\nabla\cdot(v^*\rho)=0
\]
as well as (\ref{BBS2}) with the same initial condition and therefore coincides with $\rho^*(t,x)$. \hfill $\Box$ }
\end{remark}

It appears that, as $\epsilon\searrow 0$, the solution to this problem converges to the solution of the Benamou-Brenier Optimal Mass Transport problem \cite{BB}. This is indeed the case  \cite{Mik, mt, MT,leo,leo2}. The  analysis in Remark \ref{deterministicflow}, however, suggests that an alternative fluid-dynamic problem characterization of the entropic interpolation flow $\{\rho^*(t,\cdot); 0\le t\le 1\}$ may be possible.
Indeed, such alternative time-symmetric problem was derived in \cite{CGP4}, see also \cite{GLR}: 
\begin{problem}\label{fluidsymm}
\begin{subequations}\label{eq:BBSS}
\begin{eqnarray}\label{BBSS1}&&\inf_{(\rho,v)}\int_{\R^n}\int_{0}^{1}\left[\frac{1}{2}\|v(t,x)\|^2+ \frac{\epsilon^2}{8}\|\nabla\log\rho\|^2\right]\rho(t,x)dtdx,\\&&\frac{\partial \rho}{\partial t}+\nabla\cdot(v\rho)=0,\label{BBSS2}\\&& \rho(0,x)=\rho_0(x), \quad \rho(1,y)=\rho_1(y).\label{BBSS3}
\end{eqnarray}\end{subequations}
\end{problem}
The two criteria differ by a (scaled) Fisher information functional
\[{\mathcal I}(\rho)=\int\|\nabla\log\rho(t,x)\|^2\rho(t,x)dx
\]
while the Fokker-Planck equation has been replaced by the continuity equation. Both Problems \ref{fluid} and \ref{fluidsymm} can be thought of as regularizations of the Benamou-Brenier problem \cite{BB} and as dynamic counterparts of (\ref{eq:regularizedOT}) \cite{Mik, mt, MT,leo,leo2,CGP3,CGP4}. 
\begin{remark}\label{carlenfisher}{\em The problem of minimizing the Yasue \cite{Y} action (\ref{BBSS1}) under the continuity equation (\ref{BBSS2}) and boundary conditions (\ref{BBSS3}) was formulated by Carlen in \cite[p.131]{Carlen}. He was investigating there possible connections between OMT and Nelson's
stochastic mechanics. He wrote: ``... the Euler-Lagrange equations for it are not easy to understand". The solution to Carlen's problem was provided in  \cite[Section 5]{CGP4} through the fluid-dynamic problem associated to the Schr\"odinger bridge. Carlen's statement has caused several authors (see e.g. \cite{Leg}) to believe that Yasue in \cite{Y} had already discussed this problem. This is not true as what Yasue developed there, using Nelson's integration by parts formula for semimartingales \cite[Theorem 11.12]{N1}, was a stochastic calculus of variations which may be viewed as the ``particle" form  of Hamilton principle in stochastic mechanics. The fluid dynamic counterpart of this principle was developed in \cite{GuMo}, see also \cite{N}, but with a different action (there is a minus in front of the Fisher information functional)!   For a clarification of the connection between Schr\"odinger bridges, Nelson's stochastic mechanics and Bohm's stochastic mechanics \cite{Bo,BV,BH} through calculus of variations in the Wasserstein space see \cite[Section VI]{CP2}.}\hfill$\Box$
\end{remark}

\subsection{General prior}
All of what we have seen in this section can be generalized to a general Markovian prior measure. Indeed, suppose that $\bar{P}$ is a Markov finite energy diffusion \cite{Foe}. Under $\bar{P}$, the canonical coordinate
process has a forward It\^o differential

\[
dX_t=f(t,X_t)dt+\sqrt{\epsilon}dW_t.
\]
Let $\bar{p}(s,x,t,y)$ be the corresponding transition density. Consider the Schr\"{o}dinger bridge problem with prior $\bar{P}$, namely
\begin{problem}\label{generalbridge}
\begin{equation}{\rm Minimize}\quad \D(P\|\bar{P}) \quad {\rm over} \quad P\in{\cal D}(\rho_0,\rho_1).
\end{equation} 
\end{problem}

Then, a decomposition like (\ref{decomposition}) turns the problem into Problem \ref{static}. In particular, (\ref{representation}) is replaced by
\begin{equation}\label{representation2}
P^*(\cdot)=\int_{\R^n\times\R^n} \bar{P}_{xy}(\cdot)\rho^*_{01}(dxdy).
\end{equation}
 The various dynamic formulations get modified accordingly. Problem \ref{stochcontr} becomes
\begin{problem}\label{genpriorstochcontr}
\begin{equation}
\begin{split}
&\min_{u\in{\cal U}}\; J(u)=\E\left[\int_0^1\frac{1}{2 \epsilon}\|u_t\|^2dt\right],\\
 &dX_t=\left[f(t,X_t)+u_t\right]dt+ \sqrt{\epsilon} dW_t, \quad X_0\sim\rho_0(x)dx, \quad X_1\sim\rho_1(y)dy,
\end{split}
\end{equation}
where the family ${\cal U}$ consists of adapted, finite-energy control functions.
\end{problem}

Then (\ref{optcontr}) is replaced by
\begin{equation}\label{generealoptcontr}
u^*_t=\epsilon\nabla\log\varphi(t,X_t),
\end{equation}
where $(\varphi,\hat{\varphi})$ now solve the Schr\"odinger system
\begin{subequations}\label{eq:genSchrsystem}
\begin{eqnarray}\label{eq:genSchrsystemA}
&&\frac{\partial\varphi}{\partial t}+f\cdot\nabla\varphi+\frac{{ \epsilon}}{2}\Delta\varphi=0,\\
&&\frac{\partial\hat{\varphi}}{\partial t}+\nabla\cdot(f\hat{\varphi})-\frac{{ \epsilon}}{2}\Delta\hat{\varphi}=0 \label{eq:genSchrsystemB},\end{eqnarray}
\end{subequations}
with the same boundary conditions. The fluid dynamic formulations also change accordingly. Let
\[\bar{v}(t,x):=f(t,x)-\frac{{\epsilon}}{2}\nabla\log\bar{\rho}(t,x)
\]
be the current velocity of the prior process. Then
Problem \ref{fluidsymm} becomes
\begin{subequations}\label{FDproblem}
\begin{align}\label{SBB1}
&\inf_{(\rho,v)}\int_{\R^n}\int_{0}^{1}\left[\frac{1}{2}\|v(t,x)-\bar{v}(t,x)\|^2+\frac{{\epsilon^2}}{8}\left\|\nabla\log\frac{\rho(t,x)}{\bar{\rho}(t,(x)}\right\|^2\right]\rho(t,x)dx dt,\\
&{\frac{\partial \rho}{\partial t}+\nabla\cdot(v\rho)=0},\label{SBB2}\\ &\rho_0=\nu_0, \quad \rho_1=\nu_1,\label{SBB3}
\end{align}
\end{subequations}
where the two criteria differ by a {\em relative Fisher information} term. 
%

The theory can be further extended to the case of a general diffusion coefficient (anisotropic diffusion)  and to the situation where the prior Markovian measure features creation and/or killing, so that the probability mass is not preserved at each time \cite{DGW,AN,Wak,CGP2}. Both cases are treated in Section \ref{anisotropic}.

\begin{remark}\label{Cdynvsstatic} {\em We like to stress here one aspect of (\ref{representation}), namely that this is just a {\em representation} of the solution $P^*$ even if we have been able to solve Problem \ref{static}. This representation needs to be supplemented with the multiplicative functional relation between transition densities
\begin{equation}\label{multfunct2}
p^*(s,x,t,y)=\frac{1}{\varphi(s,x)}\bar{p}(s,x,t,y)\varphi(t,y), \quad 0\le s<t\le 1,
\end{equation}
and the associated relation between drifts (\ref{generealoptcontr}). We shall come back to this point when discussing the discrete state space case.
}\hfill$\Box$
\end{remark}

\section{Stochastic control and 
general bridge problems}\label{anisotropic}

The classical probabilistic literature on the Schr\"odinger bridge problem only considers the case in which the noise and control matrices are constant, diagonal and nonsingular (often the identity matrix), see Problem \ref{genpriorstochcontr}. This permits, in particular, to employ the Girsanov transformation (\ref{girsanov}). In many applications, however, such a requirement represents a serious limitation. Noise may not affect all components of the state vector like in the controlled oscillator (\ref{OUC}) and/or the control channel may be dictated by technological constraints such as in some engineering applications. Central in such applications are problems for controlled Gauss-Markov models\footnote{In a twin paper \cite{CGP14}, we shall review the by now vast literature \cite{HS1,HS2,SkeIke, GS,ZGS,CGP1,CGP3,CGP5,CGP7,CGP8,CGP11,HW,Bak1, Bak2, Bak3, Bak4, Bak5,GT1,RT1,RT2,OGT,OT,ROT,OT2,ACGP,CCGP}  on optimal steering of probability distributions for Gauss-Markov models in continuous and discrete time, over a finite or infinite time horizon, with or without state and/or control constraints and applications.}: Find an adapted control $u$ minimizing
\[
J(u)=\E\left\{\int_0^1\|u(t)\|^2 \,dt\right\},
\]
among those which achieve the transfer
\begin{eqnarray}\nonumber
dx(t)=A(t)x(t)dt+B(t)u(t)dt+B_1(t)dW_t,\\ x(0)\sim\mathcal N(m_0,\Sigma_0),\quad x(1)\sim\mathcal N(m_1,\Sigma_1).\nonumber
\end{eqnarray}
It is therefore important to pose the minimum energy steering problem for  probability distributions in a more general setting where the original large deviations/maximum entropy motivation might be lost. This will make salient the advantage of the control formulation which always makes  perfect sense and in which the free (uncontrolled) evolution plays the role of the prior model. We proceed in this section to see how much of the fluid-dynamic formulation  can be extended to the general case \cite{CGP2}.

We consider a cloud of particles with density $\rho(t,x)$, $x\in\R^n$, which evolves according to the transport-diffusion equation
\begin{equation}\label{Fokker-Planck}
\frac{\partial \rho}{\partial t}+\nabla\cdot(f(t,x)\rho)+V(t,x)\rho=\frac{1}{2}\sum_{i,j=1}^n\frac{\partial^2(a_{ij}(t,x)\rho)}{\partial x_i\partial x_j}, 
\end{equation}
with  $\rho(0,\cdot)=\rho_0(\cdot)$  a probability density. Differently from previous literature on connections to Feynman-Kac \cite{Wak,PW, DPP} and following our desire to be able to model inertial particles as in the previous section, we assume that the matrix $a(t,x)=[a_{ij}(t,x)]_{i,j=1}^{m}$ is only positive semidefinite of constant rank on $[0,1]\times\R^n$ with
\[
a_{ij}(t,x)=\sum_k\sigma_{ik}(t,x)\sigma_{kj}(t,x)
\]
for a matrix $\sigma(t,x)=[\sigma_{ik}(t,x)]\in\R^{n\times m}$ of constant rank $m\leq n$. Notice that the presence of $V(t,x)\ge 0$ allows for the possibility of loss of mass, so that the integral of $\rho(t,x)$ over $\R^n$ is not necessarily constant. This flexibility permits to model particles satisfying
\begin{equation}\label{eq:nonlinearSDE}
dX_t=f(t,X_t)dt+\sigma(t,X_t)dW_t,
\end{equation}
which are absorbed at some rate by the medium in which they travel or, if the sign of $V$ is negative, created out of this same medium \cite[p. 272]{KT}. We assume that $f$ and $\sigma$ are smooth and that the operator
$$L=\sum_{i,j=1}^na_{ij}(t,x)\partial_{x_i}\partial_{x_j}+\sum_{j=1}^nf_j(t,x)\partial_{x_j}-\partial_t
$$
is  {\em hypoelliptic} satisfying H\"{o}rmander's condition \cite{H,OR}. Hypoelliptic diffusions model important processes in many branches of science: Ornstein-Uhlenbeck stochastic oscillators, Nyquist-Johnson circuits with noisy resistors, in image reconstruction based on Petitot's model of neurogeometry of vision \cite{BCGR}, etc. The ``reweighing" of the original measure of the Markov process (\ref{eq:nonlinearSDE}) when $V$ is unbounded is a delicate issue and can be accomplished via the Nagasawa transformation, see \cite[Section 8B]{Wak} for details.

Let us now suppose that (\ref{Fokker-Planck}) represents a {\em prior} evolution and that at time $t=1$ we measure an empirical probability density $\rho_1(\cdot)\neq \rho(1,\cdot)$ as dictated by \eqref{Fokker-Planck}.
Thus, the model (\ref{Fokker-Planck}) is thought not to be consistent with the estimated end-point empirical distribution. Yet, suppose that one has reasons to believe\footnote{Alternative reasoning based on Gibbs conditioning principle, as before, may be possible.} that the actual evolution was close to the nominal one and that only the actual drift field is different, perturbed by an additive term $\sigma(t,x) u(t,x)$, i.e., 
\[
 v(t,x)=f(t,x)+\sigma(t,x) u(t,x).
 \]
Notice that the control variables, which may be fewer than $n$, act through the same channels of the diffusive part. The assumption that stochastic excitation and control enter through the same ``channels'' is natural in certain applications as explained and treated in \cite{CGP1} for linear diffusions. The case were these channels may differ is considered in~\cite{CGP3}.

Taking (\ref{Fokker-Planck}) as a reference evolution and given the terminal probability density $\rho_1$, we are led to consider the following generalization of Problem \ref{fluid}:
\begin{subequations}\label{FDProblem}
\begin{eqnarray}
\label{FD1}
&&\inf_{(\rho,u)}\int_{\R^n}\int_{0}^{1}\left[\frac{1}{2}\|u\|^2+V(t,x)\right]\rho(t,x)dtdx,\\
&&\frac{\partial \rho}{\partial t}+\nabla\cdot((f+\sigma u)\rho)=\frac{1}{2}\sum_{i,j=1}^n\frac{\partial^2\left(a_{ij}\rho\right)}{\partial x_i\partial x_j},\label{FD2}\\
&&\rho(0,x)=\rho_0(x), \quad \rho(1,x)=\rho_1(x).\label{FD3}
\end{eqnarray}
\end{subequations}

When $[a_{ij}]$ does depend on $x$, the connection to a relative entropy problem on path space is apparently available only under rather restrictive assumptions such as uniform boundness of $a$ \cite[Section 5]{Fischer}. Problem (\ref{FDProblem}) thus appears as a generalization of Problem \ref{fluid} which is not necessarily connected to  a large deviations problem. In  \cite{CH}, a special case of Problem \ref{FDProblem} has been considered ($V\equiv 0$, $\sigma(t,x)=B(t)$). For certain classes of drifts $f$, a Wasserstein proximal algorithm has been used to numerically solve a pair of Initial Value Problems equivalent to \eqref{generalizedschrodinger} below. This result has been generalized to the situation where there are hard state constraints (the diffusion paths have to remain in a bounded domain) in \cite{CH2}. We mention here that chance state constraints were considered for the covariance control problem in \cite{OGT}. Moreover, input constraints for discrete and continuous time models have been considered in \cite{Bak2,OT2}. In \cite{ROT2,YCTC}, a nonlinear covariance control problem was studied  by iteratively solving an approximate linearized problem and by differential dynamic programming, respectively.

We provide below in some detail the variational analysis for (\ref{FDProblem}) which can be viewed as a generalization of that of  Section \ref{SCOMT}. Let $\cP_{\rho_0\rho_1}$ be the family of flows of probability densities
satisfying (\ref{FD3}). Let $\mathcal U$ be a family of continuous  feedback control laws $u(\cdot,\cdot)$. Consider the unconstrained minimization of the Lagrangian over $\cP_{\rho_0\rho_T}\times\mathcal U$
\begin{eqnarray}\nonumber
\mathcal L(\rho,u,\lambda)=\int_{\R^n}\int_{0}^{1}\left[\left(\frac{1}{2}\|u(t,x)\|^2+V(t,x)\right)\rho
+\lambda(t,x)\left(\frac{\partial \rho}{\partial t}+\nabla\cdot((f+\sigma u)\rho)\right.\right.\\\left.\left.-\frac{1}{2}\sum_{i,j=1}^n\frac{\partial^2}{\partial x_i\partial x_j}\left(a_{ij}(t,x)\rho\right)\right)\right]dtdx,\nonumber
\end{eqnarray}
where $\lambda$ is a $C^1$ Lagrange multiplier. After integration by parts, assuming that limits for $x\rightarrow\infty$ are zero, and observing that the boundary values are constant over $\cP_{\rho_0\rho_1}$, we get the problem
\begin{equation}\label{Lagrangian2}
\inf_{(\rho,u)\in\mathcal X_{\rho_0\rho_1}\times\mathcal U}\int_{\R^n}
\int_{0}^{1}\left[\frac{1}{2}\|u\|^2+V-\left(\frac{\partial \lambda}{\partial t}+(f+\sigma u)\cdot\nabla\lambda+\frac{1}{2}\sum_{i,j=1}^n a_{ij}\frac{\partial^2\lambda}{\partial x_i\partial x_j}\right)\right]\rho dtdx
\end{equation}
Pointwise minimization of the integrand with respect to $u$ for each fixed flow of probability densities $\rho$ gives
\begin{equation}\label{prioroptcond}
u^*_{\rho}(t,x)=\sigma'\nabla\lambda(t,x).
\end{equation}
Plugging this form of the optimal control into (\ref{Lagrangian2}), we get the functional of $\rho\in\cP_{\rho_0\rho_1}$
\begin{equation}\label{priorlagrangian3}
J(\rho,\lambda)=\int_{\R^n}\int_{0}^{1}\left[\frac{\partial \lambda}{\partial t}+f\cdot\nabla\lambda+\frac{1}{2}\nabla\lambda\cdot a\nabla\lambda-V+\frac{1}{2}\sum_{i,j=1}^n a_{ij}\frac{\partial^2\lambda}{\partial x_i\partial x_j}\right]\rho dtdx.
\end{equation}
We then have the following result: 
\begin{proposition}If $\rho^*$ satisfies
\begin{equation}\label{Optev}
\frac{\partial \rho}{\partial t}+\nabla\cdot((f+a\nabla\lambda)\rho)=\frac{1}{2}\sum_{i,j=1}^n\frac{\partial^2\left(a_{ij}\rho\right)}{\partial x_i\partial x_j},
\end{equation}
with $\lambda$ a solution of the HJB-like equation
\begin{equation}\label{HJB}
\frac{\partial \lambda}{\partial t}+f\cdot\nabla\lambda+\frac{1}{2}\sum_{i,j=1}^n a_{ij}(t,x)\frac{\partial^2\lambda}{\partial x_i\partial x_j}+\frac{1}{2}\nabla\lambda\cdot a\nabla\lambda-V=0,
\end{equation}
and $\rho^*(1,\cdot)=\rho_1(\cdot)$, then the pair $\left(\rho^*,u^*\right)$ with $u^*=\sigma'\nabla\lambda$ is a solution of  (\ref{FDProblem}).
\end{proposition}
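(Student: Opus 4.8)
The plan is to run the verification (weak--duality) argument already carried out in Sections~\ref{SCOMT} and~\ref{OMTprior}, now keeping track of the possibly degenerate diffusion matrix $a$, the potential $V$, and the control channel $\sigma$. First I would attach to the Fokker--Planck constraint (\ref{FD2}) the $C^1$ multiplier $\lambda$, forming the Lagrangian $\mathcal L(\rho,u,\lambda)$ displayed just before (\ref{Lagrangian2}): along any \emph{admissible} pair $(\rho,u)$, i.e.\ one satisfying (\ref{FD2})--(\ref{FD3}), the multiplier term is zero, so the objective in (\ref{FD1}) equals $\mathcal L(\rho,u,\lambda)$. Integrating by parts in $t$ and in $x$ so as to move every derivative off $\rho$ and onto $\lambda$ produces the time--boundary term $\int_{\R^n}[\lambda(1,x)\rho_1(x)-\lambda(0,x)\rho_0(x)]\,dx=:c_\lambda$, which is a \emph{fixed constant} over the admissible set because the endpoint densities are prescribed by (\ref{FD3}), together with spatial boundary terms that are discarded under the standing assumption that $\lambda\rho$ and the attendant products $a_{ij}\rho\,\partial_{x_j}\lambda$, etc., vanish as $\|x\|\to\infty$. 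The outcome is exactly the reduced functional (\ref{Lagrangian2}).

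Next I would minimize (\ref{Lagrangian2}) over the control. At each $(t,x)$ the integrand is a strictly convex quadratic in $u$, so pointwise minimization yields $u^*_\rho(t,x)=\sigma'(t,x)\nabla\lambda(t,x)$ as in (\ref{prioroptcond}); this $u^*_\rho$ is continuous, hence lies in the admissible class $\mathcal U$. Substituting it and using $\sigma\sigma'=a$ collapses the bracket to $-\bigl(\partial_t\lambda+f\cdot\nabla\lambda+\tfrac12\nabla\lambda\cdot a\nabla\lambda+\tfrac12\sum_{i,j}a_{ij}\partial_{x_i}\partial_{x_j}\lambda-V\bigr)$, that is, to minus the integrand of $J(\rho,\lambda)$ in (\ref{priorlagrangian3}). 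Hence, for \emph{every} admissible $(\rho,u)$,
\[
\int_{\R^n}\int_0^1\Bigl[\tfrac12\|u\|^2+V\Bigr]\rho\,dt\,dx=\mathcal L(\rho,u,\lambda)\ge c_\lambda-J(\rho,\lambda),
\]
and if $\lambda$ solves the HJB--type equation (\ref{HJB}) the integrand of $J$ vanishes identically, so $J(\rho,\lambda)\equiv 0$ and the right--hand side reduces to the constant $c_\lambda$, a lower bound for (\ref{FDProblem}) that is independent of the admissible pair chosen.

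It remains to exhibit an admissible pair attaining $c_\lambda$. By hypothesis $\rho^*$ solves (\ref{Optev}) with $\rho^*(0,\cdot)=\rho_0$ and $\rho^*(1,\cdot)=\rho_1$; since $f+a\nabla\lambda=f+\sigma(\sigma'\nabla\lambda)=f+\sigma u^*$, equation (\ref{Optev}) is precisely (\ref{FD2}) written for $(\rho^*,u^*)$ with $u^*=\sigma'\nabla\lambda$, so this pair is admissible. Running the identity of the previous paragraph for $(\rho^*,u^*)$ and using $J(\rho^*,\lambda)=0$ gives cost exactly $c_\lambda$, whence $(\rho^*,u^*)$ is optimal for (\ref{FDProblem}). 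The genuinely delicate point is not the algebra but the analytic legitimacy of the steps: one must secure enough regularity and decay of $\rho$, of $\rho^*$, and of $\lambda$ for the integrations by parts to hold with vanishing spatial boundary contributions (compare the proviso in the preceding proposition that $\lambda\rho^*$ vanish at infinity), and one must justify interchanging the pointwise minimization over $u$ with the space--time integral within the class $\mathcal U$ of continuous feedback laws. As in the earlier propositions these are taken as standing hypotheses, and the statement is a \emph{sufficient} (verification) condition, not a claim that such a $\lambda$ exists.
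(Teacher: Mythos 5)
Your proof is correct and follows the same Lagrangian/verification argument the paper uses in Section~\ref{anisotropic}: form the Lagrangian with the Fokker--Planck constraint, integrate by parts (discarding the constant boundary term $c_\lambda$ and assuming spatial decay), minimize pointwise over $u$ to get $u^*=\sigma'\nabla\lambda$ and the reduced functional $J(\rho,\lambda)$, and invoke the HJB equation to kill $J$. You have merely made the weak-duality structure (every admissible pair is bounded below by $c_\lambda$; the proposed pair attains it) a bit more explicit than the paper does, and correctly flagged the regularity/decay hypotheses that the paper leaves implicit.
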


Of course, the difficulty lies with the nonlinear equation (\ref{HJB}) for which no boundary value is available. Together, $\rho(t,x)$ and $\lambda(t,x)$ satisfy the coupled equations (\ref{Optev})-(\ref{HJB}) and the split boundary conditions for $\rho(t,x)$ in \eqref{FD3}.
Let us, however, define 
$$\varphi(t,x)=\exp[\lambda(t,x)], \quad (t,x)\in [0,1]\times\R^n. 
$$
If $\lambda$ satisfies (\ref{HJB}), we get that $\varphi$ satisfies the {\em linear} equation
\begin{equation}\label{harmonic}
\frac{\partial \varphi}{\partial t}+f\cdot\nabla\varphi+\frac{1}{2}\sum_{i,j=1}^n a_{ij}(t,x)\frac{\partial^2\varphi}{\partial x_i\partial x_j}=V\varphi.
\end{equation}
Moreover, for $\rho$ satisfying (\ref{Optev}) and $\varphi$ satisfying (\ref{harmonic}), let us define
$$\hat{\varphi}(t,x)=\frac{\rho(t,x)}{\varphi(t,x)},\quad (t,x)\in[0,1]\times\R^n. 
$$
Then a long but straightforward calculation  shows that $\hat{\varphi}$ satisfies the original  equation (\ref{Fokker-Planck}). Thus, we have the system of linear PDE's
\begin{subequations}\label{generalizedschrodinger}\begin{eqnarray}\label{shroedinger1}
\hspace*{-.3in}\frac{\partial \varphi}{\partial t}+f\cdot\nabla\varphi+\frac{1}{2}\sum_{i,j=1}^n a_{ij}\frac{\partial^2\varphi}{\partial x_i\partial x_j}&=&V\varphi,\\
\hspace*{-.3in}\frac{\partial \hat{\varphi}}{\partial t}+\nabla\cdot(f\hat{\varphi})-\frac{1}{2}\sum_{i,j=1}^n\frac{\partial^2\left(a_{ij}\hat{\varphi}\right)}{\partial x_i\partial x_j}&=&-V\hat{\varphi},
\label{schroedinger2}\end{eqnarray}
nonlinearly coupled through their boundary values as
\begin{equation}\label{BND}
\varphi(0,\cdot)\hat{\varphi}(0,\cdot)=\rho_0(\cdot),\quad \varphi(1,\cdot)\hat{\varphi}(1,\cdot)=\rho_1(\cdot).
\end{equation}
\end{subequations}
Equations (\ref{shroedinger1}-\ref{BND}) constitute a {\em generalized Schr\"{o}dinger system}. We have therefore established the following result.
\begin{theorem}Let $(\varphi(t,x),\hat{\varphi}(t,x))$ be nonnegative functions satisfying (\ref{shroedinger1})-(\ref{BND}) for $(t,x)\in\left([0,1]\times\R^n\right)$. Suppose $\varphi$ is everywhere positive. Then the pair $\left(\rho^*,u^*\right)$ with 
\begin{subequations}
\begin{eqnarray}\label{optlaw}
\hspace*{-.3in}u^*(t,x)&=&\sigma'\nabla\log\varphi(t,x),\\
\hspace*{-.3in}\frac{\partial \rho}{\partial t}+\nabla\cdot((f+a\nabla\log\varphi)\rho)&=&\frac{1}{2}\sum_{i,j=1}^n\frac{\partial^2\left(a_{ij}\rho\right)}{\partial x_i\partial x_j},
\label{optevolution}\end{eqnarray} 
\end{subequations}
is a solution of  (\ref{FDProblem}).
\end{theorem}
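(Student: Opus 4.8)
The plan is a direct verification: assuming a nonnegative pair $(\varphi,\hat\varphi)$ solving the generalized Schr\"odinger system (\ref{shroedinger1})--(\ref{BND}) with $\varphi>0$, show that the induced pair $(\rho^*,u^*)$, with $\rho^*:=\varphi\,\hat\varphi$, is admissible for (\ref{FDProblem}) and attains a lower bound on the cost valid for \emph{all} admissible pairs. This follows the Lagrangian route already used in Section \ref{SCOMT} and in the Proposition preceding the theorem; existence of such a pair $(\varphi,\hat\varphi)$ is not addressed here and is a separate, harder matter. First I would carry out the logarithmic transformation: since $\varphi>0$ everywhere, put $\lambda:=\log\varphi$, and using $\partial_t\varphi=\varphi\,\partial_t\lambda$, $\nabla\varphi=\varphi\,\nabla\lambda$, $\partial_{x_i}\partial_{x_j}\varphi=\varphi(\partial_{x_i}\partial_{x_j}\lambda+\partial_{x_i}\lambda\,\partial_{x_j}\lambda)$, dividing (\ref{shroedinger1}) by $\varphi$ yields that $\lambda$ solves the HJB-like equation (\ref{HJB}). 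Consequently the bracket in the reduced functional $J(\rho,\lambda)$ of (\ref{priorlagrangian3}) is identically zero, so $J(\rho,\lambda)\equiv 0$ for every flow $\rho\in\cP_{\rho_0\rho_1}$, and the pointwise-in-$(t,x)$ minimizer of the integrand of (\ref{Lagrangian2}) is $u^*_\rho=\sigma'\nabla\lambda=\sigma'\nabla\log\varphi=u^*$.

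Next I would establish admissibility of $(\rho^*,u^*)$. Set $\rho^*:=\varphi\,\hat\varphi$. Because $\hat\varphi$ solves (\ref{schroedinger2}) and $\varphi$ solves (\ref{shroedinger1}), a direct product-rule computation --- expand $\partial_t(\varphi\hat\varphi)$, $\nabla\cdot(f\varphi\hat\varphi)$ and $\sum_{i,j}\partial_{x_i}\partial_{x_j}(a_{ij}\varphi\hat\varphi)$, substitute the two PDEs, and note that the $+V\varphi\hat\varphi$ and $-V\varphi\hat\varphi$ contributions cancel --- shows that $\rho^*$ satisfies (\ref{optevolution}); since $\sigma u^*=\sigma\sigma'\nabla\log\varphi=a\nabla\log\varphi$, this is exactly the Fokker--Planck constraint (\ref{FD2}) with control $u^*$. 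The endpoint conditions (\ref{FD3}) are immediate from the boundary coupling (\ref{BND}): $\rho^*(0,\cdot)=\varphi(0,\cdot)\hat\varphi(0,\cdot)=\rho_0$ and $\rho^*(1,\cdot)=\varphi(1,\cdot)\hat\varphi(1,\cdot)=\rho_1$. Hence $\rho^*\in\cP_{\rho_0\rho_1}$ and $(\rho^*,u^*)$ is admissible for (\ref{FDProblem}).

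For optimality, let $(\rho,u)$ be any admissible pair. Since $\rho$ satisfies (\ref{FD2}) with control $u$, the multiplier term in $\mathcal L(\rho,u,\lambda)$ vanishes, so the cost of $(\rho,u)$ equals $\mathcal L(\rho,u,\lambda)$; integrating by parts (with the standing assumption that $\lambda\rho$ and the associated fluxes vanish as $\|x\|\to\infty$), the cost becomes the constant boundary term $\int_{\R^n}[\lambda(1,\cdot)\rho_1-\lambda(0,\cdot)\rho_0]\,dx$ --- the same for every admissible pair --- plus the integral in (\ref{Lagrangian2}). Replacing $u$ there by the pointwise minimizer $u^*_\rho=\sigma'\nabla\lambda$ only decreases that integral, and at $u=\sigma'\nabla\lambda$ its integrand equals (up to an overall sign) the left-hand side of (\ref{HJB}) multiplied by $\rho$, hence is zero. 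Therefore the cost of every admissible pair is bounded below by the constant boundary term, and the candidate $(\rho^*,u^*)$, whose control is precisely $\sigma'\nabla\lambda$, attains it. Thus $(\rho^*,u^*)$ solves (\ref{FDProblem}).

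The only genuinely technical point is the product-rule identity used for admissibility (the ``long but straightforward calculation''): one must keep careful track of all first- and second-order terms generated by differentiating the product $\varphi\hat\varphi$ and confirm that the cancellation hinges on $\varphi$ and $\hat\varphi$ solving the \emph{adjoint} pair (\ref{shroedinger1})--(\ref{schroedinger2}) with opposite-signed potential terms. The remaining issues are analytic bookkeeping: hypoellipticity of $L$ and smoothness of $f,\sigma,V$ so that classical solutions exist and the chain-rule manipulations are legitimate, and the decay at infinity needed to justify the integrations by parts; one should also note that when $V\not\equiv 0$ the density $\rho^*$ need not have unit mass, consistently with the loss or creation of mass built into (\ref{Fokker-Planck}).
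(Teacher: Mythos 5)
Your proof is correct and takes essentially the same route as the paper's: perform the logarithmic change of variable $\lambda=\log\varphi$ to recover the HJB-like equation, verify by the product rule that $\rho^*=\varphi\hat\varphi$ is admissible and matches the boundary data, and invoke the Lagrangian/verification argument (the Proposition preceding the theorem) to conclude optimality. You make the lower-bound step slightly more explicit than the paper does (which leaves it implicit in the Proposition), but the decomposition of the cost and the pointwise minimization in $u$ are identical; the only thing the paper adds is the equivalent statement of the product-rule identity as ``$\hat\varphi=\rho/\varphi$ solves the forward equation,'' which is the same calculation read in the other direction.
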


Establishing existence and uniqueness (up to multiplication/division of the two functions by a positive constant) of the solution of the Schr\"{o}dinger system is extremely challenging even when the diffusion coefficient matrix $a$ is constant and nonsingular. Particular care is required in the case when $V$ is unbounded or singular \cite{AN}. Nevertheless, if the fundamental solution $p$ of (\ref{Fokker-Planck}) is everywhere positive on $\left([0,1]\times\R^n\right)$, which is to be expected in the hypoelliptic case \cite{H,Kli}, existence and uniqueness follows from a deep result of Beurling \cite{Beu} suitably extended by Jamison \cite[Theorem 3.2]{Jam1}, \cite[Section 10]{Wak}.

\begin{remark} {\em It is interesting to note that although \eqref{FDProblem} is not convex in $(\rho,u)$, it can be turned into a convex problem in a new set of coordinates $(\rho, m)$ where $m=\rho  u$, in which case it becomes
\begin{subequations}\label{FDproblem_convex}
\begin{eqnarray}
\label{FD1_convex}
&&\hspace*{-.3in}\inf_{(\rho,m)}\int_{\R^n}\int_{0}^{1}\left[\frac{1}{2}\frac{\|m\|^2}{\rho(t,x)}+V(t,x)\rho(t,x)\right]dtdx,\\
&&\hspace*{-.3in}\frac{\partial \rho}{\partial t}+\nabla\cdot(f\rho+\sigma m)=\frac{1}{2}\sum_{i,j=1}^n\frac{\partial^2\left(a_{ij}\rho\right)}{\partial x_i\partial x_j},\label{FD2_convex}\\
&&\hspace*{-.3in}\rho(0,\cdot)=\rho_0(\cdot), \quad \rho(1,\cdot)=\rho_1(\cdot).\label{FD3_convex}
\end{eqnarray}
\end{subequations}
This type of coordinate transformation has been effectively used in \cite{BB} in the context of optimal mass transport.}\hfill$\Box$
\end{remark}

\section{The atomic hypothesis, microscopes and stochastic oscillators}\label{AHMSO}

Richard Feynman, in his first lecture of the famous Caltech series, stated: ``If, in some cataclysm, all of scientific knowledge were to be destroyed, and only one sentence passed on to the next generation of creatures, what statement would contain the most information in the fewest words? I believe it is the atomic hypothesis that all things are made of atoms - little particles that move around in perpetual motion, attracting each other when they are a little distance apart, but repelling upon being squeezed into one another."

 
 The atomic hypothesis apparently originates with Democritus of Abdera (a colony of Miletus in nowadays Greek Thrace) and his mentor Leucippus. Democritus, according to the Greek historian Diogenes La\"{e}rtius, (Democritus, Vol. IX, 44) states:
 \[
 \alpha\rho\chi\acute{\alpha}\varsigma\; 
 \epsilon\iota\nu\alpha\iota\; 
 \tau\omega\nu\;
 \acute o\lambda\omega\nu\;
 {\alpha}\tau\acute{o}\mu o\upsilon\varsigma\;
 \kappa\alpha\acute{\iota}\; 
 \kappa\epsilon\nu\acute{o}\nu,\;
 \tau{\alpha}\; 
 \delta'\acute{\alpha}\lambda\lambda\alpha\;
 \pi\acute{\alpha}\nu\tau\alpha\;
 \nu\epsilon\nu o\mu\acute{\iota}\sigma\theta\alpha\iota
 \]
which (Robert Drew Hicks (1925)) can be translated as:
``The first principles of the universe are atoms and empty space; everything else is merely thought to exist. "

Democritus had (correctly) imagined that the wearing down of a wheel and the drying of  clothes would be due to small particles of wood and water, respectively, flying out of them. Then, he had the following philosophical argument  (according to Aristotle's report): If matter were infinitely divisible, only points with no extension would remain. But putting together an arbitrary number of them, we would still get things without extension\footnote{This kind of subtle argument has its roots in the Elean philosophical school of Parmenides and Zeno (the one of the turtle-Achilles paradox). According to Diogenes La\"{e}rtius, Leucippus was a pupil of Zeno. Elea, nowadays Velia, located approximatively $90$ miles south-east of Naples, was a Greek colony flourishing in the Vth century BCE.}.  Democritus was largely ignored in ancient Athens. He is said to have been disliked so much by Plato that the latter wished all of his books burned. It should also be stressed that, differently from the subsequent Plato and Aristoteles, the atomists Leucippus and Democritus, following the scientific rationalist philosophy associated with the Miletus school, wanted to investigate in the Vth century BCE the {\em causes} of natural phenomena rather than their {\em significance}$\,$! 

The long history of the atomic hypothesis intersects twice with the history of the microscope. The {\em first intersection} simply occurs  because the invention of this instrument at the beginning of the seventeen century made it possible to observe the very irregular motion of particles immersed in a fluid. These observations of ``animated" or ``irritable" particles were made, among others, by  van Leeuwenhoek, Buffon, Spallanzani long before the British botanist Robert Brown\footnote{In \cite[Chapter 2]{N1}, Edward Nelson writes about Robert Brown: ``His contribution was to establish Brownian motion as an important
phenomenon, to demonstrate clearly its presence in inorganic as well as
organic matter, and to refute by experiment facile mechanical explanations
of the phenomenon".}.  Many other important contributions to the atomistic theory had come before Brown, among others, from Dalton and Avogadro.  By 1877 the kinetic theory asserting that Brownian motion of  particles
is caused by bombardment by the molecules of the fluid was rather well established. 

In 1877 \cite{BOL}, Boltzmann poses and solves the first large deviation and relative maximum entropy problem in history where his ``loaded dice" are actually molecules! Nevertheless, the theory is not open to experimental verification as the velocity of Brownian particles cannot be measured accurately. At the beginning of the twentieth century, there are still prominent scientists such as Ostwald and Mach who are not convinced of the existence of atoms and molecules due to their ``positivistic philosophical attitude" (Albert Einstein \cite[p.49]{Schilpp}). In 1905 Einstein (and , independently, in 1906 Smoluchowski), proposes a p.d.e.\ model which is open to the experimental check of measuring the diffusion coefficient, thereby circumventing the need to measure the velocity of Brownian particles. This is accomplished in 1908 by Perrin \cite[Section 3]{perrin} some $2,350$ years after Democritus' argument!  Meanwhile, in 1908, a fundamental step in the direction of a large body of modern science is taken by Paul Langevin in \cite{Lan}. He argues that the equation of motion has the form
\begin{equation}\label{langevin}
m\frac{d^2x}{dt^2}=-\gamma \frac{dx}{dt}+ F, \quad \gamma>0,
\end{equation}
where $F$ is a  complementary force which maintains the agitation of the particle which the viscous resistance would stop without it. This is the first stochastic differential equation in history which is written down long before the relative probabilistic foundations and concepts (Wiener process) are introduced! After important contributions by a number of theoretical physicists and engineers such as Fokker and Planck, the Nyquist-Johnson model for RLC networks with noisy resistors in 1928 \cite{Johnson,Nyq}, we come in 1930/31 to the accepted  model for physical Brownian motion in a conservative force field \cite{UO,Kap,Chand}\footnote{Ornstein and Uhlenbeck only considered the case of a quadratic potential leading to a Gauss-Markov model in phase space.}, \cite[Chapter 10]{N1} given by the stochastic oscillator
\begin{subequations}\label{OU34}
\begin{eqnarray}\label{OU3}
dx(t)&=&\phantom{-}v(t)\,dt,\\\label{OU4}
dv(t)&=&-\beta v(t)\,dt-\frac{1}{m}\nabla V(x(t))dt+\sigma dW_t,
\end{eqnarray}
\end{subequations}
with $x(t_0)=x_0$ and $v(t_0)=v_0$ a.s., where $w(t)$ is a standard $3$-dimensional Wiener process and Einstein's {\em fluctuation-dissipation relation} holds
\begin{equation}\label{EFD}
\sigma^2=2kT\beta.
\end{equation}
Here $k$ is Boltzmann's constant and $T$ is the absolute temperature of the fluid. The original Einstein-Smoluchowski theory is the high-friction limit of this model \cite[Theorem 10.1]{N1}\footnote{Like the Aristotelian $F=mv$, often observed in nature, is the high friction limit of the Newtonian $F=ma$ .}. Condition (\ref{EFD}) guarantees the existence and the Boltzmann-Gibbs nature of an invariant measure for (\ref{OU34}) with density
\begin{equation}\label{BG}\rho_{BG}(x,v)=Z^{-1}\exp\left[-\frac{H(x,v)}{kT}\right], \mbox{ for } H(x,v)=\frac{1}{2}m\|v\|^2+V(x),
\end{equation}
and $Z$ is a suitable normalizing constant (partition function),
see \cite{HP1,CGPcooling} for a generalization of this result.

These models have since played a central role in many areas of science besides microphysics such as electric circuits \cite{Nyq}, astronomy \cite{Chand}, mathematical finance since \cite{Bachelier}, biology, chemistry, etc. In more recent times, stochastic oscillators   play a central role in {\em cold damping feedback} where (\ref{OU34}) is replaced by
\begin{subequations}\label{OUC}
\begin{eqnarray}\label{OUC1}
dx(t)&=&\phantom{-}v(t)\,dt,\\\label{OUC2}
dv(t)&=&-\beta v(t)\,dt+u(x(t),v(t))dt-\frac{1}{m}\nabla V(x(t))dt+\sigma dW_t,
\end{eqnarray}
\end{subequations}
The purpose of this feedback control action is to reduce the effect of thermal noise on the motion of an oscillator by applying a viscous-like force, which is  the very first feedback control action  mathematically analyzed \cite{Max}.  James Clerk Maxwell  writes there: ``In one class of regulators of machinery, which we may call {\em moderators}, the resistance is increased by a quantity depending on the velocity". The first implementation on electrometers \cite{MZV} dates back to the fifties. Since then, it has been successfully employed in a variety of areas such as  atomic force microscopy (AFM) \cite{LMC} ({\em second intersection}!), see\footnote{Notice that the experimental apparatus here is partially inspired by that of Kappler \cite{Kap} with a light being shined onto a small mirror and the angle being measured through the position of the reflected spot a large distance away.} Figure \ref{SurfTop},  polymer dynamics \cite{DE,BBP} and nano to
meter-sized resonators, see \cite{FK,MG,SR,Vin,PV}. For (\ref{OUC}), the feedback control action $u(t)=-\alpha v(t)$, $\alpha>0$, asymptotically steers the phase space distribution to the steady state
\begin{equation}\label{StSt}
\bar{\rho}(x,v)=\bar Z^{-1}\exp\left[-\frac{H(x,v)}{kT_{{\rm eff}}}\right],
\end{equation}
where the {\em effective temperature} $T_{{\rm eff}}$ satisfies
\[T_{{\rm eff}}=\frac{\beta}{\beta+\alpha}T<T.
\]

\begin{figure}\begin{center}
\includegraphics[width=12cm]{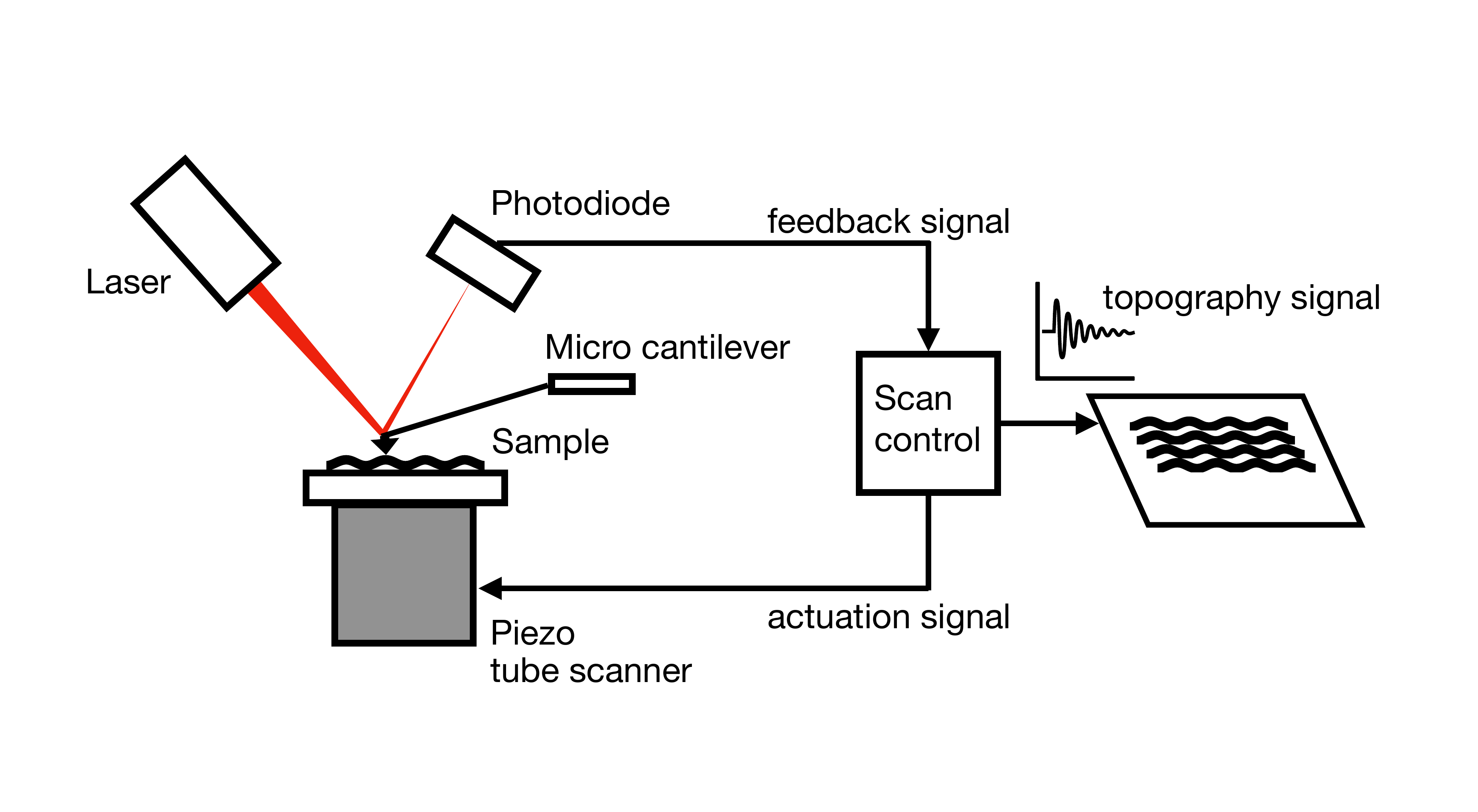} 
\caption{Surface topography: Velocity-dependent feedback control used to reduce thermal noise of a cantilever in Atomic Force Microscopy.} 
\label{SurfTop}
\end{center}\end{figure}

These new applications also pose new {\em physics} questions as the system is driven to a non-equilibrium {\em steady state} \cite {Q,KQ,BCD,PT1}. In \cite{FH}, a suitable {\em efficiency measure} for these diffusion-mediated devices was introduced which involves a class of stochastic control problems. Stochastic oscillators play also an important role in accelerating convergence of stochastic gradient descent for neural networks \cite[Chapter 6]{pavl}, \cite{COOSC}.

In \cite{CGPcooling}, the problem of asymptotically driving system (\ref{OUC}) to a desired {\em steady state} corresponding to reduced thermal noise was considered.
Among the feedback controls achieving the desired asymptotic transfer, it was found  that the least energy one is characterized by  {\em {time}-reversibility}. This problem has its roots in the classical covariance control of Skelton, Grigoriadis and collaborators \cite{HS1,HS2,SkeIke,GS,ZGS}. 

The problem of steering with minimum effort such a system in {\em finite time} to a target steady state distribution was also solved in  \cite{CGPcooling} as a generalized  Schr\"odinger bridge problem.   The system can then be maintained in the desired state state
through the optimal steady-state feedback control. The solution, {in the finite-horizon case,}  involves a space-time harmonic function $\varphi$ satisfying
\begin{equation}\label{sth}\frac{\partial\varphi}{\partial t}+v\cdot\nabla_x\varphi+(-\beta v-\frac{1}{m}\nabla_x V)\cdot\nabla_v\varphi+\frac{\sigma^2}{2}\Delta_{v}\varphi=0.
\end{equation} 
Here $-\log\varphi$ plays the role of an artificial, time-varying potential under which the desired evolution takes place. This two-step control strategy is effectively illustrated by the following simple Gaussian example. The system
\begin{eqnarray*}
dx(t)&=&\phantom{-}v(t)dt\\
dv(t)&=& -v(t)dt +u(t)dt -x(t)dt +dW_t
\end{eqnarray*}
is first optimally steered from time $t=0$  to time $t=1$ between the initial and final
Gaussian
marginals $\mathcal N(0,(1/2)I)$ and $\mathcal N(0,(1/2^4)I)$, respectively. The latter distribution is  then maintained through constant feedback in this terminal desired state; see Figure \ref{fig:1} where the transparent tube represents the $3$-standard deviation region of the state distribution.
\begin{figure}\begin{center}
\includegraphics[width=0.55\textwidth]{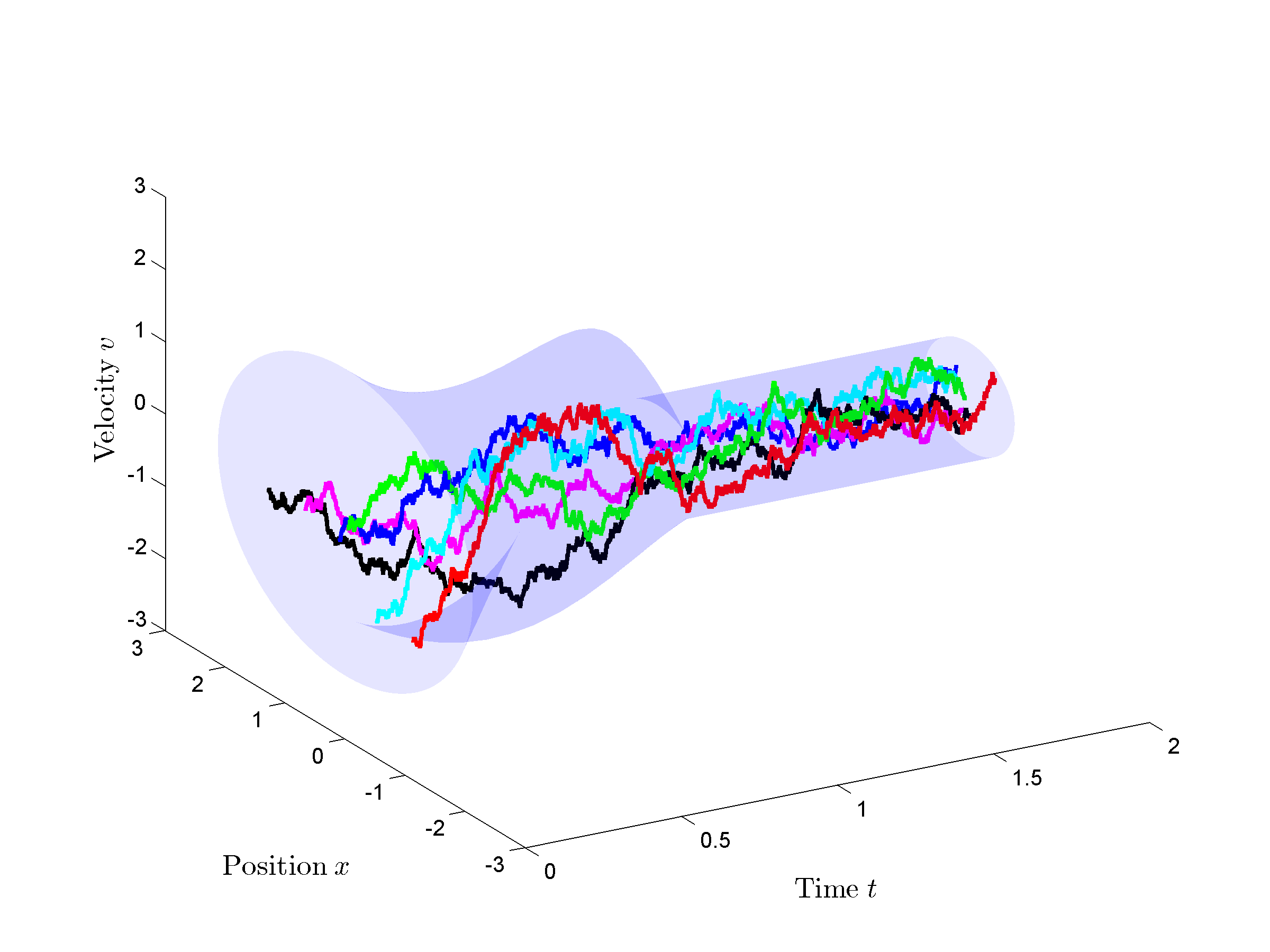}
   \caption{Inertial particles: trajectories in phase space}
   \label{fig:1}
\end{center}\end{figure}

\section{Minimizing the free energy}\label{connections}

We now proceed to clarify how the problems considered by Sinkhorn \cite{Sin64,Sin67} are connected to Schr\"odinger bridges and to thermodynamical free energy. To achieve this in the most transparent way, we turn to the discrete setting.

\subsection{Regularized transport problems}\label{RLPP}
Let us first recall the notion of the simplex of probability distributions on a finite set. Let $V$ be a vector space and $A\subseteq V$. The
{\em convex hull} \cite{rocka} of $A$, written $con A$, is the intersection of all
convex subsets of $V$ containing $A$. The convex hull of $n+1$ {\em affinely independent}\footnote{The points $x_1,x_2,\ldots,x_{n+1}$ are called affinely independent if every point $x$ in their convex hull admits a {\em unique} representation as convex combination of the points.} points of a Euclidean space is called an $n$-{\em simplex}. For example, a $1$-simplex is a line segment, a $2$-simplex is a triangle and a $3$-simplex is a tetrahedron, and so on.
Let ${\cal D}(\mathcal X)$ denote the family of all probability distributions on the sample space  $\mathcal X=\{1,2,\ldots,n\}$. Then ${\cal D}(\mathcal X)$ is an $(n-1)$-simplex whose vertices are the 
distributions $p_i(j)=\delta_{ij}$, where $\delta_{ij}$ is the Kronecker delta.

The discrete OMT problem \cite[Vol. I]{RR} has been popularized in the following form. Suppose there are $n$ mines with mine $i$ producing the fraction $p_i$ of the total production. There are also $n$ factories which need the raw material from the mines. To operate, factory $j$ needs the fraction $q_j$ of the total available supply. Let $C=(c_{ij})_{i,j=1}^n$ be a matrix of ``transportation costs"\footnote{$c_{ij}$ is the cost of transporting one unit of material from mine $i$ to factory $j$.} with nonnegative elements. On ${\cal D}(\mathcal X)$, we can then define a metric in the following way: Given the two probability distributions  $p,q\in \mathcal D(\mathcal X)$, let $\Pi(p,q)$ be the family of probability distributions on $\mathcal X\times\mathcal X$ that are ``couplings" of $p$ and $q$, namely $\pi\in\Pi(p,q)$ has marginals $p$ and $q$, respectively.  Any $\pi\in\Pi(p,q)$ represents a feasible transport plan, the quantity $\pi_{ij}$ representing the amount of the demand of factory $j$ which is satisfied by mine $i$. Then, the discrete OMT problem of minimizing the total cost of transportation while respecting the constraints leads to the {\em optimal transport distance} between $p$ and $q$  defined by
\begin{equation}\label{DW}d_C(p,q):=\min_{\pi\in\Pi(p,q)}\sum_{i,j}c_{ij}\pi_{ij}.
\end{equation}
When $c_{ij}=d(i,j)^2$, where $d(\cdot,\cdot)$ is a distance on $\mathcal X\times\mathcal X$,
\[W_2(p,q):=\left(d_C(p,q)\right)^{1/2}.
\] 
is called {\em earth mover distance (Wasserstein $2$-distance)}. It can be shown \cite[Proposition 2.2]{PC} that $W_2$ is a {\em bona fide} distance on $\mathcal D(\mathcal X)$. This distance has recently found important applications in many diverse fields of science such as economics, physics, engineering and probability, and in particular, in information engineering for problems of imaging (DTI, multimodal, color, etc), robust-efficient transport over networks, spectral analysis, collective dynamics, etc.  A regularized version of (\ref{DW}), which features important algorithmic/computational advantages \cite{Cuturi,PC},  is obtained by subtracting a term proportional to the entropy 
\begin{equation}\label{RDW}\inf_{\pi\in\Pi(p,q)}\left[\sum_{i,j}c_{ij}\pi_{ij}-\epsilon S(\pi)\right], \quad S(\pi)=-\sum_{ij}\pi_{ij}\log(\pi_{ij}),
\end{equation}
for $\epsilon>0$.
Notice, in particular, that the resulting functional
\begin{equation}\label{regfunct}
J(\pi)=\sum_{i,j}c_{ij}\pi_{ij}+\epsilon\sum_{ij}\pi_{ij}\log(\pi_{ij})
\end{equation}
is strictly convex in $\pi$.

\subsection{Thermodynamic systems: Statics}\label{thermostatics}
We consider a physical system with state space ${\cal X}=\{1,2,\ldots,M\}$. We can think of this mesoscopic description as originating from a microscopic description where the {\em phase space}, in Boltzmann's style, has undergone a ``coarse graining" through subdivision into small cells which is what we typically observe. Each of the cells represents a mesoscopic state. While the microscopic states are equally likely, this is no more true for the macroscopic states which correspond to different numbers of microstates.

For each macroscopic state $x$ we consider its {\em energy} $E_x\ge 0$. The function {$H: x\mapsto E_x$ is referred to as the {\em Hamiltonian}}. The thermodynamic states of the system are given by probability distributions on ${\cal Z}$ reflecting how many microscopic states correspond to the macroscopic ones, namely by ${\cal D}({\cal X})$. On  ${\cal D}({\cal X})$, we define the {\em internal energy} as the expected value of the Energy {\em observable} in state $\pi$, namely,
\begin{equation}
U(\pi)=\E_{\pi}\{H\}=\sum_xE_x\pi_x=\langle E,\pi\rangle,
\end{equation}
where $E$ denotes the $n$-dimensional vector with components $E_x$. Let us also introduce the {\em Gibbs entropy}
\begin{equation}
S_G(\pi)=kS(\pi)=-k\sum_x \pi_x\log \pi_x,
\end{equation}
where $k$ is Boltzmann's constant. As is well-known, $S_G$ is nonnegative and strictly concave on ${\cal D}({\cal X})$.
Let $\bar{E}$ be a  constant satisfying
\begin{equation}\label{energybounds}
\min_x E_x\le \bar{E}\le\frac{1}{n}\sum_xE_x.
\end{equation}

We can think of $\bar{E}$ as the energy of the underlying conservative microscopic system (the upper bound $\frac{1}{n}\sum_lE_l$  in (\ref{energybounds}) guarantees existence of a positive multiplier, see below). We now consider the following {\em Maximum Entropy} problem:
\begin{subequations}
\begin{eqnarray}\label{ME1}
\max \quad \{S_G(\pi) \,\mid \, \pi\in{\cal D}({\cal X})\}
\\{\rm subject \;to}\quad U(\pi)=\bar{E}.\label{ME1a}
\end{eqnarray}
\end{subequations}
This is an (important) instance of a class of maximum entropy problems originating with Boltzmann \cite{BOL}, see  \cite{PF} for a survey, where entropy is maximized over probability distributions that give the correct expectation of certain observables in accordance with known macroscopic quantities. 
The {\em Lagrangian function} is then given by
\begin{equation}\label{lagrangianSM}
{\cal L}(\pi,\lambda):=S_G(\pi)+\lambda(\bar{E}-U(\pi)),
\end{equation}
where  the Lagrange multiplier $\lambda$ is positive, corresponding to positive ``absolute temperatures'' $T=\lambda^{-1}$.
The problem is then equivalent to minimizing over  ${\cal D}({\cal X})$ the {\em Helmholtz Free energy} functional
\begin{equation}\label{freeenergy}
F(\pi,T)=U(\pi)-TS_G(\pi)=\sum_xE_x\pi_x+kT\sum_x \pi_x\log \pi_x. 
\end{equation}

Since $F$ is strictly convex on  ${\cal D}({\cal X})$, the first order optimality conditions are sufficient, and determine the unique minimizer in the form of the {\em Boltzmann distribution}\footnote{The letter $Z$ for the {\em partition function} was chosen by Boltzmann to indicate ``zust\"{a}ndige Summe" (relevant sum).}
\begin{equation}\label{Boltdistr}
\pi_B(x)=Z(T)^{-1}\exp\left[-\frac{E_x}{kT}\right], \mbox{ where }Z(T)=\sum_x\exp\left[-\frac{E_x}{kT}\right],
\end{equation}
see e.g. \cite{Ruelle}. Alternatively, it suffices to observe that 
 \begin{equation}\label{eq:F2}
F(\pi,T)=T\D(\pi\|\pi_B)-T\log Z(T)
\end{equation}
and invoke the properties of relative entropy. As is well-known, the Boltzmann distribution (\ref{Boltdistr}) tends to the uniform (maximum entropy) distribution as $T\nearrow+\infty$ and tends to concentrate on the set of minimal energy states as $T\searrow 0$. Hence, for $0<T<+\infty$, the Boltzmann distribution represents in a precise way a {\em compromise} between minimizing energy and maximizing entropy.

\subsection{Schr\"odinger and Sinkhorn,
redux}\label{SCHRSINK}

The fact that the Boltzmann distribution minimizes the free energy may be viewed as an elementary version of what is often called {\em Gibbs' variational principle}. Notice that the minimization of $F$ in (\ref{freeenergy}) is {\em unconstrained}. Nevertheless, we are often interested in minimizing the free energy under additional constraints. This is usually the case with natural evolutions which tend to maximal entropy configurations\footnote{According to Planck,  nature seems to favour high entropy states.} 
while respecting
certain constraints. In particular, we now consider a  constrained version of the minimization of (\ref{freeenergy}). In the notation of Section \ref{RLPP}, let ${\cal Z}=\mathcal X\times\mathcal X$ and consider the problem
\begin{equation}\label{constrainedfree2}
\inf_{\pi\in\Pi(p,q)} F(\pi,T). 
\end{equation}
Then, letting $\epsilon=kT$, comparing (\ref{freeenergy}) with (\ref{regfunct}) shows that (\ref{constrainedfree2}) coincides with the regularized optimal transport problem (\ref{RDW}). In particular, up to constants, {\em the negative Lagrangian} (\ref{lagrangianSM}) {\em for Problem} (\ref{ME1})-(\ref{ME1a}) {\em coincides with the functional} (\ref{regfunct}) {\em to be minimized in regularized optimal transport}.          On the other hand, because of (\ref{eq:F2}), Problem (\ref{RDW}) is equivalent to 
\begin{equation}\label{constrainedfree}
\min_{\pi\in\Pi(p,q)} \D(\pi\|\pi_B), 
\end{equation}
where
\begin{equation}\label{Boltdistr2}
\pi_B(i,j)=Z(T)^{-1}\exp\left[-\frac{c_{ij}}{kT}\right],\quad Z(T)=\sum_{ij}\exp\left[-\frac{c_{ij}}{kT}\right],
\end{equation}
which is a discrete counterpart of Problem \ref{static}. Naturally, this and the other maximum entropy problems of this section, also admit the large deviations interpretation of Section \ref{margedevmaxent}.

Let us now write the joint probability $\pi_B(i,j)$ as
\[\pi_B(i,j)=p_B(i)p_B(i,j),
\]
where $p_B(i,j)$ is the conditional probability. Introducing Lagrange multipliers for the linear constraints
\begin{eqnarray}\label{constraintuniformg1}
\sum_j \pi_{ij}&=&p_i,\quad i=1,2,\ldots,n\\\sum_i \pi_{ij}&=&q_j, \quad j=1,2,\ldots,n,
\label{constraintuniformg2}
\end{eqnarray}
and proceeding precisely as in Section \ref{derivation}, we readily get the following expression for the optimal $\pi$
\begin{equation}\label{optimumrepresentation}
\pi^*_{ij}=\hat{\varphi}(i)p_B(i,j)\varphi(j)
\end{equation}
where the non-negative functions $\hat{\varphi}$ and $\varphi$ satisfy the system
\begin{eqnarray}\label{DSS1}\hat{\varphi}(i)\left[\sum_jp_B(i,j)\varphi(j)\right]&=&p_i;\\
\varphi(j)\left[\sum_ip_B(i,j)\hat{\varphi}(i)\right]&=&q_j.
\label{DSS2}
\end{eqnarray}
Defining $\hat{\varphi}(0,i)=\hat{\varphi}(i)$, $\varphi(1,j)=\varphi(j)$, we see that (\ref{DSS1})-(\ref{DSS2}) can be replaced by the system
\begin{subequations}\label{eq:DSchrsystem}
\begin{eqnarray}\label{eq:DSchrsystemA}
&&\varphi(0,i)=\sum_jp_B(i,j)\varphi(1,j),\\
&&\hat{\varphi}(1,j)=\sum_ip_B(i,j)\hat{\varphi}(0,i)\label{eq:DSchrsystemB},\\&&\varphi(0,i)\cdot\hat{\varphi}(0,i)=p_i,\label{eq:DSchrsystemC}\\&&\varphi(1,j)\cdot\hat{\varphi}(1,j)=q_j.\label{eq:DSchrsystemD}
\end{eqnarray}
\end{subequations}

Let us write
\[\pi^*_{ij}=p_i\cdot p^*(i,j),
\]
and assume $p_i>0$ for all $i$. Dividing both sides of (\ref{optimumrepresentation}) by $p_i$ we get, in view of (\ref{eq:DSchrsystemC}), 
\begin{equation}\label{optimumrepresentation2}
p^*(i,j)=\frac{1}{\varphi(0,i)}p_B(i,j)\varphi(1,j)
\end{equation}
which should be compared to (\ref{multfunct}). It is interesting to write (\ref{optimumrepresentation2}) in matricial form. Let $P^*=(p^*(i,j))$ and $P_B=(p_B(i,j))$. Then (\ref{optimumrepresentation2}) gives
\begin{equation}\label{optimumrepresentation3}
P^*=\diag\left(\frac{1}{\varphi(0,1)},\ldots,\frac{1}{\varphi(0,n)}\right)P_B\diag\left(\varphi(1,1),\ldots,\varphi(1,n)\right).
\end{equation}

System (\ref{eq:DSchrsystem}) represents {\em a discrete counterpart of the Schr\"odinger system} (\ref{Schonestep1})-(\ref{Schonestep2})-(\ref{BConestep}). Existence for the latter, as already observed after (\ref{BConestep}), is extremely challenging with the first solution being provided by Robert Fortet in 1940 \cite{For} through a complex iterative scheme. The same problem  is much simpler in the discrete setting with the first convergence proof for the classical IPF procedure being provided in a special case by Sinkhorn in 1964 \cite{Sin64}. 
Indeed, consider the special case where both marginals are uniform distributions so that $p_i=q_i=1/n,  i=1,2,\ldots,n$. Let $C=(c_{ij})$ be the matrix of transportation costs. Then $\pi\in{\cal D}({\cal X}\times{\cal X})$ belongs  to $\Pi(p,q)$ if and only if it satisfies the constraints
\begin{eqnarray}\label{constraintuniform1}
\sum_j \pi_{ij}&=&\frac{1}{n},\quad i=1,2,\ldots,n\\\sum_i \pi_{ij}&=&\frac{1}{n}, \quad j=1,2,\ldots,n.
\label{constraintuniform2}
\end{eqnarray}
Thus, the matrix $n\pi$ must be {\em doubly stochastic} which was the original Sinkhorn problem\footnote{Sinkhorn: ``It is not the intent of this paper to obtain properties of this estimate..". Sinkhorn appears only concerned in establishing convergence of the iterative method to a doubly stochastic matrix without clearly connecting the latter to an optimization problem.}.

\section{The Fortet-IPF-Sinkhorn algorithm}\label{FIPFS}
\subsection{Continuous case}\label{contIPF}
In 1938-1940 Robert Fortet, a fine French analyst former student of Maurice Frechet, sets out to solve the problem of existence and uniqueness for the Schr\"odinger system
(\ref{Schonestep1})-(\ref{Schonestep2})-(\ref{BConestep}), left open by Schr\"odinger\footnote{Schr\"odinger thought existence and uniqueness should hold since the problem looked to him so natural except, possibly, in the case of very nasty marginals.} as well as by Bernstein in \cite{Ber}.
Fortet's proof in \cite{For0,For} is, to this day, the only {\em algorithmic} one and  in a rather general setting,
establishing convergence of successive approximations. More explicitly, let $g(x,y)$ be a nonnegative, continuous function on $\R\times\R$ bounded from above. Suppose $g(x,y)>0$ except  possibly for a  zero measure set for each fixed value of $x$ or of $y$. Suppose that $\rho_0(x)$ and $\rho_1(y)$ are continuous, nonnegative functions such that 
\[\int\rho_0(x)dx=\int\rho_1(y)dy.
\]
Suppose, moreover, that the integral
\[\int \frac{\rho_1(y)}{\int g(z,y)\rho_0(z)dz}dy
\]
is finite (this is Fortet's crucial hypothesis).
Then the system \cite[Theorem 1]{For}
\begin{eqnarray}
\hat\varphi(x)\int g(x,y)\varphi(y)dy&=&\rho_0(x),\label{OPT1}\\
\varphi(y)\int g(x,y)\hat\varphi(x)dx&=&\rho_1(y) \label{OPT2}
\end{eqnarray}
admits a solution $(\varphi(x),\hat\varphi(y))$ with $\varphi\ge 0$ continuous and $\hat\varphi\ge 0$ measurable. Moreover, $\hat\varphi(x)=0$ only where $\rho_0(x)=0$ and $\varphi(y)=0$ only where $\rho_1(y)=0$.

The result is proven by setting up a complex approximation scheme to show that $\cC(h) = h$ with
\begin{equation}\label{nonlineareq}
\cC(h)(\cdot) : =\int g(\cdot,y)\frac{\rho_1(y)dy}{\int g(z,y)\frac{\rho_0(z)}{h(z)}dz}
\end{equation}
has a fixed point. The map $\cC$ is considered on functions of class (K), namely functions $h:\R\rightarrow (\R\cup +\infty)$ which satisfy the following properties:
\begin{itemize}
\item[i)] $h$ is measurable;
\item[ii)] There exists $\alpha>0$ such that $h(x)\ge \alpha, \forall x\in\R$;
\item[iii)] For almost every $x\in\R$, $h(x) < +\infty$.
\end{itemize}
If $h_0$ and $h_1$ are of class (K) and $h_0\le h_1$ a.e., then $\cC(h_0)\le\cC(h_1)$. Moreover, on class (K) functions, $\cC$ is positively homogeneous of degree one. Unfortunately, the map $\cC$ does not map class (K) functions into class (K) functions as it does not preserve the property of being bounded away from zero. This is a fundamental difficulty of the continuous case which Fortet circumvents through his brilliant but complex approximation scheme involving three sequences of functions. This difficulty can be altogether avoided in the discrete case through a suitable positivity assumption, see Theorem \ref{Fundtheorem} below.   Notice that the one-dimensional heat kernel
\[
g(x,y)=p(0,x,1,y)=\frac{1}{\sqrt{2\pi\gamma}}\exp\left[-\frac{|x-y|^2} {2\gamma}\right].
\]
satisfies all assumptions of Fortet's theorem. Uniqueness, in the sense described after formula (\ref{BConestep}) , namely uniqueness of rays, is much easier to establish. In \cite{EP}, most of Fortet's paper has been revisited filling in all the gaps and  explaining the meaning of the various steps of his elaborate approach. Another recent paper in this direction is \cite{leo3}.

\subsection{Discrete case}\label{ITERATIVEALGORITHM}

In 1940, an  {\em iterative proportional fitting} (IPF)  procedure, was proposed in the statistical literature on contingency tables \cite{DS1940}. Convergence for the IPF algorithm was first established (in a special case) by Richard Sinkhorn in 1964  \cite{Sin64}. The iterates were shortly afterwards shown to converge to a ``minimum discrimination information" \cite{IK,Fien,csiszar0}, namely to a minimum entropy distance. This line of research, usually called {\em Sinkhorn algorithms}, continues to this date, see e.g. \cite{Cuturi,Alt,TCDP}.

We 
now state and, later on,
establish the following fundamental result.

\begin{theorem} \label{Fundtheorem}  Let $\mathcal X=\{1,2,\ldots,n\}$ and $p,q\in{\cal D}(\mathcal X)$. Assume that the $n\times n$ matrix $G=\left(g_{ij}\right)$ has all positive elements.  Then, there exist  vectors
$\varphi(0,\cdot),\,\hat\varphi(1,\cdot)$ with positive entries such that

\begin{subequations}\label{eq:DSchroesystem}
\begin{eqnarray}\label{eq:DSchroesystemA}
&&\varphi(0,i)=\sum_j g_{ij}\varphi(1,j),\\
&&\hat{\varphi}(1,j)=\sum_i g_{ij}\hat{\varphi}(0,i)\label{eq:DSchroesystemB},\\&&\varphi(0,i)\cdot\hat{\varphi}(0,i)=p_i,\label{eq:DSchroesystemC}\\&&\varphi(1,j)\cdot\hat{\varphi}(1,j)=q_j.\label{eq:DSchroesystemD}
\end{eqnarray}
\end{subequations}
The pair $\varphi(0,\cdot),\,\hat\varphi(1,\cdot)$ is unique up to multiplication of $\varphi(0,\cdot)$  by a positive constant $\alpha$ and division of $\hat\varphi(1,\cdot)$ by the same constant $\alpha$.
\end{theorem}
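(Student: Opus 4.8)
The plan is to recast the Schr\"odinger system \textup{(\ref{eq:DSchroesystem})} as a matrix--scaling problem. Put $v_j=\varphi(1,j)$ and $u_i=\hat\varphi(0,i)$; then \textup{(\ref{eq:DSchroesystemA})}--\textup{(\ref{eq:DSchroesystemB})} merely \emph{define} $\varphi(0,i)=(Gv)_i$ and $\hat\varphi(1,j)=(G^{\top}u)_j$, so that \textup{(\ref{eq:DSchroesystemC})}--\textup{(\ref{eq:DSchroesystemD})} reduce to
\[
u_i\,(Gv)_i=p_i,\qquad v_j\,(G^{\top}u)_j=q_j ,
\]
i.e.\ the strictly positive matrix $\mathrm{diag}(u)\,G\,\mathrm{diag}(v)$ must have row sums $p$ and column sums $q$. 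Since all $g_{ij}>0$ there is no loss in assuming $p$ and $q$ strictly positive (otherwise restrict to their supports; positivity of $\varphi(0,\cdot)$ and $\hat\varphi(1,\cdot)$ is in any case automatic from \textup{(\ref{eq:DSchroesystemA})}--\textup{(\ref{eq:DSchroesystemB})} once a non-trivial solution exists, because $G>0$).

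Introduce the one--step scalings $R(v)_i=p_i/(Gv)_i$ and $C(u)_j=q_j/(G^{\top}u)_j$ on the open positive cone $\R^n_{>0}$, and set $T=C\circ R$. Both $R$ and $C$ are homogeneous of degree $-1$, so $T$ is homogeneous of degree $+1$ and descends to a self-map of the set $\mathcal D^{+}(\mathcal X)$ of strictly positive probability vectors (equivalently, of the space of rays in $\R^n_{>0}$). A fixed point $v^{\ast}$ of $T$, together with $u^{\ast}=R(v^{\ast})$, solves the system, via $\varphi(0,\cdot)=Gv^{\ast}$, $\hat\varphi(0,\cdot)=u^{\ast}$, $\varphi(1,\cdot)=v^{\ast}$, $\hat\varphi(1,\cdot)=G^{\top}u^{\ast}$; conversely every positive solution arises from a fixed point of $T$, the only residual freedom being $v\mapsto\alpha v$, $u\mapsto\alpha^{-1}u$, which is precisely the rescaling of $(\varphi(0,\cdot),\hat\varphi(1,\cdot))$ asserted in the statement.

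Existence and uniqueness of the fixed point I would obtain from Birkhoff's contraction theorem \cite{birkhoff1957extensions}. Equip $\R^n_{>0}$ with Hilbert's projective metric $d_H(x,y)=\log\max_i(x_i/y_i)-\log\min_i(x_i/y_i)$. Three elementary facts suffice: coordinatewise multiplication by a fixed positive vector is a $d_H$--isometry; the coordinatewise reciprocal $x\mapsto(1/x_i)_i$ is a $d_H$--isometry; and a strictly positive matrix $A$ contracts $d_H$ with ratio at most $\tanh(\Delta(A)/4)<1$, where $\Delta(A)=\sup\{d_H(Ax,Ay):x,y\in\R^n_{>0}\}$ is the (finite) projective diameter of its image, and $\Delta(G^{\top})=\Delta(G)$. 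Factoring $R=(\text{multiplication by }p)\circ(\text{reciprocal})\circ G$ shows that $R$ is a $d_H$--contraction of ratio $\tanh(\Delta(G)/4)$, and likewise $C$, so $T$ contracts $d_H$ on $\mathcal D^{+}(\mathcal X)$ with ratio $\le\tanh(\Delta(G)/4)^2<1$. The delicate point is that $\bigl(\mathcal D^{+}(\mathcal X),d_H\bigr)$ is \emph{not} complete; however, after a single step every iterate lies in $R(\R^n_{>0})$, a set of $d_H$--diameter $\le\Delta(G)$, hence inside a subset of $\mathcal D^{+}(\mathcal X)$ on which the ratios $x_i/x_j$ are uniformly bounded --- a set that is compact in $\mathcal D^{+}(\mathcal X)$ and on which $d_H$ \emph{is} a complete metric. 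Banach's fixed point theorem then yields a unique $v^{\ast}\in\mathcal D^{+}(\mathcal X)$ with $Tv^{\ast}=v^{\ast}$; reading off the four functions as above proves existence and uniqueness up to rescaling, and the contraction additionally shows that the Fortet--IPF--Sinkhorn iterates converge to it geometrically.

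The main obstacle is exactly this last point: marrying the contraction with the incompleteness of Hilbert's metric on the simplex, which forces one to exploit $\Delta(G)<\infty$ to trap the iteration in a compact piece of $\mathcal D^{+}(\mathcal X)$ before invoking Banach's theorem. (An alternative route to existence, which nonetheless hides the algorithmic content of Section~\ref{ITERATIVEALGORITHM}, is variational: with $g_{ij}=\exp(-c_{ij}/\epsilon)$ the functional $J(\pi)=\sum_{ij}c_{ij}\pi_{ij}+\epsilon\sum_{ij}\pi_{ij}\log\pi_{ij}$ is strictly convex and continuous on the compact polytope $\Pi(p,q)$; its unique minimizer lies in the relative interior because $\partial J/\partial\pi_{ij}\to-\infty$ as $\pi_{ij}\to0^{+}$, so the Lagrange conditions for \textup{(\ref{constraintuniformg1})}--\textup{(\ref{constraintuniformg2})} hold and force the product form $\pi^{\ast}_{ij}=\hat\varphi(i)\,g_{ij}\,\varphi(j)$, whence \textup{(\ref{eq:DSchroesystem})} and its uniqueness follow at once.)
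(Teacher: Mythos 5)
Your proof follows essentially the same route as the paper's: recast (\ref{eq:DSchroesystem}) as a scaling problem, iterate the four-step cyclic map (your $T=C\circ R$ is the paper's $\cC=\cE\circ\cD_1\circ\cE^\dagger\circ\cD_0$ read off from a different entry point in the cycle), and invoke the Birkhoff--Bushell contraction of the Hilbert projective metric. Two remarks, one cosmetic and one substantive.

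The cosmetic one: you assert that $(\mathcal D^{+}(\mathcal X),d_H)$ is \emph{not} complete and therefore restrict to the image of one iterate. In fact it \emph{is} complete, as the paper states citing Bushell: a $d_H$-Cauchy sequence of normalized positive vectors has uniformly bounded coordinate ratios, hence is confined to a Euclidean-compact subset of the open simplex, and a Euclidean subsequential limit there is readily checked to be the $d_H$-limit. Your trapping device is valid but superfluous.

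The substantive one: $T$ is positively homogeneous of degree one and does \emph{not}, in general, map $\mathcal D^{+}(\mathcal X)$ into itself (take $G=\left[\begin{smallmatrix}1&2\\3&4\end{smallmatrix}\right]$, $p=q=(1/2,1/2)$, $v=(1/2,1/2)$: then $\|T(v)\|_1\neq 1$). The Banach argument on the projective quotient therefore produces a fixed \emph{ray}, i.e.\ a vector $v^{\ast}$ with $T(v^{\ast})=\beta v^{\ast}$ for some $\beta>0$. If $\beta\neq 1$, reading off $\varphi(1)=v^{\ast}$, $\hat\varphi(0)=R(v^{\ast})=:u^{\ast}$, $\hat\varphi(1)=G^{\top}u^{\ast}$, $\varphi(0)=Gv^{\ast}$ satisfies (\ref{eq:DSchroesystemA})--(\ref{eq:DSchroesystemC}) but gives (\ref{eq:DSchroesystemD}) with $q_j/\beta$ in place of $q_j$; the system is not solved. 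You must show $\beta=1$. The paper does so through the pairing chain $1=\alpha\langle\hat\varphi(0),\varphi(0)\rangle=\alpha\langle\cE^\dagger\hat\varphi(0),\varphi(1)\rangle=\alpha\langle\hat\varphi(1),\varphi(1)\rangle=\alpha$; in your variables the same computation reads $1=\sum_j q_j=\beta\sum_j v^{\ast}_j(G^{\top}u^{\ast})_j=\beta\langle Gv^{\ast},u^{\ast}\rangle=\beta\sum_i p_i=\beta$. Your parenthetical variational alternative would indeed sidestep this, but the main argument as written needs the step inserted.
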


We  first set up a natural iterative scheme (Fortet-IPF-Sinkhorn) for system (\ref{eq:DSchroesystem}).  
We introduce the following linear maps on $\R^n_+=\{x\in\R^n: x_i\ge 0\}$, the positive orthant of $\R^n$:
\begin{eqnarray*}
\cE\;:\; &x\mapsto y&=\sum_jg_{ij}x_j,\\
\cE^\dagger \;:\; &x\mapsto y&=\sum_ig_{ij}x_i.
\end{eqnarray*}
Here and in the sequel $\dagger$ denotes adjoint\footnote{Our use of the adjoint for the map $\cE$ is consistent with the standard notation in diffusion processes where the Fokker-Planck (forward) equation involves the adjoint of the {\em generator} appearing in the backward Kolmogorov equation.}.We also define the following nonlinear maps on the interior of the positive orthant ${\rm int}\;\R^n_+=\{x\in\R^n: x_i> 0\}$
\begin{eqnarray*}
\cD_0\;:\;&x\mapsto y&= \frac{p}{x}\\
\cD_1\;:\;&x\mapsto y&= \frac{q}{x}
\end{eqnarray*}
where division of vectors is {\em componentwise}. On $\R^n_+$, consider also the composition of the four maps
\begin{equation}\label{Composition}\cC:=\cE \circ\cD_1\circ\cE^\dagger\circ\cD_0. 
\end{equation}
This is just the discrete counterpart of  map (\ref{nonlineareq}). Consider the vector iteration
\begin{equation}\label{Iteration}
\varphi_{k+1}(0)=\cC(\varphi_k(0)), \quad \varphi_0(0)=\mathds{1},
\end{equation}
where $\mathds{1}^\dagger=(1,1,\ldots,1)$. Observe first that (\ref{Iteration}) stays in the interior of $\R^n_+$ even when the marginals $p$ and/or $q$ have some zero components. Indeed, since $g_{ij}>0, \forall (i,j)$,  maps $\cE$ and $\cE^\dagger$ map $\R^n_+$ into ${\rm int}\;\R^n_+$. It follows that the componentwise divisions of $\cD_0$ and $\cD_1$ are well defined and $\cC:{\rm int}\;\R^n_+\mapsto{\rm int}\;\R^n_+$.  Next, we want to show that the sequence generated by (\ref{Iteration}) converges to a fixed point of $\cC$, thereby proving Theorem \ref{Fundtheorem}.
Theorem \ref{Fundtheorem} asserts, in particular, that {\em uniqueness} for the Schr\"odinger system (\ref{eq:DSchroesystem}) concerns {\em rays} in the positive orthant. This suggests that contractivity of the map (\ref{Composition}) should be established on the set of rays endowed with a {\em projective metric}. This was accomplished in \cite[Theorem 3]{GP}. It extends the approach of Franklin and Lorenz \cite{FL} dealing with the scaling of nonnegative matrices. We devote the next subsection to some key results on Hilbert's projective metric in which the rays convergence can be proved.


\subsection{Hilbert's projective metric}
This metric was introduced in 1895 \cite{hilbert1895gerade}. In 1957 \cite{birkhoff1957extensions}, Garrett Birkhoff proved  crucial contractivity result in this metric that permits to establish existence of solutions  of linear equations on cones (such as the Perron-Frobenius theorem (Theorem \ref{perrontheorem} below)). This result was extended to certain nonlinear maps by Bushell \cite{bushell1973projective,bushell1973hilbert}. Besides the ergodic theory for Markov chains,  the Birkhoff-Bushell results have been applied  to positive integral operators and to positive definite matrices \cite{bushell1973hilbert, lemmens2013birkhoff}. In recent times, this geometry has proven useful in  problems concerning  communication and computations over networks (see \cite{tsitsiklis1986distributed}. Other significant applications have been developed by  Sepulchre and collaborators \cite{sepulchre2010consensus,sepulchre2011contraction,BFS}. These concern  consensus in non-commutative spaces and metrics for spectral densities. We mention also applications to  in  quantum information theory \cite{reeb2011hilbert}. On the more mathematical side, a  survey on the applications in analysis is \cite{lemmens2013birkhoff}. The use of the Hilbert metric is crucial in the nonlinear Frobenius-Perron theory \cite{lemmens2012nonlinear}. A further extension of the Perron-Frobenius theory beyond linear positive systems and monotone systems has been recently proposed in \cite{FS}.

Applications of the Birkhoff-Bushell contractivity results to the topics of this paper apparently initiated in 1989 with the paper  \cite{FL} which deals with scaling of nonnegative matrices. In \cite{GP}, we showed that the Schr\"odinger bridge for Markov chains and quantum channels can be efficiently obtained from the fixed-point of a map which  contracts the {\em Hilbert metric}.  In \cite{CGP9}, a similar approach was taken in the context of diffusion processes leading to  a new proof of a classical result of Jamison on existence and uniqueness for the Schr\"odinger bridge and  providing as well an efficient computational scheme for both Schr\"odinger Bridges  and OMT. This new computational approach can be effectively employed, for instance, in image interpolation. There are, however, some fundamental difficulties in using this approach in Schr\"odinger's original setting. These are outlined in Remark \ref{HMC} below.

Following \cite{bushell1973hilbert}, we recall some basic concepts and results of this theory.

Let $\cS$ be a real Banach space and let $\cK$ be a closed solid cone in $\cS$, i.e., $\cK$ is closed with nonempty interior ${\rm int}\,\cK$ and is such that $\cK+\cK\subseteq \cK$, $\cK\cap -\cK=\{0\}$ as well as $\lambda \cK\subseteq \cK$ for all $\lambda\geq 0$. Define the partial order
\[
x\preceq y \Leftrightarrow y-x\in\cK,\quad x< y \Leftrightarrow y-x\in{\rm int}\,\cK
\]
and for $x,y\in\cK_0:=\cK\backslash \{0\}$, define
\begin{eqnarray*}
M(x,y)&:=&\inf\, \{\lambda\,\mid x\preceq \lambda y\}\\
m(x,y)&:=&\sup \{\lambda \mid \lambda y\preceq x \}.
\end{eqnarray*}
Then, the Hilbert metric is defined on $\cK_0$ by
\[
d_H(x,y):=\log\left(\frac{M(x,y)}{m(x,y)}\right).
\]
Strictly speaking, it is a {\em projective} metric since it is invariant under scaling by positive constants, i.e.,
$d_H(x,y)=d_H(\lambda x,\mu y)$ for any $\lambda>0, \mu>0$ and $x,y\in{\rm int}\,\cK$. Thus, it is actually a  distance between rays. If $U$ denotes the unit sphere in $\cS$, $\left({\rm int}\,\cK\cap U,d_H\right)$ is a metric space.
\begin{example}\label{ex1} Let $\cK=\R^n_+=\{x\in\R^n: x_i\ge 0\}$ be the positive orthant of $\R^n$. Then, for $x,y\in{\rm int}\R^n_+$, namely with all positive components,
\[d_H(x,y)=\log\max\{x_iy_j/y_ix_j\}.
\]
\end{example}

Another very important example for applications in many diverse areas of statistics, information theory, control, etc., is the cone of Hermitian, positive semidefinite matrices.
\begin{example}\label{ex2}
Let $\cS=\{X=X^\dagger\in\C^{n\times n}\}$, where $\dagger$ denotes here transposition plus conjugation and, more generally, adjoint. Let $\cK=\{X\in\cS: X\ge 0\}$ be the positive semidefinite matrices. Then, for $X,Y\in{\rm int}\,\cK$, namely positive definite, we have
\[d_H(X,Y)=\log\frac{\lambda_{\max}\left(XY^{-1}\right)}{\lambda_{\min}\left(XY^{-1}\right)}=\log\frac{\lambda_{\max}\left(Y^{-1/2}XY^{-1/2}\right)}{\lambda_{\min}\left(Y^{-1/2}XY^{-1/2}\right)}.
\]
It is closely connected to the Riemannian (Fisher-information) metric
\[
 d_R(X,Y)=\|\log\left(Y^{-1/2}XY^{-1/2}\right)\|_{F}=
\sqrt{\sum_{i=1}^n[\log\lambda_i\left(Y^{-1/2}XY^{-1/2}\right)]^2}.
\]
\end{example}

Notice that, in the two examples above, Hilbert's pseudo-metric  puts the boundary of
the cone at  infinite distance from any interior point.

A map $\cE:\cK\rightarrow\cK$ is called {\em non-negative}. It is called {\em positive} if $\cE:{\rm int}\,\cK\rightarrow{\rm int}\,\cK$. If $\cE$ is positive and $\cE(\lambda x)=\lambda^p\cE(x)$ for all $x\in{\rm int}\,\cK$ and positive $\lambda$, $\cE$ is called {\em positively homogeneous of degree $p$} in ${\rm int}\,\cK$.
For a positive map $\cE$, the {\em projective diameter} is befined by
\begin{eqnarray*}
\Delta(\cE):=\sup\{d_H(\cE(x),\cE(y))\mid x,y\in {\rm int}\,\cK\}
\end{eqnarray*}
and the {\em contraction ratio} by
\begin{eqnarray*}
\kappa(\cE):=\inf\{\lambda \mid d_H(\cE(x),\cE(y))\leq \lambda d_H(x,y),\forall x,y\in{\rm int}\,\cK\}.
\end{eqnarray*}
Finally, a map $\cE:{\cS}\rightarrow\cS$ is called {\em monotone increasing} if $x\le y$ implies $\cE(x)\le\cE(y)$.
\begin{theorem}[\cite{bushell1973hilbert}] \label{poshom}Let $\cE$ be a monotone increasing positive mapping which is positive
homogeneous of degree $p$ in ${\rm int}\,\cK$. Then the contraction $\kappa(\cE)$ does not exceed $p$. In particular, if $\cE$ is a positive linear mapping, $\kappa(\cE)\le1$.
\end{theorem}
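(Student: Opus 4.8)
The plan is to deduce the bound directly from the definitions of $M(x,y)$ and $m(x,y)$: use monotonicity to push the comparison inequalities through $\cE$, and use homogeneity to keep track of how the scaling constants transform. First I would fix $x,y\in{\rm int}\,\cK$ and set $M:=M(x,y)$, $m:=m(x,y)$. Because $x$ and $y$ are interior points of the closed solid cone $\cK$, the standard preliminaries of the Birkhoff--Bushell theory give $0<m\le M<\infty$ and, crucially, the ``sandwich''
\[
m\,y\preceq x\preceq M\,y ;
\]
indeed $\{\lambda\ge 0:\lambda y-x\in\cK\}$ is closed since $\cK$ is closed, so its infimum $M$ belongs to it, and symmetrically $x-m y\in\cK$. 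I would record this sandwich either as a short preliminary lemma or as a citation to \cite{bushell1973hilbert}.

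Next I would apply $\cE$ to both halves of the sandwich. From $m\,y\preceq x$ and monotonicity, $\cE(m\,y)\preceq\cE(x)$; since $m>0$ and $y\in{\rm int}\,\cK$, positive homogeneity of degree $p$ gives $\cE(m\,y)=m^{p}\cE(y)$, hence $m^{p}\cE(y)\preceq\cE(x)$, i.e.\ $m(\cE x,\cE y)\ge m^{p}$. Symmetrically, $x\preceq M\,y$ yields $\cE(x)\preceq M^{p}\cE(y)$, i.e.\ $M(\cE x,\cE y)\le M^{p}$. Positivity of $\cE$ ensures $\cE x,\cE y\in{\rm int}\,\cK$, so $d_H(\cE x,\cE y)$ is finite and well defined, and combining the two estimates,
\[
d_H(\cE x,\cE y)=\log\frac{M(\cE x,\cE y)}{m(\cE x,\cE y)}\le\log\frac{M^{p}}{m^{p}}=p\,\log\frac{M}{m}=p\,d_H(x,y).
\]
As $x,y\in{\rm int}\,\cK$ were arbitrary, the constant $p$ satisfies the defining inequality for $\kappa(\cE)$, whence $\kappa(\cE)\le p$.

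For the final assertion I would note that a positive linear map is positively homogeneous of degree $1$, and that it is automatically monotone increasing: if $x\preceq y$ then $y-x\in\cK$, and linearity together with positivity (extended from ${\rm int}\,\cK$ to $\cK=\overline{{\rm int}\,\cK}$ by continuity) give $\cE(y)-\cE(x)=\cE(y-x)\in\cK$, i.e.\ $\cE(x)\preceq\cE(y)$. Applying the first part with $p=1$ then gives $\kappa(\cE)\le 1$.

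The computation itself is essentially a one-liner; the only point requiring genuine care is the sandwich $m\,y\preceq x\preceq M\,y$ together with the finiteness and strict positivity of $M$ and $m$, which is precisely where the hypotheses that $\cK$ is a closed solid cone and that $x,y$ are interior are needed (this is the same fact that makes $d_H$ a bona fide metric on ${\rm int}\,\cK\cap U$). I would therefore state that property cleanly before invoking it, so that the remainder reads as the short deduction above.
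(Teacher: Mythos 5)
Your proof is correct: the attained sandwich $m\,y\preceq x\preceq M\,y$, monotonicity, and degree-$p$ homogeneity give exactly $d_H(\cE x,\cE y)\le p\,d_H(x,y)$, which is the standard Birkhoff--Bushell argument; the paper itself states this theorem only as a citation to \cite{bushell1973hilbert} and gives no proof, so your route coincides with the source's. One small refinement for the linear case: continuity of $\cE$ (which you invoke to extend positivity from ${\rm int}\,\cK$ to $\cK$) is not among the hypotheses, but it is not needed, since for any $\epsilon>0$ the elements $x-(m-\epsilon)y=(x-my)+\epsilon y$ and $(M+\epsilon)y-x=(My-x)+\epsilon y$ lie in ${\rm int}\,\cK$, so positivity on the interior alone yields $m(\cE x,\cE y)\ge m-\epsilon$ and $M(\cE x,\cE y)\le M+\epsilon$, and letting $\epsilon\to 0$ gives $\kappa(\cE)\le 1$.
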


\begin{theorem}[\cite{{birkhoff1957extensions},bushell1973hilbert}]\label{BBcontraction}
Let $\cE$ be a positive linear map. Then
\begin{equation}\label{condiam}
\kappa(\cE)=\tanh(\frac{1}{4}\Delta(\cE)).
\end{equation}
\end{theorem}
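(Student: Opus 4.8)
The plan is to establish the two inequalities $\kappa(\cE)\le\tanh(\tfrac14\Delta(\cE))$ and $\kappa(\cE)\ge\tanh(\tfrac14\Delta(\cE))$ separately, both built on one elementary identity. Fix $u,v\in{\rm int}\,\cK$ that are not proportional and set $M=M(u,v)$, $m=m(u,v)$, so $d_H(u,v)=\log(M/m)$ and the vectors $u-mv$, $Mv-u$ lie in $\cK\setminus\{0\}$; linearity of $\cE$ gives
\[
\cE u=\frac{M\,p+m\,q}{M-m},\qquad \cE v=\frac{p+q}{M-m},\qquad p:=\cE(u-mv),\ \ q:=\cE(Mv-u).
\]
If $p=0$ or $q=0$ then $\cE u$ and $\cE v$ are proportional and $d_H(\cE u,\cE v)=0$, so assume $p,q\in\cK\setminus\{0\}$; writing $M'=M(p,q)$, $m'=m(p,q)$ we have $m'q\preceq p\preceq M'q$, and, by lower semicontinuity of $d_H$ and the definition of the projective diameter, $M'/m'=e^{d_H(p,q)}\le e^{\Delta(\cE)}$ (here we may assume $\Delta(\cE)<\infty$: when $\Delta(\cE)=+\infty$ the identity reads $\kappa(\cE)=1$, which holds by Theorem~\ref{poshom} together with the construction below).

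For the upper bound, two one-line cone estimates do the work:
\[
M(\cE u,\cE v)\le\frac{MM'+m}{M'+1},\qquad m(\cE u,\cE v)\ge\frac{Mm'+m}{m'+1},
\]
since, e.g., $\tfrac{MM'+m}{M'+1}(p+q)-(Mp+mq)=\tfrac{M-m}{M'+1}(M'q-p)\succeq0$. With $R=M/m$, $\rho=M'/m'$, $t=m'$ this yields $d_H(\cE u,\cE v)\le\log\frac{(R\rho t+1)(t+1)}{(Rt+1)(\rho t+1)}$. The expression inside the logarithm is invariant under $t\mapsto(R\rho t)^{-1}$, hence is maximal at the fixed point $t=(R\rho)^{-1/2}$, where it equals $\bigl(\tfrac{1+\sqrt{R\rho}}{\sqrt R+\sqrt\rho}\bigr)^2$. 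Substituting $R=e^{2s}$, $\rho=e^{2\delta}$ with $\delta=\tfrac12 d_H(p,q)\le\tfrac12\Delta(\cE)$ turns this into $\bigl(\cosh\tfrac{s+\delta}{2}/\cosh\tfrac{s-\delta}{2}\bigr)^2$, so $d_H(\cE u,\cE v)/d_H(u,v)\le\psi(s)/s$ with $\psi(s)=\log\cosh\tfrac{s+\delta}{2}-\log\cosh\tfrac{s-\delta}{2}$. Now $\psi(0)=0$, $\psi'(s)>0$, and $\psi''(s)=\tfrac14\bigl({\rm sech}^2\tfrac{s+\delta}{2}-{\rm sech}^2\tfrac{s-\delta}{2}\bigr)<0$ for $s>0$, because ${\rm sech}^2$ is even and strictly decreasing on $[0,\infty)$ while $\tfrac{s+\delta}{2}>\tfrac{|s-\delta|}{2}$; hence $\psi$ is concave with $\psi(0)=0$, so $\psi(s)/s$ decreases and is bounded by $\lim_{s\to0^+}\psi(s)/s=\psi'(0)=\tanh(\delta/2)\le\tanh(\tfrac14\Delta(\cE))$. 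Taking the supremum over $u,v$ gives $\kappa(\cE)\le\tanh(\tfrac14\Delta(\cE))$.

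For the reverse inequality, note first that both cone estimates are in fact equalities: $\cK$ is closed, so the infimum defining $M(p,q)$ and the supremum defining $m(p,q)$ are attained, which forces $M(\cE u,\cE v)=\frac{MM'+m}{M'+1}$ and $m(\cE u,\cE v)=\frac{Mm'+m}{m'+1}$ exactly for every non-proportional $u,v\in{\rm int}\,\cK$. Thus it suffices to produce such pairs for which $\rho=M'/m'$ is arbitrarily close to $e^{\Delta(\cE)}$, $t=m'$ is close to $(R\rho)^{-1/2}$, and $s=\tfrac12\log R$ is small. One reverses the decomposition: choose directions $x_1,x_2$ with $d_H(\cE x_1,\cE x_2)>\Delta(\cE)-\epsilon$, and put $v=\frac{c_1x_1+c_2x_2}{M-m}$, $u=\frac{Mc_1x_1+mc_2x_2}{M-m}$, so that $u-mv$ and $Mv-u$ are positive multiples of $x_1$ and $x_2$; then $p,q$ are positive multiples of $\cE x_1,\cE x_2$, giving $\rho=e^{d_H(\cE x_1,\cE x_2)}$, while $t$ is tuned by the ratio $c_1/c_2$ and $R=M/m$ is taken near $1$. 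Letting $\epsilon\to0$ and $R\to1$ yields $\kappa(\cE)\ge\tanh(\tfrac14\Delta(\cE))$, and the same scheme (with $d_H(\cE x_1,\cE x_2)\to\infty$) gives $\kappa(\cE)=1$ when $\Delta(\cE)=+\infty$.

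The main obstacle is this last step: for the reconstructed pair one must genuinely have $M(u,v)=M$ and $m(u,v)=m$, i.e. $d_H(u,v)=\log(M/m)$ with no slack. Interior directions $x_1,x_2$ dominate one another, so $u-mv$ ends up dominated by a strictly larger multiple of $v$ and $d_H(u,v)$ drops below $\log(M/m)$, which would spoil the quotient; the fix is to take $x_1,x_2$ approaching $\partial\cK$ (where $d_H(x_1,x_2)\to\infty$) and pass to a limit, using that the images $\cE x_1,\cE x_2$ stay at Hilbert distance $\le\Delta(\cE)$. The upper bound, by contrast, is self-contained once the decomposition and the two cone estimates are written; its only subtle points are recognizing the symmetry $t\mapsto(R\rho t)^{-1}$ and the concavity of $\psi$ that pins the extremal ratio at $s\to0^+$.
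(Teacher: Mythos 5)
First, a framing remark: the paper does not prove Theorem \ref{BBcontraction} at all; it is quoted with citations to Birkhoff and Bushell, so there is no in-paper proof to compare against. Your argument is exactly the classical two-ray reduction from those references, and the upper-bound half is essentially complete: the decomposition $\cE u=\frac{Mp+mq}{M-m}$, $\cE v=\frac{p+q}{M-m}$, the two cone estimates, the lower-semicontinuity remark (needed because $u-mv$ and $Mv-u$ generally lie on $\partial\cK$, so $d_H(p,q)\le\Delta(\cE)$ is not immediate from the definition of the diameter), and the calculus giving $\psi(s)/s\le\tanh(\delta/2)\le\tanh(\tfrac14\Delta(\cE))$ are all sound. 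The one step you should not rest on symmetry alone is ``invariant under $t\mapsto(R\rho t)^{-1}$, hence maximal at the fixed point'': an involution-invariant function can have two symmetric maxima. Here it does not, but you need the one-line check: $\frac{d}{dt}\log f(t)=0$ reduces to $R(1+\rho t)(1+t)=(1+Rt)(1+R\rho t)$, i.e.\ $(R-1)(R\rho t^{2}-1)=0$, so $t=(R\rho)^{-1/2}$ is the unique interior critical point, and since $f\to1$ at both ends of $(0,\infty)$ and $f>1$ inside, it is the global maximizer. Minor and fixable.

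The genuine gap is the ending of the lower bound: as written it stops at an admitted obstacle, and the proposed repair (push $x_1,x_2$ to $\partial\cK$ and ``pass to a limit'') is not an argument — it would force you to keep $u,v$ interior and to keep $d_H(\cE x_1,\cE x_2)$ near $\Delta(\cE)$ along the limit, for which lower semicontinuity goes the wrong way. But the obstacle is spurious, so no such limit is needed. Your exactness computation uses only the representation $\cE u=\frac{Mp+mq}{M-m}$, $\cE v=\frac{p+q}{M-m}$ with $p,q\in\cK\setminus\{0\}$; it never uses that $M,m$ are the Hilbert data of $(u,v)$. So for the reconstructed pair ($u-mv=c_1x_1$, $Mv-u=c_2x_2$ with $x_1,x_2\in{\rm int}\,\cK$) you still get the exact value $d_H(\cE u,\cE v)=\log f(R,\rho,t)$ with $\rho=e^{d_H(\cE x_1,\cE x_2)}$ and $t$ tunable via $c_1/c_2$, while the slack only gives $d_H(u,v)\le\log R$. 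Hence $\frac{d_H(\cE u,\cE v)}{d_H(u,v)}\ge\frac{\log f(R,\rho,t)}{\log R}$: the slack you feared can only increase the quotient, and it cannot carry you above $\tanh(\tfrac14\Delta(\cE))$ because of the upper bound you already proved. Choosing interior $x_1,x_2$ with $d_H(\cE x_1,\cE x_2)\ge\Delta(\cE)-\epsilon$, setting $t=(R\rho)^{-1/2}$ and letting $R\to1^{+}$ makes the right-hand side tend to $\tanh(\delta/2)\ge\tanh(\tfrac14(\Delta(\cE)-\epsilon))$, which gives $\kappa(\cE)\ge\tanh(\tfrac14\Delta(\cE))$, and letting $d_H(\cE x_1,\cE x_2)\to\infty$ handles $\Delta(\cE)=+\infty$ together with Theorem \ref{poshom}. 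Replace your final paragraph by this observation and the proof is complete; as it stands, the equality is not established.
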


\begin{theorem}[\cite{bushell1973hilbert}]\label{BBcontraction2}
Let $\cE$ be either
\begin{itemize}
\item[\rm (a)] a monotone increasing positive mapping which is positive homogeneous of degree $p (0<p<1)$ in ${\rm int}\,\cK$, or
\item[\rm (b)] a positive linear mapping with finite projective diameter.
\end{itemize}
Suppose the metric space $Y=\left({\rm int}\,\cK\cap U,d_H\right)$ is complete. Then, in case $(a)$ there exists a unique $x\in{\rm int}\,\cK$ such that $\cE(x)=x$, in case $(b)$ there exists a unique positive eigenvector of $\cE$ in $Y$.
\end{theorem}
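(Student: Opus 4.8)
The strategy is to reduce both cases to the classical Banach contraction principle on the complete metric space $Y = ({\rm int}\,\cK \cap U, d_H)$. The obstruction is that a map homogeneous of degree $p$ (case (a)) or a linear map (case (b)) need not send the unit sphere $U$ into itself, so it is not literally a self-map of $Y$. I would circumvent this by \emph{normalizing}: since $\cE$ is positive, $\cE(x) \in {\rm int}\,\cK$ whenever $x \in {\rm int}\,\cK$, in particular $\cE(x) \neq 0$, so one may set
\[
\tilde\cE(x) := \frac{\cE(x)}{\|\cE(x)\|}, \qquad x \in Y,
\]
a well-defined self-map $\tilde\cE: Y \to Y$. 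Because $d_H$ is a \emph{projective} metric, invariant under scaling of either argument by a positive constant, one has $d_H(\tilde\cE(x), \tilde\cE(y)) = d_H(\cE(x), \cE(y))$ for all $x, y \in Y$; hence $\tilde\cE$ inherits exactly the $d_H$-contraction ratio of $\cE$.

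In case (a), Theorem \ref{poshom} gives $\kappa(\cE) \le p < 1$, so $\tilde\cE$ is a strict contraction on $(Y, d_H)$, which is complete by hypothesis. The Banach fixed-point theorem yields a unique $\bar x \in Y$ with $\tilde\cE(\bar x) = \bar x$, i.e.\ $\cE(\bar x) = \mu \bar x$ with $\mu := \|\cE(\bar x)\| > 0$. To upgrade this to a genuine fixed point I would rescale: for $c > 0$, homogeneity gives $\cE(c\bar x) = c^p \mu \bar x = c^{p-1}\mu\,(c\bar x)$, so choosing $c = \mu^{1/(1-p)}$ (well-defined since $\mu > 0$ and $1 - p > 0$) makes $x := c\bar x \in {\rm int}\,\cK$ satisfy $\cE(x) = x$. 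For uniqueness in ${\rm int}\,\cK$: if $\cE(x_1) = x_1$ and $\cE(x_2) = x_2$, then $d_H(x_1, x_2) = d_H(\cE(x_1), \cE(x_2)) \le \kappa\, d_H(x_1, x_2)$ with $\kappa < 1$, forcing $d_H(x_1, x_2) = 0$, i.e.\ $x_1 = \lambda x_2$ for some $\lambda > 0$; applying $\cE$ and using homogeneity, $x_1 = \lambda^p x_2$, so $\lambda = \lambda^p$, hence $\lambda = 1$ because $p \neq 1$. Thus $x_1 = x_2$.

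In case (b), $\cE$ is linear with $\Delta(\cE) < \infty$, so Theorem \ref{BBcontraction} gives $\kappa(\cE) = \tanh(\tfrac14 \Delta(\cE)) < 1$, and again $\tilde\cE$ is a strict contraction on the complete space $(Y, d_H)$. The Banach theorem produces a unique $\bar x \in Y$ with $\tilde\cE(\bar x) = \bar x$, i.e.\ $\cE(\bar x) = \mu \bar x$ with $\mu = \|\cE(\bar x)\| > 0$: a positive eigenvector. Uniqueness in $Y$ is immediate, since any positive eigenvector $x \in Y$ satisfies $\tilde\cE(x) = x$ and the Banach fixed point is unique. Here, unlike case (a), the homogeneity degree is $1$, so one cannot normalize away the eigenvalue; this is precisely why the conclusion is phrased in terms of an eigenvector rather than a fixed point.

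The only substantive inputs are the Birkhoff–Bushell contractivity estimates (Theorems \ref{poshom} and \ref{BBcontraction}), which are available from the preceding material, and the completeness of $(Y, d_H)$, which is assumed. The remaining points requiring care — and the places where I expect the write-up to be most delicate — are: verifying that $\tilde\cE$ is genuinely a self-map of $Y$ (using positivity of $\cE$ to ensure $\cE(x) \ne 0$ and $\cE(x) \in {\rm int}\,\cK$); confirming the scaling-invariance identity $d_H(\tilde\cE x, \tilde\cE y) = d_H(\cE x, \cE y)$ so that the contraction ratio transfers intact; and, in case (a), the rescaling step converting the eigenvector of $\tilde\cE$ into an honest fixed point of $\cE$, together with the $\lambda = \lambda^p \Rightarrow \lambda = 1$ argument, which is the one place the strict inequality $p < 1$ (rather than merely $p \le 1$) is essential.
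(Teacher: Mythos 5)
Your argument is correct, and it is essentially the standard Birkhoff--Bushell proof: the paper itself offers no proof of Theorem \ref{BBcontraction2} (it is quoted from Bushell), and the cited proof proceeds exactly as you do, by normalizing $\cE$ to a self-map of $Y$, invoking projective invariance of $d_H$ together with Theorems \ref{poshom} and \ref{BBcontraction} to get a strict contraction, and applying the Banach fixed-point theorem, with the rescaling $c=\mu^{1/(1-p)}$ and the $\lambda=\lambda^{p}\Rightarrow\lambda=1$ step handling case (a). Your identification of where $p<1$ is essential and of why case (b) only yields an eigenvector is accurate, so there is nothing to correct.
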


This result provides a far-reaching generalization of the celebrated Perron-Frobenius theorem  \cite{birkhoff1962Perron-Frobenius} (Theorem \ref{perrontheorem} below). Notice that in both Examples \ref{ex1} and \ref{ex2}, the space $Y=\left({\rm int}\,\cK\cap U,d_H\right)$ is indeed complete \cite{bushell1973hilbert}.
We also note that there are other metrics as well that are contracted by positive monotone maps. For instance, the closely related {\em Thompson metric} \cite{Tho} $d_T(x,y)=\log\max\{M(x,y),m^{-1}(x,y)\}$. The Thompson metric is a {\em bona fide} metric on $\cK$. It has been, for instance, employed in \cite{LW,CPSV,BFS}.

\begin{remark}\label{HMC}
{\em If we try to use a similar approach to prove existence for the Schr\"odinger system (\ref{OPT1})-(\ref{OPT2}), we may expect that in an infinite dimensional setting questions of boundness or integrability might become delicate. The main difficulty, however, lies here with two other issues. To introduce them, let us observe that in the Birkhoff-Bushell theory we have linear or nonlinear iterations which remain in the {\em interior of a cone}. For example, in the application of the Perron-Frobenius theorem to the ergodic theory of Markov chains, the assumption that there exists a power of the transition matrix with all strictly positive entries ensures that the evolution of the probability distribution occurs in the  {\em interior} of the positive orthant (intersected with the simplex). The first difficulty is that the natural function space cones such as $L^1_+$ ($L^2_+$), namely integrable (square integrable) nonnegative functions on $\R^d$, have {\em empty interior}\,!  The second difficulty is that, even if we manage to somehow define a suitable function space cone with nonempty interior\footnote{This was indeed accomplished in \cite{CGP9}.}, the nonlinear map $\Omega$ defined in (\ref{nonlineareq}) cannot map the interior of the cone into itself. Precisely to overcome this difficulty  in \cite{CGP9} the two marginals were assumed to have compact support. We see here once more, from a slightly different angle, how much more challenging the continuous case is.}\hfill$\Box$
\end{remark}

\subsection{Proof of Theorem \ref{Fundtheorem}}
We begin with
three preliminary results.
\begin{lemma}\label{Linear}
Consider the maps $\cE$ and $\cE^\dagger$. We have the following bounds on their contraction ratios:
\begin{equation}\label{strictcon}
\kappa(\cE)=\kappa(\cE^\dagger)=\tanh\left(\frac{1}{4}\Delta(\cE)\right)<1.
\end{equation}
\end{lemma}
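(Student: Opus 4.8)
The plan is to reduce the whole statement to the Birkhoff--Bushell identity $\kappa(\cE)=\tanh(\frac{1}{4}\Delta(\cE))$ of Theorem~\ref{BBcontraction}, which holds for any positive linear map. First I would record that $\cE$ and $\cE^\dagger$ are positive linear maps on the cone $\cK=\R^n_+$: since every $g_{ij}>0$, for $x\in{\rm int}\,\R^n_+$ we have $(\cE x)_i=\sum_j g_{ij}x_j>0$ and $(\cE^\dagger x)_j=\sum_i g_{ij}x_i>0$, so both send ${\rm int}\,\R^n_+$ into itself, and they are obviously linear. Thus Theorem~\ref{BBcontraction} yields $\kappa(\cE)=\tanh(\frac14\Delta(\cE))$ and $\kappa(\cE^\dagger)=\tanh(\frac14\Delta(\cE^\dagger))$, and it remains to prove $\Delta(\cE)=\Delta(\cE^\dagger)<\infty$.

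For finiteness, set $\gamma=\min_{i,j}g_{ij}>0$ and $\Gamma=\max_{i,j}g_{ij}$. Using the orthant formula of Example~\ref{ex1}, for $x,y\in{\rm int}\,\R^n_+$,
\[
d_H(\cE x,\cE y)=\log\max_{i,k}\frac{(\cE x)_i\,(\cE y)_k}{(\cE y)_i\,(\cE x)_k},
\]
and since $(\cE x)_i/(\cE x)_k=\big(\sum_j g_{ij}x_j\big)/\big(\sum_j g_{kj}x_j\big)\le\Gamma/\gamma$, and likewise $(\cE y)_k/(\cE y)_i\le\Gamma/\gamma$, each ratio under the maximum is at most $(\Gamma/\gamma)^2$. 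Hence $\Delta(\cE)\le 2\log(\Gamma/\gamma)<\infty$, and therefore $\tanh(\frac14\Delta(\cE))<1$; the identical estimate bounds $\Delta(\cE^\dagger)$.

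To get $\Delta(\cE)=\Delta(\cE^\dagger)$ I would compute the projective diameter explicitly. For fixed $i,k$ the map $x\mapsto(\cE x)_i/(\cE x)_k$ is a ratio of positive linear functionals on $\R^n_+$, so its supremum over ${\rm int}\,\R^n_+$ equals its maximum over the closed simplex, attained at an extreme point $e_m$, namely $\max_m g_{im}/g_{km}$; similarly $\sup_y (\cE y)_k/(\cE y)_i=\max_l g_{kl}/g_{il}$. Since $x$ and $y$ vary independently, optimising the two factors separately gives
\[
\Delta(\cE)=\log\max_{i,k,m,l}\frac{g_{im}\,g_{kl}}{g_{km}\,g_{il}}
=\log\max\Big\{\frac{g_{ac}\,g_{bd}}{g_{ad}\,g_{bc}}\ :\ a,b,c,d\in\{1,\dots,n\}\Big\}.
\]
This final expression is invariant under replacing $G=(g_{ij})$ by its transpose $G^{\dagger}=(g_{ji})$, since transposition merely interchanges the role of the row pair $\{a,b\}$ with the column pair $\{c,d\}$ while the maximum still runs over all index quadruples; and $G^\dagger$ is precisely the matrix of $\cE^\dagger$. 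Hence $\Delta(\cE^\dagger)=\Delta(\cE)$.

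Putting the pieces together, $\kappa(\cE)=\tanh(\frac14\Delta(\cE))=\tanh(\frac14\Delta(\cE^\dagger))=\kappa(\cE^\dagger)<1$, which is~(\ref{strictcon}). I do not anticipate a real obstacle: the only slightly delicate point is the assertion that the supremum defining $\Delta(\cE)$ is achieved at the vertices $e_m$, which is just the elementary fact that a ratio of two positive linear functionals attains its extrema at the extreme points of a simplex; the strict contractivity itself is handed to us by Theorem~\ref{BBcontraction}.
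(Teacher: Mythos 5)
Your proof is correct and follows essentially the same route as the paper: compute the projective diameter of $\cE$ explicitly, observe it is finite since all $g_{ij}>0$, note by symmetry of the diameter formula in rows and columns that $\Delta(\cE^\dagger)=\Delta(\cE)$, and conclude via Theorem~\ref{BBcontraction}. The paper states the diameter formula and finiteness without elaboration, whereas you supply the (correct) justification that the extrema of the quotient of linear forms are attained at simplex vertices; otherwise the arguments coincide.
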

\begin{proof}
Observe that $\cE$ is a positive {\em linear} map and its projective diameter is
\begin{eqnarray*}
\Delta(\cE)&=&\sup\{d_H(\cE(x),\cE(y)) \mid x_i>0,\,y_i>0\}\\
&=&\sup\{\log\left(\frac{g_{ij}g_{k\ell}}{g_{i\ell}g_{kj}}\right)\mid1\leq  i,j,k,\ell\leq n\}.
\end{eqnarray*}
It is finite since all entries $g_{ij}$'s are positive. It now follows from Theorem \ref{BBcontraction} that its contraction ratio satisfies (\ref{strictcon}). Similarly for the adjoint map $\cE^\dagger$. \end{proof}
\begin{lemma}\label{inversion}
\[\kappa(\cD_0)\le1,\quad \kappa(\cD_1)\le1.\]
\end{lemma}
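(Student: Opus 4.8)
The plan is to show directly, from the explicit description of Hilbert's metric on the positive orthant recorded in Example \ref{ex1}, that each of the inversion maps $\cD_0$ and $\cD_1$ is \emph{non-expansive} in $d_H$; this gives $\kappa(\cD_0)\le 1$ and $\kappa(\cD_1)\le 1$ at once. Only $\cD_0$ needs to be treated, the argument for $\cD_1$ being word-for-word the same with $q$ in place of $p$.

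I would start from the formula $d_H(u,v)=\log\bigl(\max_i(u_i/v_i)\bigr)-\log\bigl(\min_i(u_i/v_i)\bigr)$, valid for $u,v$ in the interior of $\R^n_+$. The only mild complication is that $p$, being merely a probability vector, may have zero entries, so $\cD_0(x)=p/x$ need not land in the interior: it lands in the relative interior of the face $F_S:=\{y\in\R^n_+:\ y_i=0\ \text{for}\ i\notin S\}$, where $S:=\{i:\ p_i>0\}$. A quick check of the definitions of $M(\cdot,\cdot)$ and $m(\cdot,\cdot)$ shows that on pairs of points supported exactly on $S$ one has $M(u,v)=\max_{i\in S}u_i/v_i$ and $m(u,v)=\min_{i\in S}u_i/v_i$, so $d_H$ on $F_S$ is nothing but the orthant Hilbert metric carried by the coordinates in $S$.

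The key computation is then a one-liner: with $u=\cD_0(x)$ and $v=\cD_0(y)$ the common factors $p_i$ cancel, leaving $d_H(\cD_0 x,\cD_0 y)=\log\max_{i\in S}(y_i/x_i)-\log\min_{i\in S}(y_i/x_i)$; enlarging the index set from $S$ to $\{1,\dots,n\}$ can only increase the maximum and decrease the minimum, so this is at most $\log\max_i(y_i/x_i)-\log\min_i(y_i/x_i)$, and the latter equals $\log\max_i(x_i/y_i)-\log\min_i(x_i/y_i)=d_H(x,y)$ since $\max_i(y_i/x_i)=\bigl(\min_i(x_i/y_i)\bigr)^{-1}$ and $\min_i(y_i/x_i)=\bigl(\max_i(x_i/y_i)\bigr)^{-1}$. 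Hence $d_H(\cD_0 x,\cD_0 y)\le d_H(x,y)$ for all $x,y$ in the interior of $\R^n_+$, i.e.\ $\kappa(\cD_0)\le 1$, and identically $\kappa(\cD_1)\le 1$.

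There is essentially no obstacle here: $x\mapsto p/x$ acts on rays as coordinatewise inversion (a genuine $d_H$-isometry) followed by coordinatewise scaling by the nonnegative vector $p$ (an isometry onto its image, hence distance non-increasing overall). The only point deserving a line of care — and worth a parenthetical remark — is the degenerate case of marginals with vanishing components; when $p$ and $q$ are strictly positive the inequalities above are equalities, so $\cD_0,\cD_1$ are Hilbert isometries and the contraction factor of the composition $\cC$ in (\ref{Composition}) comes entirely from the linear maps $\cE,\cE^\dagger$, as quantified in Lemma \ref{Linear}.
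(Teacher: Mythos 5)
Your argument is correct and essentially identical to the paper's: both compute directly from the orthant formula for Hilbert's metric, noting that $\cD_0$ and $\cD_1$ act on rays as componentwise inversion composed with componentwise scaling by the marginal vector, each of which is an isometry when $p,q$ are strictly positive. Your handling of vanishing marginal components, via restriction to the face of the cone supported on $S=\{i:p_i>0\}$ and the observation that passing from $S$ to all indices can only increase $\log(\max/\min)$, is a bit more explicit than the paper's terse remark that the equality degrades to $\le$ when $p$ has zero entries, but it is the same observation.
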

\begin{proof} Observe that when 
$p$ and $q$ have positive entries, both $\cD_0$ and $\cD_1$
are isometries in the Hilbert metric. Indeed, for vectors $x,y$ in the interior of $\R^n_+$, inversion and element-wise scaling are both isometries for the Hilbert metric as the following calculations show \begin{eqnarray*}
d_H(x,y)&=&\log\left((\max_i (x_i/y_i))\frac{1}{\min_i (x_i/y_i)}\right)\\
&=&\log\left(\frac{1}{\min_i ((x_i)^{-1}/(y_i)^{-1})}\max_i ((x_i)^{-1}/(y_i)^{-1})\right)\\&=&d_H(x^{-1},y^{-1})
\end{eqnarray*}
where $x^{-1}$ and $y^{-1}$ are  obtained from $x$ and $y$, respectively, through componentwise inversion. Moreover, let $px$ and $py$ be the vectors with components $p_ix_i$ and $p_iy_i$, respectively. Then
\begin{eqnarray*}
d_H(px,py)&=&\log\frac{\max_i ((p_i x_i)/(p_i y_i))}{\min_i ((p_i x_i)/(p_i y_i))}\\
&=&\log\frac{\max_i (x_i/y_i)}{\min_i (x_i/y_i)}=d_H(x,y).
\end{eqnarray*}
If $p$ has  zero entries, then the second equality above needs to be replaced by the inequality ``$\le$''.
\end{proof}
\begin{lemma}\label{compothm} The composition
\begin{equation}\label{composition}\cC=\cE\circ \cD_1\circ \cE^\dagger\circ\cD_0
\end{equation}
contracts the Hilbert metric with contraction ratio $\kappa(\cC)<1$, namely
\[
d_H(\cC(x),\cC(y))<d_H(x,y), \quad \forall x,y\in{\rm int}\,\R^n_+.
\]
\end{lemma}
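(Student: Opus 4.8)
The plan is to combine the contractivity bounds on the four constituent maps established in Lemmas~\ref{Linear} and~\ref{inversion}, using the elementary fact that the contraction ratio of a composition is submultiplicative in the individual contraction ratios. Concretely, for any maps $\cF$, $\cG$ on $({\rm int}\,\R^n_+,d_H)$ one has $d_H(\cF(\cG(x)),\cF(\cG(y)))\le \kappa(\cF)\,d_H(\cG(x),\cG(y))\le \kappa(\cF)\kappa(\cG)\,d_H(x,y)$, hence $\kappa(\cF\circ\cG)\le\kappa(\cF)\kappa(\cG)$. Applying this three times to $\cC=\cE\circ\cD_1\circ\cE^\dagger\circ\cD_0$ gives
\begin{equation}\label{kappaCbound}
\kappa(\cC)\le \kappa(\cE)\,\kappa(\cD_1)\,\kappa(\cE^\dagger)\,\kappa(\cD_0).
\end{equation}

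Next I would insert the bounds already proved: by Lemma~\ref{Linear}, $\kappa(\cE)=\kappa(\cE^\dagger)=\tanh\!\left(\tfrac14\Delta(\cE)\right)<1$ (strict, since $G$ has all positive entries so $\Delta(\cE)<\infty$), and by Lemma~\ref{inversion}, $\kappa(\cD_0)\le 1$ and $\kappa(\cD_1)\le 1$. Substituting into \eqref{kappaCbound} yields
\begin{equation}\label{kappaCfinal}
\kappa(\cC)\le \tanh^2\!\left(\tfrac14\Delta(\cE)\right)<1,
\end{equation}
which is exactly the asserted strict contraction: $d_H(\cC(x),\cC(y))\le \kappa(\cC)\,d_H(x,y)<d_H(x,y)$ for all $x,y\in{\rm int}\,\R^n_+$ with $x$ not a positive scalar multiple of $y$ (when $x$ and $y$ lie on the same ray both sides are zero, so the stated strict inequality should be read, as is customary for projective metrics, on distinct rays). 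One should also note that $\cC$ indeed maps ${\rm int}\,\R^n_+$ into itself — this was already observed in the discussion following \eqref{Iteration}, since $\cE$ and $\cE^\dagger$ send $\R^n_+$ into ${\rm int}\,\R^n_+$ and the componentwise divisions defining $\cD_0,\cD_1$ are then well defined — so each composition step is legitimate and the submultiplicativity argument applies.

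The only genuinely delicate point is the submultiplicativity inequality itself when some of the maps are merely non-expansive rather than strict contractions and when the marginals $p,q$ may have zero entries: in that case $\cD_0$ or $\cD_1$ is only defined after the preceding map has pushed the iterate into ${\rm int}\,\R^n_+$, and $\kappa(\cD_0)\le1$ is an inequality (not an isometry) as noted at the end of the proof of Lemma~\ref{inversion}. I would therefore be careful to apply the maps in the order dictated by \eqref{composition} — first $\cD_0$, whose domain ${\rm int}\,\R^n_+$ is where we start, then $\cE^\dagger$ which lands in ${\rm int}\,\R^n_+$, then $\cD_1$, then $\cE$ — verifying at each stage that the argument lies in the domain of the next map, and only then chaining the contraction-ratio inequalities. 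This bookkeeping is the main obstacle, but it is routine; the essential content is the strictness coming from $\tanh(\tfrac14\Delta(\cE))<1$ in \eqref{kappaCfinal}, i.e.\ from the positivity of all entries of $G$.
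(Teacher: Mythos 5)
Your proof is correct and takes essentially the same approach as the paper, which simply states that the result ``follows at once from Lemmas \ref{Linear} and \ref{inversion}''; you are filling in precisely the submultiplicativity argument $\kappa(\cC)\le\kappa(\cE)\kappa(\cD_1)\kappa(\cE^\dagger)\kappa(\cD_0)\le\tanh^2\bigl(\tfrac14\Delta(\cE)\bigr)<1$ that the paper leaves implicit. Your remarks on domains and on reading the strict inequality projectively (distinct rays) are reasonable clarifications of the terse statement, not deviations from it.
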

\begin{proof}The result follows at once from Lemmas \ref{Linear} and \ref{inversion}.
\end{proof}

We now complete the proof of Theorem \ref{Fundtheorem}.
The set of rays in ${\rm int}\R^n_+$  with the Hilbert metric is complete (this can be proven by intersecting ${\rm int}\R^n_+$ with the unit sphere, see \cite[Section 4]{bushell1973hilbert}). By Lemma \ref{compothm}, $\cC$ contracts the Hilbert metric. By the Banach-Caccioppoli Contraction Mapping Theorem, there exists a unique ray in ${\rm int}\R^n_+$ to which the iteration (\ref{Iteration}) (starting from any vector in ${\rm int}\R^n_+$) converges. Next, we prove that, due to (\ref{eq:DSchroesystemD})-(\ref{eq:DSchroesystemD}), the iteration (\ref{Iteration}) actually converges to fixed vector. Since the iteration has a fixed ray, we have that for some positive constant $\alpha$
\[
\alpha\cdot\varphi(0)=\cC(\varphi(0)),
\]
where the composed map is $\cC:=\cE \circ\cD_1\circ\cE^\dagger\circ\cD_0$.
From this we can obtain
\begin{eqnarray*}
\hat\varphi(1)&=&\cE^\dagger(\hat\varphi(0)),\\
\varphi(0)&=&\cE(\varphi(1)),
\end{eqnarray*}
while
\begin{eqnarray*}
\hat\varphi(1)\varphi(1)&=& q\mbox{ and}\\
\alpha\hat\varphi(0)\varphi(0)&=& p, 
\end{eqnarray*}
where, as usual, multiplication is componentwise.
Let $\langle\cdot,\cdot\rangle$ denote the scalar product in $\R^n$. Since $p$ and $q$ are probability distributions, we have
\begin{eqnarray*}
1&=& \alpha\langle \hat\varphi(0),\varphi(0)\rangle\\
&=& \alpha\langle \hat\varphi(0),\cE(\varphi(1))\rangle\\
&=&\alpha\langle \cE^\dagger(\hat\varphi(0)),\varphi(1)\rangle\\
&=&\alpha\langle \hat\varphi(1),\varphi(1)\rangle\\
&=&\alpha.
\end{eqnarray*}
Thus $\alpha=1$, the iteration (\ref{Iteration}) converges to a fixed point (vector in $\R^n_+$) and the four vectors $(\varphi(0), \hat{\varphi}(0), \varphi(1),\hat{\varphi}(1))$ satisfy the Schr\"{o}dinger system (\ref{eq:DSchroesystem}). The vectors $\varphi(0)$ and $\hat\varphi(1)$ have all positive components.                      

\section{Efficient vs.\ robust routing for networks flows}\label{MENF}
While Problem \ref{constrainedfree}, and the corresponding equivalent regularized OMT (\ref{RDW}), are the discrete counterparts of Problem \ref{static}, it is apparent that the discrete counterpart of the ``dynamic" Schr\"odinger Bridge  Problem \ref{bridge} is still missing. Before we turn to dynamic problems with discrete state space, let us mention that there is also work on  discrete time and continuous state-space \cite{XS,Beghi, Bak1, GT1,Bak2,Bak5}. This literature mostly deals with Gaussian distributions,  both, in the finite and infinite horizon case, with and without noise in the dynamics 
and, 
with and without constraints. The case of regularized transport on discrete metric graphs has been studied by L\'eonard in \cite{leoD}.

\subsection{Generalized bridge problems}\label{GBProblems}
 Consider a directed, strongly connected (i.e., with at least one path joining each pair of vertices), aperiodic graph ${\bf G}=(\mathcal X,\mathcal E)$ with vertex set $\mathcal X=\{1,2,\ldots,n\}$ and edge set $\mathcal E\subseteq \mathcal X\times\mathcal X$.  Time is discrete and taken in ${\mathcal T}=\{0,1,\ldots,N\}$.
 
 Let ${\mathcal FP}_0^N\subseteq\mathcal X^{N+1}$ denote the family of feasible paths $x=(x_0,\ldots,x_N)$ of length $N$, namely paths such that $x_ix_{i+1}\in\mathcal E$ for $i=0,1,\ldots,N-1$.  We seek a probability distribution $\fP$ on ${\mathcal {FP}}_0^N$ with prescribed initial and final marginal probability distributions $\nu_0(\cdot)$ and $\nu_N(\cdot)$, respectively, and such that the resulting random evolution
is closest to a ``prior'' measure $\fM$ on ${\mathcal {FP}}_0^N$ in a suitable sense.

The prior law for our problem is induced by the Markovian evolution
 \begin{equation}\label{FP}
\mu_{t+1}(x_{t+1})=\sum_{x_t\in\mathcal X} \mu_t(x_t) m_{x_{t}x_{t+1}}(t)
\end{equation}
with nonnegative distributions $\mu_t(\cdot)$ over $\mathcal X$, $t\in{\mathcal T}$, and weights $m_{ij}(t)\geq 0$ for all indices $i,j\in{\mathcal X}$ and all times. Moreover, to respect the topology of the graph, $m_{ij}(t)=0$ for all $t$ whenever $ij\not\in\mathcal E$.  Often, but not always, the matrix
\begin{equation}\label{eq:matrixM}
M(t)=\left[ m_{ij}(t)\right]_{i,j=1}^n
\end{equation}
may not depend on $t$.
The rows of the transition matrix $M(t)$ do not necessarily sum up to one, so that the ``total transported mass'' is not necessarily preserved.  It occurs, for instance, when $M$ simply encodes the topological structure of the network with $m_{ij}$ being zero or one, depending on whether a certain link exists (i.e., when $M$ represents the {\em adjacency matrix} of the graph).
The evolution \eqref{FP}, together with measure $\mu_0(\cdot)$, which we assume positive on $\mathcal X$, i.e.,
\begin{equation}\label{eq:mupositive}
\mu_0(x)>0\mbox{ for all }x\in\mathcal X,
\end{equation}
 induces
a measure $\fM$ on ${\mathcal {FP}}_0^N$ as follows. It assigns to a path  $x=(x_0,x_1,\ldots,x_N)\in{\mathcal {FP}}_0^N$ the value
\begin{equation}\label{prior}\fM(x_0,x_1,\ldots,x_N)=\mu_0(x_0)m_{x_0x_1}\cdots m_{x_{N-1}x_N},
\end{equation}
and gives rise to a flow
of {\em one-time marginals}
\[\mu_t(x_t) = \sum_{x_{\ell\neq t}}\fM(x_0,x_1,\ldots,x_N), \quad t\in\mathcal T.\]
\begin{definition} We denote by ${\mathcal P}(\nu_0,\nu_N)$ the family of probability distributions on ${\mathcal {FP}}_0^N$ having the prescribed marginals $\nu_0(\cdot)$ and $\nu_N(\cdot)$.
\end{definition}

We seek a distribution in this set which is closest to the prior $\fM$ in {\em relative entropy} where, for $P$ and $Q$ measures on $\mathcal X^{N+1}$,  the relative entropy (divergence, Kullback-Leibler index) $\D(P\|Q)$ is
\begin{equation*}
\D(P\|Q):=\left\{\begin{array}{ll} \sum_{x}P(x)\log\frac{P(x)}{Q(x)}, & \support (P)\subseteq \support (Q),\\
+\infty , & \support (P)\not\subseteq \support (Q),\end{array}\right.
\end{equation*}
Here, by definition,  $0\cdot\log 0=0$.
Naturally, while the value of $\D(P\|Q)$ may turn out negative due to miss-match of scaling (in case $Q=\fM$ is not a probability measure), the relative entropy is always jointly convex. Thus,
we are led to the {\em Schr\"odinger Bridge  Problem} (SBP):

\begin{problem}\label{prob:optimization}
Determine
 \begin{eqnarray}\label{eq:optimization}
\fM^*[\nu_0,\nu_N]:={\rm argmin}\{ \D(P\|\fM) \mid  P\in {\mathcal P}(\nu_0,\nu_N)
\}.
\end{eqnarray}
\end{problem}
\noindent
The following result may be proven in the usual way, cf. Section \ref{SCHRSINK} and  \cite{PT2,CGPT1,CGPT2}.
\begin{theorem}\label{solbridge} Assume that the entries of the matrix product 
\[G:=M(0)M(1) \cdots M(N-2)M(N-1)
\] 
are all positive.
Then there exist nonnegative functions  $\varphi(\cdot)$ and $\hat{\varphi}(\cdot)$ on $\cT\times\mathcal X$ satisfying
\begin{subequations}\label{eq:Schroedingersystem}
\begin{eqnarray}\label{Schroedingersystem1}
\varphi(t,i)&=&\sum_{j}m_{ij}(t)\varphi(t+1,j),
\\\hat{\varphi}(t+1,j)&=&\sum_{i}m_{ij}(t)\hat{\varphi}(t,i),\label{Schroedingersystem2}
\end{eqnarray}
for $t\in\{0,1,\cdots,N-1\}$, along with the (nonlinear) boundary conditions
\begin{eqnarray}\label{bndconditions1}
\varphi(0,x_0)\hat{\varphi}(0,x_0)&=&\nu_0(x_0)\\\label{bndconditions2}
\varphi(N,x_N)\hat{\varphi}(N,x_N)&=&\nu_N(x_N),
\end{eqnarray}
\end{subequations}
for $x_0, x_N\in\mathcal X$.
Moreover, the solution $\fM^*[\nu_0,\nu_N]$ to Problem \ref{prob:optimization} is unique and obtained by
\[
\fM^*(x_0,\ldots,x_N)=\nu_0(x_0)\pi_{x_0x_{1}}(0)\cdots \pi_{x_{N-1}x_{N}}(N-1),
\]
where the one-step transition probabilities
\begin{equation}\label{OPTTRANSITION1}
\pi_{ij}(t):=m_{ij}(t)\frac{\varphi(t+1,j)}{\varphi(t,i)}
\end{equation}
are well defined.
\end{theorem}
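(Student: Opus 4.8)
The plan is to reduce the dynamic problem on feasible paths to the static one-step Schrödinger system already analyzed in Theorem \ref{Fundtheorem}, and then to propagate the solution back through the intermediate times. First I would carry out the by-now standard disintegration of relative entropy, exactly as in the derivation of Problem \ref{static} from Problem \ref{bridge}: for any $P\in\mathcal P(\nu_0,\nu_N)$ write $P(x_0,\dots,x_N) = P_{0N}(x_0,x_N)\,P(x_1,\dots,x_{N-1}\mid x_0,x_N)$ and likewise factor $\fM$ using its Markov structure \eqref{prior}. The cross-terms separate, and since the conditional bridge $\fM(\cdot\mid x_0,x_N)$ can be matched freely by $P(\cdot\mid x_0,x_N)$ at zero cost, the minimization collapses to minimizing $\D(P_{0N}\,\|\,\fM_{0N})$ over couplings in $\Pi(\nu_0,\nu_N)$, where $\fM_{0N}(x_0,x_N)=\mu_0(x_0)\,G_{x_0x_N}$ with $G=M(0)\cdots M(N-1)$ as in the statement. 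This is precisely Problem \ref{static}/\eqref{constrainedfree} in the discrete setting with kernel matrix $G$.

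Next, since by hypothesis $G$ has strictly positive entries, I would invoke Theorem \ref{Fundtheorem} (applied with $g_{ij}=G_{ij}$, or more precisely with the kernel $\mu_0(x_0)G_{x_0x_N}$ normalized appropriately — one folds $\mu_0>0$, guaranteed by \eqref{eq:mupositive}, into $\hat\varphi$) to obtain positive vectors $\hat\varphi(0,\cdot)$ and $\varphi(N,\cdot)$, unique up to the reciprocal scalar, solving
\[
\hat\varphi(0,i)\sum_j G_{ij}\varphi(N,j) = \nu_0(i),\qquad \varphi(N,j)\sum_i G_{ij}\hat\varphi(0,i) = \nu_N(j).
\]
Then I would \emph{define} the intermediate functions by forward/backward propagation along the prior kernels: set $\varphi(t,i)=\sum_j m_{ij}(t)\varphi(t+1,j)$ running backward from $t=N-1$ to $t=0$, and $\hat\varphi(t+1,j)=\sum_i m_{ij}(t)\hat\varphi(t,i)$ running forward from $t=0$. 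By construction these satisfy \eqref{Schroedingersystem1}--\eqref{Schroedingersystem2}, and telescoping the compositions gives $\varphi(0,i)=\sum_j G_{ij}\varphi(N,j)$ and $\hat\varphi(N,j)=\sum_i G_{ij}\hat\varphi(0,i)$, so the boundary conditions \eqref{bndconditions1}--\eqref{bndconditions2} follow from the static relations above. Positivity of all entries of $G$ forces $\varphi(t,\cdot)>0$ (each $\varphi(t,i)$ is a sum over a path eventually reaching the positive $\varphi(N,\cdot)$, and strong connectivity plus positivity of $G$ guarantees nonempty such paths), hence the transition probabilities \eqref{OPTTRANSITION1} are well defined.

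Finally I would verify optimality and uniqueness. For optimality, plug the candidate $\fM^*(x_0,\dots,x_N)=\nu_0(x_0)\prod_t \pi_{x_tx_{t+1}}(t)$ with $\pi_{ij}(t)=m_{ij}(t)\varphi(t+1,j)/\varphi(t,i)$ into the Lagrangian for Problem \ref{prob:optimization} — introduce multipliers for the two marginal constraints only — and check that the first-variation (Euler--Lagrange) conditions are met; strict convexity of $\D(\cdot\,\|\,\fM)$ on $\mathcal P(\nu_0,\nu_N)$ then makes the stationary point the unique global minimizer. Equivalently, one checks directly that $\fM^*$ has the prescribed marginals (the telescoping again) and that $\fM^*$ lies in the same reciprocal class as $\fM$, so the disintegration argument of the first step identifies it as the minimizer. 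Uniqueness up to the scalar $\alpha$ is inherited verbatim from the uniqueness clause of Theorem \ref{Fundtheorem}. The main obstacle I anticipate is purely bookkeeping rather than conceptual: making the disintegration/telescoping step fully rigorous when $\fM$ is only a nonnegative (not normalized) measure whose support may be a proper subset of $\mathcal X^{N+1}$ — one must track the convention $0\cdot\log 0=0$, confirm that $\support(\fM^*)\subseteq\support(\fM)$ so that $\D(\fM^*\|\fM)<\infty$, and ensure the positivity of $G$ is used exactly where needed (to place all relevant $\varphi(t,i)$ strictly inside the positive orthant so that the ratios defining $\pi_{ij}(t)$ never produce $0/0$).
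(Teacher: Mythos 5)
Your proposal matches what the paper intends but does not spell out: the theorem is stated with the remark that it ``may be proven in the usual way, cf.\ Section \ref{SCHRSINK} and \cite{PT2,CGPT1,CGPT2},'' and the usual way is exactly your chain of reasoning — disintegrate $\D(P\|\fM)$ with respect to the end-point pair to collapse the dynamic problem to the static coupling problem with kernel $\fM_{0N}(x_0,x_N)=\mu_0(x_0)G_{x_0x_N}$, invoke Theorem~\ref{Fundtheorem} (with $\mu_0>0$ absorbed into $\hat\varphi(0,\cdot)$), then define the intermediate $\varphi$, $\hat\varphi$ by propagating backward and forward along the $M(t)$, telescoping to recover the boundary conditions. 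Uniqueness of $\fM^*$ from strict convexity and uniqueness of the ray from Theorem~\ref{Fundtheorem} also follow as you describe.

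One small imprecision worth flagging: you assert that positivity of $G=M(0)\cdots M(N-1)$ ``forces $\varphi(t,\cdot)>0$'' for all intermediate $t$. That is not literally implied: positivity of the full product does not prevent a trailing partial product $M(t)\cdots M(N-1)$ from having a zero row, in which case $\varphi(t,i)$ can vanish. What saves the conclusion is that in such a case the corresponding $\hat\varphi(t,i)$ must also vanish, i.e.\ the state $(t,i)$ carries no mass under $\fM^*$, so the $0/0$ ratio in \eqref{OPTTRANSITION1} occurs only on unvisited states and the representation of $\fM^*$ is still well defined. The paper's phrase ``are well defined'' is to be read in that supported sense; your parenthetical argument from strong connectivity alone does not quite close this, though the theorem as stated (and as proven in the cited references) holds.
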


As usual, factors $\varphi$ and $\hat{\varphi}$ are unique up to multiplication of $\varphi$ by a positive constant and division of $\hat{\varphi}$ by the same constant.
Let $\varphi(t)$ and $\hat{\varphi}(t)$ denote the column vectors with components $\varphi(t,i)$ and $\hat{\varphi}(t,i)$, respectively, with $i\in\mathcal X$. In matricial form, (\ref{Schroedingersystem1}), (\ref{Schroedingersystem2}
) and (\ref{OPTTRANSITION1}) read
\begin{equation}
\varphi(t)=M(t)\varphi(t+1),\; ~~\hat{\varphi}(t+1)=M(t)^T\hat{\varphi}(t),
\end{equation}
and
\begin{equation}
	\Pi(t):=[\pi_{ij}(t)]=\diag(\varphi(t))^{-1}M(t)\diag(\varphi(t+1)).
\end{equation}
We see that the scheduling of the transport plan amounts to modifying the prior transition mechanism.  Therefore, this brings us to an alternative interpretation of the Schr\"odinger Problem, as a special Markov Decision Processes' problem \cite{Put,Berts}. This can be accomplished, once more, through a generalized multiplicative functional transformation (\ref{multfunct}) even when the prior is not a probability measure.

\subsection{Invariance of most probable paths}

In  \cite[Section 5]{DP}, Dai Pra established an interesting path-space  property of the Schr\"{o}dinger bridge for diffusion processes, namely that the ``most probable path" \cite{DB,TW} of the prior and the solution are the same. Loosely speaking, a most probable path is similar to a {\em mode} for the path space measure $P$.  More precisely, if both drift $b(\cdot,\cdot)$ and diffusion coefficient $\sigma(\cdot,\cdot)$ of the Markov diffusion process
\[dX_t=b(t,X_t)dt+ \sigma(t,X_t)dW_t
\]
are smooth and bounded, with $\sigma(t,x)\sigma(t,x)'>\eta I$, $\eta>0$, and $x(t)$ is a path of class $C^2$, then there exists an asymptotic estimate of the probability $P$ of a small tube around $x(t)$ of radius $\epsilon$. It  follows from this estimate that the most probable path is the minimizer in a deterministic calculus of variations problem where the Lagrangian is an {\em Onsager-Machlup functional}, see \cite[p.\ 532]{IW} for the full story{\footnote {The Onsager-Machlup functional was introduced in {\cite{OM} to develop a theory of fluctuations in equilibrium and nonequilibrium thermodynamics.} }.

The concept of most probable path is, of course, much simpler in our discrete setting. We now define this for general positive measures on paths.

Given a positive measure $\fM$ as in Section \ref{GBProblems} on the feasible paths of our graph $\bf G$, we say that $x=(x_0,\ldots,x_N)\in {\mathcal {FP}}_0^N$ is of {\em maximal mass} if for all other feasible paths $y\in{\mathcal {FP}}_0^N$ we have $\fM(y)\le\fM(x)$. Likewise we consider paths of {\em maximal mass} connecting particular nodes. It is apparent that paths of maximal mass always exist but are, in general, not unique. If $\fM$ is a probability measure, then the maximal mass paths--most probable paths, are simply the modes of the distribution. We establish below that the maximal mass paths joining two given nodes under the solution of a Schr\"odinger Bridge problem as in the previous section are the same as for the prior measure.

\begin{proposition} \label{invariance}
Consider marginals $\nu_0$ and $\nu_N$ in Problem \ref{prob:optimization}. Assume that $\nu_0(x)>0$ on all nodes 
$x\in\mathcal X$ and that the product $M(0)M(1) \cdots M(N-2)M(N-1)$ of transition  matrices of the prior has all positive elements (cf.\ with $M$'s as in \eqref{eq:matrixM}). Let  $x_0$ and $x_N$ be any two nodes. Then, under the solution $\fM^*[\nu_0,\nu_N]$ of the SBP, the family of maximal mass paths joining $x_0$ and $x_N$ in $N$ steps is the same as under the prior measure $\fM$.
\end{proposition}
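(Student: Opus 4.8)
The key observation is the explicit product structure of the optimal measure,
\[
\fM^*(x_0,\ldots,x_N)=\nu_0(x_0)\,\pi_{x_0x_1}(0)\cdots\pi_{x_{N-1}x_N}(N-1),
\qquad
\pi_{ij}(t)=m_{ij}(t)\frac{\varphi(t+1,j)}{\varphi(t,i)},
\]
supplied by Theorem \ref{solbridge}, together with the analogous formula $\fM(x_0,\ldots,x_N)=\mu_0(x_0)m_{x_0x_1}\cdots m_{x_{N-1}x_N}$ for the prior. The plan is to fix two nodes $x_0=a$ and $x_N=b$, restrict attention to feasible paths $y=(y_0,\ldots,y_N)$ with $y_0=a$, $y_N=b$, and compare $\fM^*(y)$ with $\fM(y)$ along such paths. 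Substituting the definition of $\pi_{ij}(t)$ into the formula for $\fM^*$, the ratios $\varphi(t+1,y_{t+1})/\varphi(t,y_t)$ telescope: the product over $t=0,\ldots,N-1$ collapses to $\varphi(N,y_N)/\varphi(0,y_0)=\varphi(N,b)/\varphi(0,a)$, which does \emph{not} depend on the interior nodes $y_1,\ldots,y_{N-1}$. Hence
\[
\fM^*(y)=\nu_0(a)\,\frac{\varphi(N,b)}{\varphi(0,a)}\,m_{y_0y_1}\cdots m_{y_{N-1}y_N}
=\underbrace{\frac{\nu_0(a)}{\mu_0(a)}\cdot\frac{\varphi(N,b)}{\varphi(0,a)}}_{=:K(a,b)}\;\fM(y).
\]

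Thus along the set of feasible paths from $a$ to $b$, the measures $\fM^*$ and $\fM$ differ only by the multiplicative constant $K(a,b)$, which is strictly positive: $\nu_0(a)>0$ by hypothesis, $\mu_0(a)>0$ by \eqref{eq:mupositive}, and $\varphi(0,a),\varphi(N,b)$ are positive since, under the assumption that $G=M(0)\cdots M(N-1)$ has all positive entries, Theorem \ref{solbridge} yields $\varphi,\hat\varphi$ that one checks to be everywhere positive (the same positivity argument as in the proof of Theorem \ref{Fundtheorem}, or directly from \eqref{bndconditions1}--\eqref{bndconditions2} together with $\nu_0,\nu_N>0$ on the relevant nodes; if $\nu_N(b)=0$ one must argue $\varphi(N,b)>0$ separately from \eqref{Schroedingersystem1} and positivity of $G$). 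Since multiplication by a fixed positive constant is an order isomorphism on $(0,\infty)$, we conclude that for feasible paths $y,y'$ from $a$ to $b$ one has $\fM^*(y)\le\fM^*(y')\iff\fM(y)\le\fM(y')$. In particular the set of maximizers of $\fM^*$ over paths from $a$ to $b$ coincides with the set of maximizers of $\fM$ over the same set, which is exactly the claim.

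The routine steps are the telescoping computation and the bookkeeping of which paths are feasible; the only genuine point requiring care is establishing strict positivity of $\varphi(0,a)$ and $\varphi(N,b)$ (equivalently, that the constant $K(a,b)$ is finite and nonzero) in the degenerate case where the prescribed marginals $\nu_0$ or $\nu_N$ may vanish at some nodes other than $a,b$ — here one invokes the positivity of the product matrix $G$ and equations \eqref{Schroedingersystem1}--\eqref{Schroedingersystem2} to propagate positivity of $\varphi$ and $\hat\varphi$ across all times, exactly as in the argument completing the proof of Theorem \ref{Fundtheorem}. I expect this positivity verification to be the main (and essentially the only) obstacle; everything else is an immediate consequence of the multiplicative-functional form of the Schr\"odinger bridge.
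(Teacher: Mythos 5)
Your proof is correct and follows essentially the same route as the paper: substitute the multiplicative-functional form of the transition probabilities $\pi_{ij}(t)$, telescope the ratios $\varphi(t+1,\cdot)/\varphi(t,\cdot)$ along the path to obtain $\fM^*(y)=\frac{\nu_0(x_0)}{\mu_0(x_0)}\frac{\varphi(N,x_N)}{\varphi(0,x_0)}\fM(y)$, and observe that this factor is positive and independent of the interior nodes, so the maximizers coincide. The only difference is that you elaborate on establishing the strict positivity of $\varphi(0,x_0)$ and $\varphi(N,x_N)$, which the paper treats as implicit in the statement of Theorem \ref{solbridge} (the transition probabilities being ``well defined'' there requires $\varphi(t,i)>0$).
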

\proof
{Suppose path $y=(y_0=x_0,y_1,\ldots, y_{N-1},y_N=x_N)$ has maximal mass under the prior $\fM$. In view of (\ref{prior}) and (\ref{OPTTRANSITION1}) and assumption (\ref{eq:mupositive}), we have
\begin{eqnarray}\nonumber\fM^*[\nu_0,\nu_N](y)&=&\nu_0(y_0)\pi_{y_0y_1}(0)\cdots \pi_{y_{N-1}y_N}(N-1)\\
&=&\frac{\nu_0(x_0)}{\mu_0(x_0)}\frac{\varphi(N,x_N)}{\varphi(0,x_0)}\fM(y_0,y_1,\ldots,y_N).\nonumber
\end{eqnarray}
Since the quantity
\[\frac{\nu_0(x_0)}{\mu_0(x_0)}\frac{\varphi(N,x_N)}{\varphi(0,x_0)}
\]
is positive and does not depend on the particular path joining $x_0$ and $x_N$, the conclusion follows.}
\qed

The above calculation establishes in fact the following stronger result. 
\begin{proposition} Let  $x_0$ and $x_N$ be any two nodes in $\mathcal X$.
Then, under the assumptions of Proposition \ref{invariance}, the measures $\fM$ and $\fM^*[\nu_0,\nu_N]$, restricted on the set of paths that begin at $x_0$ at time $0$, and end at $x_N$, at time $N$, are identical.
\end{proposition}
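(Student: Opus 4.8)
The plan is to push the computation already performed in the proof of Proposition~\ref{invariance} a little further, keeping track of \emph{every} feasible path from $x_0$ to $x_N$ rather than only the ones of maximal mass. First I would start from the product representation of the bridge supplied by Theorem~\ref{solbridge}, namely $\fM^*[\nu_0,\nu_N](x_0,\ldots,x_N)=\nu_0(x_0)\,\pi_{x_0x_1}(0)\cdots\pi_{x_{N-1}x_N}(N-1)$, and insert the explicit form \eqref{OPTTRANSITION1} of the optimal one-step transition probabilities $\pi_{ij}(t)=m_{ij}(t)\,\varphi(t+1,j)/\varphi(t,i)$. Along any fixed path $y=(y_0=x_0,y_1,\ldots,y_{N-1},y_N=x_N)$ the ratios $\varphi(t+1,y_{t+1})/\varphi(t,y_t)$ telescope, leaving only $\varphi(N,x_N)/\varphi(0,x_0)$; combining this with \eqref{prior} gives
\[
\fM^*[\nu_0,\nu_N](y)=\nu_0(x_0)\,\frac{\varphi(N,x_N)}{\varphi(0,x_0)}\prod_{t=0}^{N-1}m_{y_ty_{t+1}}(t)=\frac{\nu_0(x_0)}{\mu_0(x_0)}\,\frac{\varphi(N,x_N)}{\varphi(0,x_0)}\,\fM(y),
\]
which is precisely the identity already extracted inside the proof of Proposition~\ref{invariance}.

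The one point that turns this into a theorem rather than a tautology is that the proportionality factor $c(x_0,x_N):=\dfrac{\nu_0(x_0)}{\mu_0(x_0)}\dfrac{\varphi(N,x_N)}{\varphi(0,x_0)}$ depends only on the two endpoints $x_0,x_N$ and on the fixed Schr\"odinger data, and \emph{not} on the intermediate vertices $y_1,\ldots,y_{N-1}$. I would also record that the factor is well posed: $\mu_0(x_0)>0$ by assumption \eqref{eq:mupositive}, while $\varphi(0,x_0)>0$ because $\varphi(0,x_0)\hat\varphi(0,x_0)=\nu_0(x_0)>0$ by \eqref{bndconditions1} together with the hypothesis that $\nu_0$ is strictly positive on $\mathcal X$.

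From here the conclusion is immediate: restricted to the set $\{y\in\mathcal{FP}_0^N:\,y_0=x_0,\ y_N=x_N\}$ the measures $\fM$ and $\fM^*[\nu_0,\nu_N]$ are proportional with the path-independent constant $c(x_0,x_N)$, so after the natural normalization they coincide --- equivalently, the conditional law of the full trajectory given $\{X_0=x_0,\,X_N=x_N\}$ is the same under the prior $\fM$ and under the bridge $\fM^*[\nu_0,\nu_N]$, which is the discrete counterpart of the statement that the Schr\"odinger bridge stays in the same reciprocal class as its prior. I do not anticipate any genuine obstacle: the substantive content is the telescoping identity, which is already in hand, together with the observation that $c(x_0,x_N)$ carries no dependence on the interior of the path. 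The only thing demanding a word of care is the meaning of ``identical'': as written the restricted measures are proportional, and they are literally equal once normalized (that is, as conditional distributions), which is the honest reading when $\nu_N(x_N)>0$; if $\nu_N(x_N)=0$ then $c(x_0,x_N)=0$ and $\fM^*$ assigns no mass to any path ending at $x_N$, so there is nothing to prove.
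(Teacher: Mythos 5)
Your proposal is correct and takes essentially the same approach as the paper, whose entire "proof'' consists of the remark that the telescoping computation from Proposition~\ref{invariance} already yields the claim; you reproduce that computation and spell out the one key point (the proportionality factor $c(x_0,x_N)$ is path-independent). Your caveat about the word ``identical'' is also well taken --- the restricted measures are literally only proportional, and the honest reading, as you note, is equality of the conditional (bridge) laws given the endpoints, i.e.\ membership in the same reciprocal class.
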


\subsection{Robust network routing}\label{RNR}
We now discuss yet another possible usage and interpretation of Schr\"odinger Bridges, motivated by a concept of robustness of a transportation plan for a given network.

Network robustness is typically understood as the ability of a network to maintain connectivity, or to be insensitive (observables), in the event of node or link failures, or a disturbance.
Maintaining connectivity may be seen as an {\em inverse percolation problem} \cite{Bar1,Bar2}. There exist several other notions of robustness such the one defined through a {\em fluctuation-dissipation relation} involving the {\em topological entropy rate}. This notion captures the behaviour while relaxing back to equilibrium after a perturbation, see \cite{Dem1,Dem2,Sandhu1,Sandhu2}. Also, robust network design to meet demands in a given uncertainty set was studied in \cite{Olver}. Finally, a concept of {\em resilience} of a routing policy in  the presence of {\em cascading failures} introduced and studied  in \cite{SCD}. Following the latter rationale, linking robustness to resilience, equilibrium considerations play no role. Indeed, this rationale motivates maximal utilization of all available options equally, as much as possible, so as to preempt failures. 

Thus, we are led to formulate the following problem: Given times $t=0,1,\ldots,N$ and a directed, strongly connected graph, find a transportation plan from a {\em source node} to a {\em sink node}\footnote{We always have a loop on the sink node to allow part of the mass to arrive there earlier than the planned time horizon.} such that most of the mass arrives by time $N$ even in the presence of  failures (of course, more general initial and final distributions can also be similarly treated). For instance, consider the graph of Figure \ref{fig:graph}, 
with the total mass residing at node $1$ at time $t=0$, and the requirement to transport the total mass to node $9$ in $N=3$, or $N=4$, time steps.
A natural idea is to spread the mass as much as the topology of graph permits, before reassembling the mass at the target end-point distribution (here at node $9$). The property of ``spreading'' of distributions along interpolation between end-point marginals, whether in OMT or Schr\"odinger Bridge problems, is often referred to as ``lazy gas'' \cite{Vil,Vil2}. Thus, it is natural to consider transportation plans that share such a property via utilizing the framework of Schr\"{o}dinger bridges!

However, the current task, to transport provides no ``prior'' measure. 
It is simply a problem in transportation. Can we select a suitable measure? Perhaps, select as prior a prespecified plan that no longer meets desired transportation requirements? Or, modify a transportation plan by adding costs to mediate congestion? All of these directions are possible and can be profitably pursued in engineering problems. Yet, herein we motivate a different prior, one that {\em maximally} spreads mass utilizing available options. Indeed, such a prior must be a sort of {\em uniform distribution} on paths. But what does that exactly mean in this setting? Fortunately, such a notion already exists, and for this we follow the interesting paper  \cite{dellib}. But let us first recall a most famous result in Linear Algebra \cite{Horn}.

\begin{figure}[htbp]
\begin{center}
\includegraphics[width=0.45\textwidth]{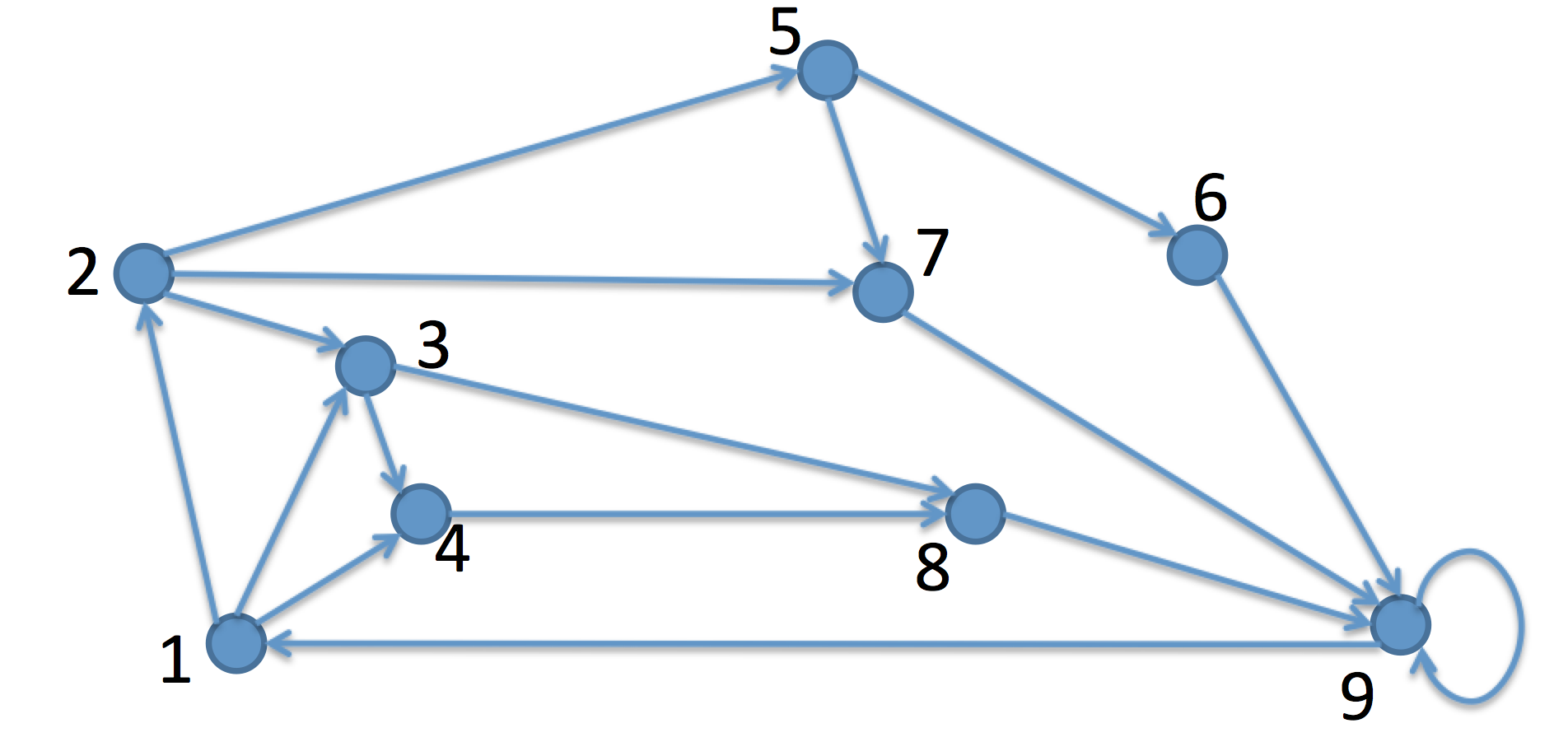}
\caption{Transportation network}\label{fig:graph}
\end{center}
\end{figure}

\begin{theorem}[Perron-Frobenius]\label{perrontheorem}
Let $A=\left(a_{ij}\right)$ be an $n\times n$ matrix with nonnegative entries. Let $\lambda_A=\max\{|\lambda_1|,|\lambda_2|,\ldots,|\lambda_n|\}$ be its spectral radius. Suppose there exists $N$ such that $A^N$ has only positive entries. Then
\begin{enumerate}
\item[i)] $\lambda_A>0$ is an eigenvalue of $A$;
\item[ii)] $\lambda_A$ is a simple eigenvalue;
\item[iii)] there exists an eigenvector $v$ corresponding to $\lambda_A$ with strictly positive entries;
\item[iv)] $v$ is the only non-negative eigenvector of A;
\item[v)] \label{item_v} let $B=[b_{ij}]$ be an $n\times n$ matrix with nonnegative entries. If $a_{ij}\le b_{ij}$, $\forall i,j\leq n$ and $A\neq B$, then $\lambda_A<\lambda_B$. 
\end{enumerate}
\end{theorem}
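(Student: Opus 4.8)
The statement to be proven is the classical Perron--Frobenius theorem. I will treat it as a standard result whose proof is well documented, and sketch the route I would take rather than reproduce every detail.

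\textbf{Overall strategy.} The standing hypothesis is that $A^N$ has strictly positive entries for some $N$; such a matrix is called \emph{primitive}. The most economical route nowadays is to invoke the Birkhoff--Bushell machinery already set up in the preceding subsection: the linear map $x \mapsto Ax$ restricted to the positive orthant $\R^n_+$ is nonnegative, and $A^N$ being strictly positive means the $N$-fold iterate is a \emph{positive} linear map with \emph{finite} projective diameter (the diameter is finite precisely because all entries of $A^N$ are positive, exactly as in the computation in Lemma~\ref{Linear}). Theorem~\ref{BBcontraction} then gives $\kappa(A^N) = \tanh(\frac14\Delta(A^N)) < 1$, so $A^N$ is a strict contraction on the complete metric space $\left({\rm int}\,\R^n_+ \cap U, d_H\right)$. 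By Theorem~\ref{BBcontraction2}(b) applied to $A^N$, there is a unique ray in ${\rm int}\,\R^n_+$ fixed by $A^N$, hence a positive vector $v$ with $A^N v = \mu v$ for some $\mu > 0$; and since $A$ maps this fixed ray of $A^N$ to another fixed ray of $A^N$ (because $A$ commutes with $A^N$), uniqueness of the ray forces $Av = \lambda_A v$ for some $\lambda_A > 0$. This simultaneously yields (i) and (iii).

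\textbf{Filling in the remaining claims.} For (ii), simplicity of $\lambda_A$: if $Aw = \lambda_A w$ for a vector $w$ not proportional to $v$, one forms $v + t w$ and chooses $t$ so that $v+tw$ lies on the boundary of $\R^n_+$ but is nonzero; applying $A^N$ maps it strictly into the interior while scaling it by $\mu$, contradicting the boundary position — this is the standard argument that an eigenvector for the top eigenvalue cannot touch the boundary. Algebraic simplicity (not merely geometric) follows by the same contraction argument applied to the quotient space, or by noting that a generalized eigenvector would generate an unbounded orbit under $\mu^{-1}A^N$, contradicting contraction; I would cite \cite{birkhoff1957extensions} or \cite{Horn} here rather than belabor it. For (iv), that $v$ is the \emph{only} nonnegative eigenvector: any nonnegative eigenvector $w$ with eigenvalue $\sigma$ satisfies $A^N w = \sigma^N w$ with $A^N w > 0$, so $w > 0$; then $w$ determines a fixed ray of $A^N$, which by uniqueness equals the ray of $v$, hence $w \parallel v$ and $\sigma = \lambda_A$. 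For the spectral radius claim — that $\lambda_A$ as produced above is actually $\max_i |\lambda_i|$ — one uses $|\lambda_i|^N \le \lambda_{A^N} = \mu = \lambda_A^N$ from Perron's bound for positive matrices (or Gelfand's formula $\lambda_{A^N} = \lim_k \|A^{Nk}\|^{1/k}$ combined with nonnegativity), giving $|\lambda_i| \le \lambda_A$ for every eigenvalue.

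\textbf{The monotonicity statement (v).} Suppose $0 \le a_{ij} \le b_{ij}$ entrywise with $A \ne B$. Let $v > 0$ be the Perron vector of $A$ and let $u > 0$ be the left Perron vector of $B$ (which exists since $B$ is also primitive — $B^N \ge A^N > 0$). Then
\[
\lambda_A \, \langle u, v\rangle = \langle u, A v\rangle \le \langle u, B v\rangle = \lambda_B \,\langle u, v\rangle,
\]
where the inequality is entrywise and strict in at least one coordinate because $A \ne B$, $u > 0$, $v > 0$; since $\langle u, v\rangle > 0$ this gives $\lambda_A < \lambda_B$ (strictness of the inequality survives because $\langle u, (B-A)v\rangle > 0$ strictly). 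I would write exactly this three-line computation.

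\textbf{Main obstacle.} Everything reduces cleanly to the Birkhoff--Bushell contraction results quoted in the excerpt, so the only genuine care is needed in deducing \emph{algebraic} (not just geometric) simplicity of $\lambda_A$ from contraction of the \emph{nonlinear-free}, purely linear iteration, and in passing from ``$A^N$ has a unique fixed ray'' to ``$A$ has an eigenvector'' — the commutation argument is short but must be stated. I would present the contraction argument in full for existence/positivity/uniqueness of the ray, do the $u$-vs-$v$ pairing for (v) explicitly, and cite \cite{Horn,birkhoff1957extensions} for the spectral-radius identification and the algebraic simplicity, since these are entirely classical.
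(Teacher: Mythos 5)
The paper does not prove this theorem; it states it with a citation to Horn and Johnson \cite{Horn} and provides no argument, so there is nothing in the paper to compare against line by line. That said, your route—deriving Perron--Frobenius from the Birkhoff--Bushell contraction results (Theorems \ref{BBcontraction} and \ref{BBcontraction2}) already developed in the section on the Hilbert projective metric—is exactly the connection the paper itself flags when it remarks that Theorem \ref{BBcontraction2} ``provides a far-reaching generalization of the celebrated Perron--Frobenius theorem.'' Your existence/positivity argument for the fixed ray of $A^N$, the commutation trick (``$A$ maps the unique fixed ray of $A^N$ to a fixed ray, hence fixes it'') to pass from $A^N$ to $A$, the deduction of (iv), the spectral-radius identification, and the three-line left-right pairing for (v) are all correct.

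There is, however, one genuine gap, in your treatment of (ii). Your suggested argument that ``a generalized eigenvector would generate an unbounded orbit under $\mu^{-1}A^N$, contradicting contraction'' does not work. If $(A-\lambda_A I)w=v$, then $\lambda_A^{-k}A^kw = w + k\lambda_A^{-1}v$, which indeed grows unboundedly in Euclidean norm, but the associated \emph{ray} converges to the ray of $v$, so there is no contradiction with contraction of the Hilbert metric, which is a metric on rays. Worse, $w$ need not lie in $\R^n_+$ at all, so its orbit is not even in the space where $d_H$ is defined. The clean fix, entirely in the same toolkit, is this: by the same Birkhoff--Bushell argument applied to $A^T$, there is a strictly positive left Perron vector $u$ with $u^T A = \lambda_A u^T$ (you already invoke this object in (v)). If $\lambda_A$ were not algebraically simple, geometric simplicity (which you argue correctly) would force a Jordan chain, so some $w$ would satisfy $(A-\lambda_A I)w=v$; pairing with $u$ gives $u^T v = u^T(A-\lambda_A I)w = (\lambda_A u^T - \lambda_A u^T)w = 0$, contradicting $u>0$, $v>0$. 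This costs nothing extra and closes the gap; your citations to \cite{Horn,birkhoff1957extensions} would otherwise be carrying more weight than the surrounding sketch suggests.
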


Returning to graphs we discuss first the notion of topological entropy, namely, the rate by which the cardinality of the set of paths of length $N$ increases, as $N\to \infty$.
To this end, we consider a strongly connected directed graph ${\bf G}=(\mathcal X,\mathcal E)$ as before.
The {\em topological entropy rate} is
\[
H_{\bf G}=\limsup_{N\rightarrow\infty}[\log |\{{\rm paths\; of \;length} \;N\}|/N].
\]
If $A$ denotes the {\em adjacency matrix} of the graph, it is easy to show\footnote{This follows from the fact that the $ij$th entry of $A^N$ (a positive integer) enumerates the number of distinct paths from vertix $i$ to vertix $j$, in $N$ steps.}
that  
\[H_{\bf G}= \log(\lambda_A).
\]
Since the graph is stongly connect, $A^N$ has only positive entries. Let $\hat \varphi$ and $\varphi$ be its left and right eigenvectors with positive entries corresponding to $\lambda_A$ (Theorem \ref{perrontheorem}), so that
\[A^T\hat\varphi =\lambda_A \hat\varphi , \quad A\varphi =\lambda_A \varphi,
\]
and select/scale those so that
\[
\langle \hat\varphi ,\varphi\rangle:=\sum_i \hat\varphi _i\varphi_i=1.
\]
Then
\begin{equation}\label{invariantmeasure1}
\nu_{RB}(i)=\hat\varphi _i\varphi_i  
\end{equation}
defines a probability distribution on ${\cal X}$  which is invariant under the transition matrix
\begin{equation}
R=[r_{ij}], \quad r_{ij}=\frac{1}{\lambda_A }\frac{\varphi_j}{\varphi_i}a_{ij}.\label{Opttransition1}  
\end{equation}
that is,
\[
R^T\nu_{RB}=\nu_{RB}.
\]

The transition matrix $R$ in (\ref{Opttransition1}), together with stationary measure $\nu_{RB}$ in (\ref{invariantmeasure1}), define the {\em Ruelle-Bowen }({\em Markovian}) {\em path measure}
\[
\fM_{\rm RB}(x_0,x_1,\ldots,x_N):=\nu_{RB}(x_0)r_{x_0x_1}\cdots r_{x_{N-1}x_N}.
\]
Equation \eqref{Opttransition1} brings up the unmistakeable links to the structure of Schr\"odinger Bridges that maximize entropy. But here, a deeper fact is at play, in that the Ruelle-Bowen distribution \cite{Parry,Ruelle} represents a uniform distribution on paths, made precise by the following remarkable proposition.
\begin{proposition} \label{uniformpath} The measure $\fM_{\rm RB}$ assigns probability $\lambda_A^{-t}\hat\varphi_i\varphi_j$ to any path of length $t$ from node $i$ to node $j$.
\end{proposition}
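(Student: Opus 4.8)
The plan is to obtain the formula by a short telescoping computation, reading off the structure that was deliberately built into the transition matrix $R$ in \eqref{Opttransition1}. Fix a feasible path $x=(x_0,x_1,\ldots,x_t)$ with $x_0=i$ and $x_t=j$, so that $x_kx_{k+1}\in\mathcal E$ for every $k$; since $A$ is the (binary) adjacency matrix of ${\bf G}$, feasibility of the path is equivalent to $a_{x_kx_{k+1}}=1$ for all $k$. By the definition of the Ruelle--Bowen path measure together with \eqref{invariantmeasure1} and \eqref{Opttransition1}, the mass it assigns to this path is
\[
\fM_{\rm RB}(x_0,\ldots,x_t)=\nu_{RB}(x_0)\,r_{x_0x_1}\cdots r_{x_{t-1}x_t}
=\hat\varphi_i\varphi_i\prod_{k=0}^{t-1}\frac{1}{\lambda_A}\,\frac{\varphi_{x_{k+1}}}{\varphi_{x_k}}\,a_{x_kx_{k+1}}.
\]

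First I would use $\prod_{k=0}^{t-1}a_{x_kx_{k+1}}=1$ along a feasible path to discard those factors; then the $t$ copies of $1/\lambda_A$ contribute a factor $\lambda_A^{-t}$, and the remaining product $\prod_{k=0}^{t-1}\varphi_{x_{k+1}}/\varphi_{x_k}$ telescopes to $\varphi_{x_t}/\varphi_{x_0}=\varphi_j/\varphi_i$. Substituting these into the display and cancelling the $\varphi_i$ from $\nu_{RB}(i)=\hat\varphi_i\varphi_i$ against the $1/\varphi_i$ produced by the telescoped product leaves exactly $\lambda_A^{-t}\hat\varphi_i\varphi_j$, as claimed. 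The punchline is that this value depends only on the endpoints $i,j$ and the number of steps $t$, and not at all on the intermediate vertices, which is precisely the sense in which $\fM_{\rm RB}$ is ``uniform on paths''.

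I do not expect any genuine obstacle here: the computation is elementary, and the only points deserving a word of care are the index bookkeeping in the telescoping product and the remark that one obtains $a_{x_kx_{k+1}}=1$ (not merely $\ge 1$) exactly because $A$ is a $0$--$1$ matrix, so that no residual edge multiplicities survive. If desired, one may round out the statement by recording that $R$ is row-stochastic --- indeed $\sum_j r_{ij}=(\lambda_A\varphi_i)^{-1}(A\varphi)_i=1$ by $A\varphi=\lambda_A\varphi$ --- and that $\nu_{RB}$ is $R$-invariant by construction, so that $\fM_{\rm RB}$ restricted to feasible paths of any fixed length is a \emph{bona fide} probability measure; this is not needed for the proposition but explains why the word ``probability'' is legitimate in its statement.
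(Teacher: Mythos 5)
Your argument is correct and matches the paper's proof: both start from $\nu_{RB}(x_0)\,r_{x_0x_1}\cdots r_{x_{t-1}x_t}$, use $r_{ij}=\lambda_A^{-1}\varphi_i^{-1}\varphi_j a_{ij}$ with $a_{x_kx_{k+1}}=1$ along feasible paths, and telescope the $\varphi$ ratios to cancel the intermediate nodes. The only cosmetic difference is that the paper builds up the formula step by step ($ij$, then $ijk$, ...) whereas you write the full product at once; the substance is identical, and your closing remark on row-stochasticity of $R$ is a nice, correct aside.
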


\begin{proof} Starting from the stationary distribution \eqref{invariantmeasure1}, and in view of \eqref{Opttransition1}, the probability of a path $ij$ is
\[
\hat \varphi_i\varphi_i \left(\frac{1}{\lambda_A} \varphi_i^{-1}\varphi_j\right)=\frac{1}{\lambda_A}\hat \varphi_i\varphi_j,
\]
assuming that node $j$ is accessible from node $i$ in one step.
Likewise, if node $k$ is accessible from $j$, the probability of the path $ijk$ is
\[
\hat \varphi_i\varphi_i \left(\frac{1}{\lambda_A} \varphi_i^{-1}\varphi_j\right)\left(\frac{1}{\lambda_A} \varphi_j^{-1}\varphi_k\right)=\frac{1}{\lambda_A^2}\hat \varphi_i\varphi_k
\]
independent of the intermediate state $j$, and so on.
\end{proof}

Shannon entropy of paths of length $N$ grows like $N\log\lambda_A$. Thus, since the {\em entropy rate} of this particular distribution is $\log\lambda_A=H_{\bf G}$, it is indeed the maximum possible!

We note that the stationary measure $\nu_{RB}$ of this law was used in \cite{dellib} 
in order to obtain a  {\em centrality measure} (entropy ranking) similar to the Google Page ranking but more robust and discriminating. In the exposition herein, instead, $\fM_{\rm RB}$ is a natural choice as a  prior distribution in the Schr\"{o}dinger bridge problem to achieve the spreading of the mass. Moreover, as just noted earlier, the Ruelle-Bowen measure $\fM_{\rm RB}$ on paths may be itself seen as the solution of a Schr\"{o}dinger bridge problem where the ``prior" transition matrix is the adjacency matrix $A$ and the two marginals are $\nu_0=\nu_N=\nu_{RB}$, see \cite[Section 4]{CGPT1}. 

Returning to the transportation problem, we seek
 \[
\fM^*[\delta_1,\delta_n]={\rm argmin}\{ \D(P\|\fM_{\rm RB}) \mid  P\in {\mathcal P}(\delta_1,\delta_n)\}
\]
By Theorem \ref{solbridge}, the solution is the Markovian evolution starting at $t=0$ with the distribution $\delta_1$ and with transition matrix 
\[
\Pi^*(t)=\diag(\varphi(t))^{-1}R \diag(\varphi(t+1)),
\]
where
\[
	\varphi(t)=R\varphi(t+1), ~\hat{\varphi}(t+1)=R^T\hat{\varphi}(t),
\]
with the boundary conditions
\[
\varphi(0,x)\hat{\varphi}(0,x)=\delta_{1}(x),\;\varphi(N,x)\hat{\varphi}(N,x)=\delta_{n}(x), \quad \forall x\in\mathcal X.
\]
 \noindent
 Thus, the solution $\fM^*[\delta_1,\delta_n]$ is a bridge over $\fM_{\rm RB}$ which is itself a bridge. Namely the solution is a {\em bridge over a bridge}. 
 
 We conclude with a remarkable  {\em iterated bridge property} of the Schr\"{o}dinger bridges (not to be confused with an iterated I-projection property \cite{csiszar0,csiszar1}).  Suppose first the prior is a probability distribution; this defines a {\em reciprocal class} \cite{Jam1,levy1990modeling} of distributions. The solution to the Schr\"odinger Bridge problem is in fact the unique Markovian evolution in the same reciprocal class as the prior (same three times transition probabilities). I.e., if we take the solution as a prior for a new bridge problem, the reciprocal class stays the same. Notice that this is the case even when there is loss/creation of mass in the prior evolution, see \cite[Section IIA]{CGPT2} for the details. Hence, the new distribution on paths $\fM^*[\delta_1,\delta_n]$ can be obtained solving a {\em unique} bridge problem with prior transition the adjacency matrix $A$ and marginals $\delta_1$ and $\delta_n$, see \cite{CGPT1}. The solution may be computed through an iterative algorithm like the one  described in Section \ref{ITERATIVEALGORITHM} where $G=A^N$.  We exemplify the steps and rationale presented with the following academic exercise.

\begin{example}\em 
Consider Figure \ref{fig:graph} and let $\nu_0=\delta_1$, $\nu_N=\delta_9$. Take first $N=3$. The shortest path from node $1$ to $9$ is of length $3$ and there are three such paths, which are
$1-2-7-9$, $1-3-8-9$ and $1-4-8-9$.  Using $\fM_{\rm RB}$ as the prior, then we get a transport plan with {\em equal probabilities} for all these three paths. The evolution of the mass distribution is given by the rows of the following matrix, row $i$ representing the distribution on the nodes at time $t=i, i=0,1,2,3$
 \[ 
       \left[
       \begin{matrix}
       1 & 0 & 0 & 0 & 0 & 0 & 0 & 0 & 0\\
       0 & 1/3 & 1/3 & 1/3 & 0 & 0 & 0 & 0 & 0\\
       0 & 0 & 0 & 0 & 0 & 0 & 1/3 & 2/3 & 0\\
       0 & 0 & 0 & 0 & 0 & 0 & 0 & 0 & 1
       \end{matrix}
       \right].
\]

 For $N=4$, the mass spreads even more before reassembling at node $9$
 \begin{center}
 \[
       \left[
       \begin{matrix}
       1 & 0 & 0 & 0 & 0 & 0 & 0 & 0 & 0\\
       0 & 4/7 & 2/7 & 1/7 & 0 & 0 & 0 & 0 & 0\\
       0 & 0 & 1/7 & 1/7 & 2/7 & 0 & 1/7 & 2/7 & 0\\
       0 & 0 & 0 & 0 & 0 & 1/7 & 1/7 & 2/7 & 3/7\\
       0 & 0 & 0 & 0 & 0 & 0 & 0 & 0 & 1
       \end{matrix}
       \right]
\]
\end{center}
\end{example}

\subsection{Optimal flows on weighted graphs}

With the same notation as in Section \ref{GBProblems}, we now suppose that to each edge $ij$ is associated a length $l_{ij}\ge 0$. If $ij\not\in\mathcal E$, we set $l_{ij}=+\infty$. The length may represent distance, cost of transport, cost of communication, inverse capacity of the link, and so on. As an example, suppose a relief organization, operating in an area where a natural disaster has occurred or an epidemic  or in a war zone, needs to transport resources. At the initial time $t=0$, there is a  distribution  $\nu_0(x)$ of available relief goods in sites $x\in\cal X$. Using the available road network, the goods must reach certain other locations after $N$ units of time to be distributed according to a desired distribution $\nu_N(x)$.  On the one hand, since the feasibility of the various possible routes is uncertain, it is desirable that the goods spread as much as the road network allows before reaching the target nodes. But at the same time, it is also important that shorter paths are used to keep the fuel consumption within the available budget.
In such a scenario it is possible to repeat the construction of Section\ref{RNR}, replacing the adjacency matrix $A$ with a weighted adjacency matrix $B$
\[B=\left[b_{ij}\right]=\left[\exp\left(-l_{ij}\right)\right],
\]
again assuming that $B^N$ has positive entries representing cost, thereby allowing us to employ the Perron-Frobenius theorem.

The measure $\fM_L$ that replaces the Ruelle-Bowen measure is, of course, no longer ``uniform".  Given the development of Section \ref{thermostatics}, we already know what we can expect. Rather than maximizing entropy, it is natural to seek a compromise between the latter goal and that of minimizing length/energy/cost. Indeed, let $\hat\varphi$ and $\varphi$ be left and right eigenvectors with positive entries of the matrix $B$ corresponding to the spectral radius $\lambda_B$ of $B$, so that
$$B^T\hat\varphi=\lambda_B \hat\varphi, \quad B\varphi=\lambda_B \varphi.
$$ 
Suppose once again that $\hat\varphi$ and $\varphi$ are chosen so that $\langle \hat\varphi,\varphi\rangle=\sum_i \hat\varphi_i\varphi_i=1$. Then $\mu_L$ given by
\begin{equation}\label{invariantmeasure}
\mu_L(i)=\hat\varphi_i\varphi_i
\end{equation}
is a probability distribution which is invariant for the transition matrix
\begin{equation}\label{opttransition}R_L=\lambda_B^{-1}\diag(\hat\varphi_i\varphi_i)^{-1}B\diag(\hat\varphi_i\varphi_i), 
\end{equation}
namely
\begin{equation}\label{optevol}
R_L^T\mu_L=\mu_L.
\end{equation}
As expected, the corresponding path measure $\fM_L$ is no longer uniform on paths of equal length joining two specific nodes. Indeed, the probability of the path $(i=x_0,x_1,\ldots, x_{t-1},j=x_t)$ is 
\[
	\lambda_B^{-t}\exp (-\sum_{k=0}^{t-1}l_{x_k x_{k+1}})\hat\varphi_i \varphi_j.
\]  
It is namely the minimum {\em free energy rate} distribution  (topological pressure in thermodynamics) attaining the minimum value,
which is $-\log\lambda_B$, and has therefore the form of a {\em Boltzmann distribution} (\ref{Boltdistr}), see \cite[Section IV]{dellib} for details.  Indeed, for a path $x=(x_0,\ldots,x_N)\in\mathcal X^{N+1}$,  define the length of $x$ to be 
\[l(x)=\sum_{t=0}^{N-1}l_{x_tx_{t+1}},
\]
and for any distribution $P$ on ${\cal X}^{N+1}$, the {\em average path length}
\begin{equation}\label{internal}
L(P)=\sum_{x\in\mathcal X^{N+1}}l(x)P(x).
\end{equation}
This plays the same role as the internal energy in the state $P$  of Section \ref{thermostatics}, corresponding  the length $l(x)$ of $x$ with the energy $E_x$. Clearly, $L(P)$ is finite if and only if $P$ is supported  on actual, existing paths of ${\bf G}$. The Boltzmann distribution (\ref{Boltdistr}) on $X^{N+1}$ is then
\begin{equation}\label{pathBoltdistr}p_B(x)=\fM_L(x)=Z(T)^{-1}\exp\left[-\frac{l(x)}{kT}\right],\mbox{ for }Z(T)=\sum_{x\in \mathcal X}\exp\left[-\frac{l(x)}{kT}\right].
\end{equation}
Note that the the support of the Boltzmann distribution $\support(p_B)$ is contained in ${\mathcal {FP}}_0^N$. 
By statement v) in Theorem \ref{perrontheorem}, we then have that $\log\lambda_A<\log\lambda_B$, namely, the topological entropy increases in a way that is consistent with intuition. 

We can  take $\fM_L$  as the prior distribution in a maximum entropy problem as in Section \ref{RNR} obtaining again through the solution $\fM^*_L[\delta_{1},\delta_{n}]$ a robust-efficient transportation plan from node $1$ to node $n$. 
We are now ready to prove a striking result which generalizes Proposition \ref{uniformpath}.

\begin{theorem}\label{robustOMT} $\fM_L^*[\delta_{1},\delta_{n}](x)$ assigns equal probability to paths $x\in\mathcal X^{N+1}$ of equal cost. In particular, it assigns maximum and equal probability to minimum length paths.
\end{theorem}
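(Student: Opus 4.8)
The plan is to prove that, on its support, the Schr\"odinger bridge $\fM_L^*[\delta_1,\delta_n]$ is a fixed positive multiple of the prior $\fM_L$, and then to invoke the explicit product form of $\fM_L$ recalled just above the theorem, $\fM_L(x)=\lambda_B^{-N}e^{-l(x)}\hat\varphi_{x_0}\varphi_{x_N}$ for a feasible length-$N$ path $x=(x_0,\dots,x_N)$. Once both facts are in hand, the probability of a path reduces on the support to $C\,e^{-l(x)}$ with $C>0$ independent of the path, from which the statement is immediate.

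First I would apply Theorem~\ref{solbridge} with the time-homogeneous prior transition matrix $M(t)\equiv R_L$ of \eqref{opttransition}, initial measure $\mu_L$ of \eqref{invariantmeasure}, and end-marginals $\nu_0=\delta_1$, $\nu_N=\delta_n$. Its hypothesis holds: $R_L$ is $\lambda_B^{-1}B$ conjugated by the positive diagonal matrix with entries $\hat\varphi_i\varphi_i$, so $R_L^N$ is the same positive diagonal conjugation of $\lambda_B^{-N}B^N$ and hence has all positive entries since $B^N$ does by hypothesis; moreover $\mu_L$ is positive on $\mathcal X$ by Perron-Frobenius (Theorem~\ref{perrontheorem}). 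Theorem~\ref{solbridge} then supplies nonnegative potentials $\varphi(t,\cdot),\hat\varphi(t,\cdot)$ with
\[
\fM_L^*(x_0,\dots,x_N)=\delta_1(x_0)\prod_{t=0}^{N-1}(R_L)_{x_tx_{t+1}}\,\frac{\varphi(t+1,x_{t+1})}{\varphi(t,x_t)}.
\]
Next I would repeat the short telescoping computation from the proof of Proposition~\ref{invariance} (which uses only $\mu_L(x_0)>0$, not the full positivity of $\nu_0$ assumed there): the $\varphi$-ratios collapse to $\varphi(N,x_N)/\varphi(0,x_0)$, and $\prod_t(R_L)_{x_tx_{t+1}}=\fM_L(x)/\mu_L(x_0)$ by the Markov factorization of $\fM_L$, so $\fM_L^*(x)=\delta_1(x_0)\,\mu_L(x_0)^{-1}\,\varphi(0,x_0)^{-1}\varphi(N,x_N)\,\fM_L(x)$.

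Now I would pin down the support. The factor $\delta_1(x_0)$ forces $x_0=1$, and since $\fM_L^*\in\mathcal P(\delta_1,\delta_n)$ has final marginal $\delta_n$, we have $\fM_L^*(x)=0$ unless $x_N=n$; the boundary conditions $\varphi(0,1)\hat\varphi(0,1)=1$ and $\varphi(N,n)\hat\varphi(N,n)=1$ give $\varphi(0,1)>0$, $\varphi(N,n)>0$, while $\mu_L(1)=\hat\varphi_1\varphi_1>0$ and $\lambda_B>0$ by Perron-Frobenius. Hence, for the only paths carrying mass, namely those with $x_0=1$ and $x_N=n$,
\[
\fM_L^*(x)=\frac{\varphi(N,n)}{\mu_L(1)\,\varphi(0,1)}\,\fM_L(x)=\underbrace{\frac{\varphi(N,n)}{\mu_L(1)\,\varphi(0,1)}\,\lambda_B^{-N}\,\hat\varphi_1\varphi_n}_{=:C}\;e^{-l(x)},
\]
a constant $C>0$ depending only on the endpoints $1,n$ and on $N$. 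The theorem follows at once: two length-$N$ paths from $1$ to $n$ of equal cost receive equal $\fM_L^*$-mass; all remaining paths (those not joining $1$ to $n$, or using a missing edge, for which $e^{-l(x)}=0$) receive zero mass; and since $t\mapsto Ce^{-t}$ is strictly decreasing, the mass is maximal precisely on the minimum-cost paths from $1$ to $n$, all of which receive the same value.

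The argument is mostly bookkeeping, and I do not anticipate a genuine obstacle. The points that require care are: verifying that the positivity hypothesis of Theorem~\ref{solbridge} survives the passage from $B$ to $R_L$; correctly identifying the support of $\fM_L^*$ so that $\hat\varphi_1,\varphi_n,\varphi(0,1),\varphi(N,n)$ really are path-independent constants; and observing that the phrasing ``equal probability to paths of equal cost'' is to be read on the support of $\fM_L^*$, which is consistent with infeasible or non-$(1,n)$ paths carrying zero mass, since those have $e^{-l(x)}=0$ or simply lie outside the support.
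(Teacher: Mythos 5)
Your proof is correct and follows essentially the same route as the paper's: both boil down to the observation that on the support of $\fM_L^*[\delta_1,\delta_n]$ the path weight factors as a path-independent constant times $e^{-l(x)}$, after a telescoping of the $\varphi$-ratios. The only cosmetic difference is that the paper applies Theorem~\ref{solbridge} with the weighted adjacency matrix $B$ directly as the prior transition (justified implicitly by the iterated bridge property stated a paragraph earlier), obtaining $\fM_L^*(x)=\delta_1(x_0)\frac{\varphi(N,x_N)}{\varphi(0,x_0)}\prod_t b_{x_tx_{t+1}}$ in one line, whereas you keep $R_L$ as the prior transition and then re-inject $B$ via the explicit Boltzmann form of $\fM_L$; the two are equivalent up to a positive diagonal conjugation and the extra path-independent factor $\lambda_B^{-N}\hat\varphi_1\varphi_n/\mu_L(1)$ that you correctly track. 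Your added care about the positivity of $R_L^N$, the support of the solution, and the positivity of the boundary potentials is sound and actually makes a few implicit steps in the paper's shorter proof explicit.
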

\begin{proof}For a path $x=(x_0,\,x_1,\ldots,x_N)$, we have
\begin{eqnarray}\nonumber\fM_L^*[\delta_{1},\delta_{n}](x)=\delta_{1}(x_0)\frac{\varphi_v(N,x_N)}{\varphi_v(0,x_0)}\prod_{t=0}^{N-1} b_{x_t x_{t+1}}\\=\delta_{1}(x_0)\frac{\varphi_v(N,x_N)}{\varphi_v(0,x_0)}\exp[-\sum_{t=0}^{N-1} l_{x_t x_{t+1}}].
\end{eqnarray}
Observe once more that $\delta_{1}(x_0)\frac{\varphi_v(N,x_N)}{\varphi_v(0,x_0)}$ does not depend on the particular path joining $x_0$ and $x_N$. Since $\sum_{t=1}^{N-1} l_{x_t x_{t+1}}=l(x)$ is the total length of the path, the conclusion now follows. 
\end{proof}
\begin{remark}{\em 
In the discrete (OMT) problem \cite[Vol.I]{RR} introduced in Section \ref{RLPP}, one first seeks to identify the shortest path(s) $(x_0,x_1^*,\ldots,x_{N-1}^*,x_N)$ from any starting node $x_0\in\mathcal X$ to any ending node $x_N$,
\begin{equation}\label{minlength}
l_{{\rm min}}(x_0x_N)=\min_{x_1^*,\ldots,x_{N-1}^*} \sum_{t=0}^{N-1}l_{x^*_tx^*_{t+1}}.
\end{equation}
This is a {\em combinatorial problem} but can also be cast as a linear program \cite{bazaraa2011linear}. It is apparent that the computational complexity of such a problem becomes rapidly unbearable as the number of nodes $n$ and the length of the path $N$ increase.
Having a solution to this first problem, the OMT problem can then be recast as the linear program  in (\ref{DW}), where the cost of a path is its length.
Alternatively, the OMT problem can be directly cast as a linear program in as many variables as there are edges  \cite{bazaraa2011linear}. 
The transport provided by Theorem \ref{robustOMT}, which readily generalizes to any two marginals $\nu_0$ and $\nu_N$, provides an attractive alternative to the OMT approach: Minimum length paths all have maximum probability, but some of the mass is also transported on alternative paths thereby ensuring a certain amount of robustness of the transportation plan. Also notice that Theorem \ref{robustOMT} provides an alternative paradigm to find the minimum length paths through simulation! }\hfill$\Box$
\end{remark}

We conclude this subsection with a few observations on the role of the temperature parameter, referring to \cite[Section V]{CGPT2} for the proofs.
\begin{remark}\label{remtem}{\em
Consider the solution $\fM^*_{L,T}[\delta_{x_0},\delta_{x_N}]=:\fM^*_T$ to the maximum entropy problem  \[
\fM_{L,T}^*[\delta_1,\delta_n]={\rm argmin}\{ \D(P\|p_B(x;T)) \mid  P\in {\mathcal P}(\delta_1,\delta_n)\}
\]
with prior  (\ref{pathBoltdistr}) where we have emphasized the dependence on the parameter $T$.
Let $l_{{\rm min}}(x_0x_N)$ be as in (\ref{minlength}).
\begin{itemize}
\item[i)] For $T\searrow 0$, $\fM^*_T$ tends to concentrate itself on the set of feasible, minimum length paths joining $x_0$ and $x_N$ in $N$ steps. Namely, if $y=(y_0=x_0,y_1,\ldots,y_{N-1},y_N=x_N)$ is such that $l(y)>l_{{\rm min}}(x_0x_N)$, then $\fM^*_T(y)\searrow 0$ as $T\searrow 0$.
\item[ii)] For $T\nearrow+\infty$, $\fM^*_T$ tends to the uniform distribution on all feasible paths joining $x_0$ and $x_N$ in $N$ steps.
\end{itemize}
We notice that, as in the diffusion case \cite{Mik, mt, MT,leo,leo2,CGP3,CGP4,CDPS},  when the ``heat bath" temperature $T$ is close to $0$, the solution of the Schr\"{o}dinger bridge problem is close to the solution of the discrete OMT problem. Since for the former an efficient iterative algorithm is available (\ref{Iteration}), we see that also in this discrete setting the SBP provides a valuable computational approach to solving OMT problems. We illustrate this, Theorem \ref{robustOMT} and the invariance of the most probable paths below, in a simple example below.
}
\hfill$\Box$\end{remark}
\begin{example}\em
Consider again Figure \ref{fig:graph} and let $\nu_0=\delta_1$ and $\nu_N=\delta_9$. Let the time horizon for the transport $N=3$ or $N=4$. We first set the length of all edges equal to $1$ except $l_{99}=0$.  The shortest path from node $1$ to $9$ is of length $3$ and there are three such paths, which are
$1-2-7-9$, $1-3-8-9$ and $1-4-8-9$. If we want to transport the mass with a minimum number of  steps, we may end up using one of these three paths. We use the results of Section \ref{RNR} to compute a robust routing policy. Since all the three feasible paths have equal length, we get a transport plan with equal probabilities using all these three paths, regardless of the choice of temperature $T$. The evolution of mass distribution is given by
 \[
           \left[
       \begin{matrix}
       1 & 0 & 0 & 0 & 0 & 0 & 0 & 0 & 0\\
       0 & 1/3 & 1/3 & 1/3 & 0 & 0 & 0 & 0 & 0\\
       0 & 0 & 0 & 0 & 0 & 0 & 1/3 & 2/3 & 0\\
       0 & 0 & 0 & 0 & 0 & 0 & 0 & 0 & 1
       \end{matrix}
       \right],
\]
where the four rows of the matrix show the mass distribution at time step $t=0, 1, 2 ,3$ respectively, while columns correspond to vertices. As we can see, the mass spreads out first and then goes to node $9$. When we allow for more steps $N=4$, we get, for ``temperature'' $T=1$,
 \[
           \left[
       \begin{matrix}
       1 & 0 & 0 & 0 & 0 & 0 & 0 & 0 & 0\\
       0 & 0.4705 & 0.3059 & 0.2236 & 0 & 0 & 0 & 0 & 0\\
       0 & 0 & 0.0823 & 0.0823 & 0.1645 & 0 & 0.2236 & 0.4473 & 0\\
       0 & 0 & 0 & 0 & 0 & 0.0823 & 0.0823 & 0.1645 & 0.6709\\
       0 & 0 & 0 & 0 & 0 & 0 & 0 & 0 & 1
       \end{matrix}
       \right].
\]
There are $7$ feasible paths of length $4$, which are $1-2-7-9-9$, $1-3-8-9-9$, $1-4-8-9-9$, $1-2-5-6-9$, $1-2-5-7-9$, $1-3-4-8-9$ and $1-2-3-8-9$. The amounts of mass traveling along these paths are 
	\[
		0.2236, 0.2236, 0.2236, 0.0823, 0.0823, 0.0823, 0.0823,
	\]
	respectively.
The first three are the most probable paths. This is consistent with Proposition \ref{invariance} since they are the paths with minimum length. If we change the temperature $T$, the flow changes. The set of most probable paths, however, remains invariant. In particular, when $T=0.1$, the flow concentrates on the most probable set (effecting OMT-like transport), as shown below
 \[
       \left[
       \begin{matrix}
       1 & 0 & 0 & 0 & 0 & 0 & 0 & 0 & 0\\
       0 & 0.3334 & 0.3333 & 0.3333 & 0 & 0 & 0 & 0 & 0\\
       0 & 0 & 0 & 0 & 0 & 0 & 0.3334 & 0.6666 & 0\\
       0 & 0 & 0 & 0 & 0 & 0 & 0 & 0 & 1\\
       0 & 0 & 0 & 0 & 0 & 0 & 0 & 0 & 1
       \end{matrix}
       \right].
\]

Now we change the graph by setting the length of edge $(7,\,9)$ as $2$, that is, $l_{79}=2$. When $N=3$ steps are allowed to transport a unit mass from node $1$ to node $9$, the evolution of mass distribution for the optimal transport plan, for $T=1$, is given by
 \[
          \left[
       \begin{matrix}
       1 & 0 & 0 & 0 & 0 & 0 & 0 & 0 & 0\\
       0 & 0.1554 & 0.4223 & 0.4223 & 0 & 0 & 0 & 0 & 0\\
       0 & 0 & 0 & 0 & 0 & 0 & 0.1554 & 0.8446 & 0\\
       0 & 0 & 0 & 0 & 0 & 0 & 0 & 0 & 1
       \end{matrix}
       \right].
\]
The mass is transported through paths $1-2-7-9$, $1-3-8-9$ and $1-4-8-9$, but unlike the first case, the transport plan doesn't equilize probability for these three paths. Since the length of the edge $(7,\,9)$ is larger, the probability that the mass takes this path becomes smaller. The plan does, however, assign equal probability to the two paths $1-3-8-9$ and $1-4-8-9$ with minimum length; that is, these are the most probable paths. The evolutions of mass for $T=0.1$ and $T=100$ are 
\[
       \left[
       \begin{matrix}
       1 & 0 & 0 & 0 & 0 & 0 & 0 & 0 & 0\\
       0 & 0 & 1/2 & 1/2 & 0 & 0 & 0 & 0 & 0\\
       0 & 0 & 0 & 0 & 0 & 0 & 0 & 1 & 0\\
       0 & 0 & 0 & 0 & 0 & 0 & 0 & 0 & 1
       \end{matrix}
       \right]
\]
and
\[
       \left[
       \begin{matrix}
       1 & 0 & 0 & 0 & 0 & 0 & 0 & 0 & 0\\
       0 & 0.3311 & 0.3344 & 0.3344 & 0 & 0 & 0 & 0 & 0\\
       0 & 0 & 0 & 0 & 0 & 0 & 0.3311 & 0.6689 & 0\\
       0 & 0 & 0 & 0 & 0 & 0 & 0 & 0 & 1
       \end{matrix}
       \right],
\]
respectively. We observe that, when $T=100$ the flow assigns almost equal mass to the three available paths, while, when $T=0.1$ (OMT-like transport), the flow concentrates on the most probable paths $1-3-8-9$ and $1-4-8-9$. This is clearly a consequence of the properties seen in Remark \ref{remtem}.
\end{example}
\begin{remark}{\em  As in the continuous case, it is possible to transform the dynamic problems considered in this section into static ones using a decomposition for the relative entropy similar to (\ref{decomposition}). Indeed, let $P$ and $Q$ be two probability distributions on $\mathcal X^{N+1}$. For $x=(x_0,x_1,\ldots,x_N)\in\mathcal X^{N+1}$, consider the multiplicative decomposition
$$P(x)=P_{x_0,x_N}(x)p_{0N}(x_0,x_N),
$$
where
\[P_{\bar{x}_0,\bar{x}_N}(x)=P(x|x_0=\bar{x}_0,x_n=\bar{x}_N)\]
and we have assumed that the joint initial-final distribution $p_{0N}$ is everywhere positive on $\mathcal X\times \mathcal X$, and similarly  for $Q$.
We get
\begin{eqnarray}\nonumber
\D(P\|Q)&=&\sum_{x_0x_N}p_{0N}(x_0,x_N)\log \frac{p_{0N}(x_0,x_N)}{q_{0N}(x_0,x_N)}\\&+&\sum_{x\in{\cal X}^{N+1}}P_{x_0,x_N}(x)\log \frac{P_{x_0,x_N}(x)}{Q_{x_0,x_N}(x)} p_{0N}(x_0,x_N).\nonumber
\end{eqnarray}
This is the sum of two nonnegative quantities. The second becomes zero if and only if $P_{x_0,x_N}(x)=Q_{x_0,x_N}(x)$ for all $x\in{\cal X}^{N+1}$.}
\hfill$\Box$\end{remark}

Thus, expanding on the above remark, for instance, the problem 
\begin{equation}\label{Ddynamic}{\rm min}\{ \D(P\|p_B(x;T)) \mid  P\in {\mathcal P}(\nu_0,\nu_N)\}
\end{equation}
 can be reduced to
\begin{problem}\label{discretestatic}
 \begin{equation}
 {\rm minimize}\;J(p_{0N}):=\D(p_{0N}\|p_{B;0N})
 \end{equation}
 over 
 \begin{equation}\label{distrfamily}\{p_{0N} \;{\rm probability\; distribution\; on}\;\mathcal X\times\mathcal X:  \sum_{x_N}p_{0N}(\cdot,x_N)=\nu_0(\cdot), \sum_{x_0}p_{0N}(x_0\cdot)=\nu_N(\cdot)\}.
\end{equation}
\end{problem}

If $p^*_{0N}$ solves the above problem, then
\begin{equation}\label{optsolution}P^*(x)=p_{B;x_0,x_N}(x)p^*_{0N}(x_0,x_N)
\end{equation}
solves Problem (\ref{Ddynamic}). As in the continuous setting, the solution lies in the same reciprocal class of the prior. Observe, however, that, contrary to (\ref{OPTTRANSITION1}), the solution in the form (\ref{optsolution}) does not yield immediate by-product information on the new transition probabilities and on what paths the optimal mass flow selects (cf. Remark \ref{Cdynvsstatic} in the continuous setting). It is therefore less suited for many network routing applications.


\section{Closing comments}
We have reviewed some of the essential theoretical features of the Kantorovich relaxed formulation of OMT with quadratic cost and of the theory of the Schr\"odinger bridge problem, SBP.
We have tried to do this from a somewhat unusual angle, namely that of stochastic control.
We have explained that such a viewpoint opens the way to natural generalizations and important applications, particularly in physics (such as cooling), aerospace engineering (such as guidance for spacecrafts), robotics (such as controlling swarms), and many other areas.
Both problems, OMT and SBP, in their Eulerian formulation, lead to stochastic control steering problems for probability distributions. In this respect, we have highlighted the non-equivalence between Schr\"odinger's original problem and the related problems of steering for one-time densities, cf. Remark \ref{deterministicflow}. We have also attempted to clarify the relation between Yasue's action, Carlen's problem, and the fluid dynamic Problem \ref{fluidsymm} introduced in \cite{CGP4} that involves a Fisher Information Functional, see Remark \ref{carlenfisher}.  

In order to keep the paper within a reasonable length, we have avoided touching here on a large number of related topics such as non-commutative OMT, gradient flows on Wasserstein spaces, geometry of displacement and entropic interpolation, multimarginal transport, functional inequalities, mismatched transport, barycenters, connection to mean-field games, to information theory,  to stochastic mechanics, to differential games, to name but a few.

In the discrete setting, we have shown in some detail the connection between the relaxed OMT, Schr\"odinger bridges and the statistical mechanical free energy. It turns out (Section \ref{connections}) that the regularized OMT is just a discrete Schr\"odinger bridge problem equivalent to a contrained minimization of the free energy for distributions on path space. The latter may be viewed as a constrained {\em Gibbs' variational principle} recovering Schr\"odinger's  original large deviations motivation.  Moreover, the IPF-Sinkhorn algorithm is just a discrete counterpart of Fortet's algorithm (Section \ref{FIPFS}) for which Fortet had proven convergence under rather general assumptions in the much more challenging continuous case already in 1940. In Section \ref{FIPFS}, we have also proven convergence of the algorithm as a consequence of convergence in a projective metric of rays in the positive orthant of $\R^n$.

This paper has also the ambition to act as a {\em navigation chart} for the topics it discusses. We felt that this was needed as, on the one hand, probabilists considered the theory of Schr\"odinger Bridges pretty much complete by the early 1990's (see Wakolbinger's survey \cite{Wak} that was published in 1992).
Yet, as we argued at the beginning of Section \ref{anisotropic}, this is far from being true. On the other hand, some very fine analysts who contributed to the fantastic development of OMT of the past twenty or so years, have very little interest in the probabilistic motivation and the statistical physics underlying the Schr\"odinger Bridge theory in spite of the fine work of Mikami, Thieullen and L\'eonard \cite{Mik, mt, MT,leo,leo2}. Moreover, many first-class scientists working in the field have a more computational background and interest, having been driven to these problems by the effectiveness of the {\em earth mover distance} in several modern applications such as those (we quote from \cite{PC}) ``in imaging sciences (such as color or texture processing), computer
vision and graphics (for shape manipulation) or machine learning (for regression,
classification and density fitting)".  Finally, control engineers were interested from 1985 on in a special bridge problem on an infinite horizon problem, called covariance control, starting with the seminal work of Skelton and collaborators \cite{HS1,HS2,SkeIke,GS,ZGS}. The field was considered exhausted more than twenty years ago, but this turned out to be far from the case; it is currently experiencing another phase of fast development. 

On top of all of these reasons for incomprehensions and difficult communication, there is, of course, a Babelian confusion of Tongues as scientists working in this field have had a plethora of backgrounds such as in various areas of pure and applied mathematics, statistical physics, statistics, computer graphics, control, mechanical and aerospace engineering, numerical analysis, machine learning, etc.  
Nevertheless, considering the spectacular flourishing of OMT, Schr\"odinger Bridges and relative applications, one feels tempted to end the paper with a Chinese quote: ``Great is the confusion under the sky. The situation is therefore excellent.''


\end{document}